\numberwithin{equation}{section}
\theoremstyle{plain}
\newtheorem{thm}{Theorem}[section]
\newtheorem{lemma}{Lemma}[section]
\newtheorem{remark}{Remark}[section]
\begin{document}
\global\long\def\xib{\boldsymbol{\xi}}
\global\long\def\etab{\boldsymbol{\eta}}
\global\long\def\thetab{\boldsymbol{\theta}}
\global\long\def\yb{\boldsymbol{y}}
\global\long\def\xb{\boldsymbol{x}}
\global\long\def\eb{\boldsymbol{e}}
\global\long\def\0b{\boldsymbol{0}}
\global\long\def\zb{\boldsymbol{z}}
\global\long\def\zetab{\boldsymbol{\zeta}}
\global\long\def\taub{\boldsymbol{\tau}}
\global\long\def\wb{\boldsymbol{w}}
\begin{frontmatter}
\title{ On uniform consistency \\ of nonparametric tests I}
\footnote{The research has been supported by RFFI Grant  20-01-00273.}
\runtitle{ On uniform consistency }

\begin{aug}
\author{\fnms{Mikhail} \snm{Ermakov}\ead[label=e1]{erm2512@gmail.com}}

\runauthor{M. Ermakov}

\address{Institute of Problems of Mechanical Engineering RAS, Bolshoy pr., 61, VO, 1991178  St. Petersburg and
St. Petersburg State University, Universitetsky pr., 28, Petrodvoretz, 198504 St. Petersburg, RUSSIA\\
\printead{e1}}

\end{aug}

\begin{abstract}For widespread nonparametric tests we point out necessary and sufficient conditions of uniform consistency of nonparametric sets of alternatives approaching to hypothesis. Nonparametric sets of alternatives can be defined both in terms of distribution functions and in terms of densities (or signals in the problem of signal detection in Gaussian white noise). In this part of paper such conditions are provided
for   $\chi^2-$tests having increasing number of cells with growth of sample size, Cramer-von Mises tests, tests generated  $\mathbb{L}_2$- norms of kernel estimators and  tests generated quadratic forms of estimators of Fourier coefficients.
  \end{abstract}

  \begin{keyword}[class=AMS]
\kwd[Primary]\,{62F03} \kwd{62G10}  \kwd{62G20}

\end{keyword}

\begin{keyword}
\kwd{Cramer-von Mises tests}
\kwd{ chi-squared test}
\kwd{consistency}
\kwd{goodness of fit tests}
\kwd{signal detection}
\end{keyword}

\end{frontmatter}

\maketitle

\section{Introduction}

Let  $X_1,\,\ldots,\,X_n$ be sample of i.i.d.r.v.'s having c.d.f.  $F \in \Im$. Here $\Im $ is set of all distribution functions of random variables having values into interval (0,1)

We explore problem of  testing  hypothesis
\begin{equation} \label{ax1}
\mathbb{H}_0\,:\, F(x) = F_0(x)= x, \quad x \in [0,1]
\end{equation}
versus sets of alternatives defined in terms of
\vskip 0.25cm
distribution functions
\begin{equation}\label{ax2}
\mathbb{H}_n \,:\, F \in \Upsilon_n, \quad \Upsilon_n \subset \Im
\end{equation}
or in terms of densities $p(x) = 1 + f(x)  =  \frac{dF(x)}{dx}$
\begin{equation}\label{axa}
\mathbb{H}_{1n} \,:\, f \in \Psi_n, \quad \Psi_n \subset \mathbb{L}_2(0,1).
\end{equation}
Here $\mathbb{L}_2(0,1)$ is Hilbert space of all quadratically integrable functions $g(t)$, $t \in (0,1)$ with $\mathbb{L}_2$-norm $\|g\| = \left(\int g^2(t)\,d\,t\,\right)^{1/2}$.

For part of setups the problem of goodness of fit testing for distribution function or density is replaced with the problem of signal detection in Gaussian white noise. This allows to simplify technical part of paper.

We are interested in uniform consistency of nonparametric tests. If test or test statistic is uniformly consistent for sets of alternatives, we say that these sets of alternatives are uniformly consistent for these tests or test statistics.

For setups mentioned above we  point out necessary and sufficient conditions of uniform consistency of sets of alternatives (\ref{ax2}) and  (\ref{axa}) for test statistics of
\vskip 0.1cm
Kolmogorov tests;
 \vskip 0.1cm
Cramer-von Mises tests;
 \vskip 0.1cm
 chi-squared tests having increasing number of cells with growth of sample size;
 \vskip 0.1cm
 tests generated quadratic forms of estimators of Fourier coefficients of orthogonal expansion of signal;
 \vskip 0.1cm
 tests generated $\mathbb{L}_2$ --norms of kernel estimators.
\vskip 0.1cm
Last four of above mentioned tests statistics have  quadratic structure. The results and proofs for these test statistics are similar. We provide these results in first part of paper. The results about Kolmogorov tests are provided in second part of paper.

Denote $\hat F_n$ -- empirical distribution function of $X_1,\ldots,X_n$.

If sets of alternatives are defined in terms of distribution functions, necessary and sufficient conditions of consistency will be provided in the framework of distance  method.

 Test statistics can be considered as functionals $T_n(\hat F_n)$ depending on empirical distribution functions. Functionals
  $T_n(F)$  admits interpretation as norms or seminorms defined on the set of differences of distribution functions. Established uniform consistency of tests statistics on sets of alternatives
$$
\Upsilon_n(T_n,\rho_n) = \{\, F \,: \, T_n(F) > \rho_n > 0, \, F \in \Im \,\}
$$
allows to make a conclusion about uniform consistency of any sequence of sets of alternatives
 $\Upsilon_n$ in terms of their distances  or semidistances $$
\inf_{F \in \Upsilon_n} T_n(F)$$ from hypothesis.

For specially selected sequences  $\rho_n$, $\rho_n \to 0$  as $n \to \infty$, in papers \cite{er97, er04, er03} (see Theorems \ref{chi2}, \ref{tq2}, \ref{tk2} as well) we established uniform consistency of sets $\Upsilon_n(T_n,\rho_n)$ of alternatives for $\chi^2-$tests having increasing number of cells with growth of sample size, tests generated  $\mathbb{L}_2$- norms of kernel estimators and tests generated quadratic forms of estimators of Fourier coefficients
Moreover asymptotic minimaxity of tests on these sets has been established.  In this part of paper we establish uniform consistency of sets  $\Upsilon_n(T,\rho_n)$ of alternatives for Cramer - von Mises test (see  Theorem \ref{tcm}.
Some similar results will be established  for Kolmogorov test in the second part of paper.

Proof of results on uniform consistency of sets of alternatives (\ref{axa}) defined in terms of densities or signals are based on these results.

Problem of signal detection is considered for the following setup. We observe a realization of random process
 $Y_n(t)$ defined stochastic differential equation
\begin{equation}\label{q1}
d\,Y_n(t) = f(t)\, dt + \frac{\sigma}{\sqrt{n}}\, d\,w(t), \quad t \in [0,1],\quad \sigma >0,
\end{equation}
where  $f \in \mathbb{L}_2(0,1)$ is unknown signal and  $dw(t)$ is Gaussian white noise.

  The following nonparametric sets of alternatives (see \cite{dal, er90,er04, ing87,ing02,ing12,la,lep}) are often explored
\begin{equation}\label{i26}
\mathbb{H}_n\, : \, f \in V_n = \{\, f \,: \, \|f\|^2 \ge \rho_n,\, f \in U \subset \mathbb{L}_2(0,1)\, \},
\end{equation}
where $\rho_n \to 0$ as $n \to \infty$. Here  $U$  is a convex set.

We answer on  four questions given bellow. The answer on the first question is provided for problem of signal detection in Gaussian white boise and  does not touch test statistics mentioned above.
\vskip 0.2cm
{ \sl For which bounded convex sets $U$ there are $\rho_n \to 0$ as $n \to \infty$ such that there is uniformly consistent sequence of tests for sets $V_n$ of alternatives ? }
\vskip 0.2cm
We show that uniformly consistent test exists, if and only if,  set $U$ is relatively compact (see Theorems \ref{tqq} and \ref{tqq1}). Note that  necessary and sufficient condition of existence of consistent nonparametric estimator on nonparametric set  is   relative compactness of this set \cite{ih}, \cite{jo}. The same compactness condition arises in solution of ill-posed inverse problems with deterministic errors  \cite{en}. The problem of existence of consistent tests has been explored for different setups. The most complete bibliography one can find in \cite{er15}.

The answer on the next three questions is provided for i.i.d.r.v.'s model in the case of Cramer-von Mises tests and chi-squared tests. Test statistics generated quadratic forms of estimators of Fourier coefficients or tests generated  $\mathbb{L}_2$- norms of kernel estimators are explored for problem of signal detection in Gaussian white noise.
\vskip 0.2cm
{ \sl Let $\rho_n = n^{-r}$, $0 < r \le 1/2$, and  $r$ is fixed. How to define  biggest bounded sets $U$  such that sets $V_n$ are uniformly consistent for one of above mentioned test statistics  ?}
\vskip 0.2cm
We call such sets $U$-- maxisets The exact definition of maxisets is provided in section
 \ref{sec2}. For $0< r <1/2$, for test statistics having quadratic structure we show  (see Theorems \ref{tq1}, \ref{tk3}, \ref{tchi3}, \ref{tom1}), that maxisets are bodies in Besov spaces $\mathbb{B}^s_{2\infty}(P_0)$, $P_0  > 0$. Here $ r = \frac{2s}{1 + 4s}$ for chi-squared test statistics, test statistics being  $\mathbb{L}_2$- norms of kernel estimator and test statistics being quadratic forms of estimators of Fourier coefficients of signal. For Cramer- von Mises tests we have  $r = \frac{s}{2+2s}$.

 If $ r = 1/2 $, we could not find sets satisfying all  requirements of the definition of maxisets. However, we show that bounded convex sets of functions having a fixed finite  number of nonzero Fourier coefficients  satisfy similar requirements.  In further statements of this section for $ r = 1/2 $, and therefore in the corresponding theorems, the maxisets can be replaced with such sets.

Uniform consistency of chi-squared tests and Cramer-von Mises tests for above mentioned Besov bodies has been established Ingster \cite{ing87}.

For  nonparametric estimation the notion of maxisets has been introduced Kerkyacharian and Picard \cite{ker93}. Maxisets of  nonparametric  estimators have been comprehensively explored in \cite{co},   \cite{ker02},  \cite{rio} (see also references therein). For nonparametric hypothesis testing completely different definition of maxisets has been introduced  Autin, Clausel,  Freyermuth and  Marteau \cite{au}.

 Let each set  $\Psi_n$ be bounded in $\mathbb{L}_2(0,1)$. Then Cramer- von Mises tests, chi-squared tests, tests generated $\mathbb{L}_2$-norms of kernel estimators and quadratic forms of estimators of Fourier coefficients of signal are uniformly consistent, if and only if,  these sets $\Psi_n$ of alternatives does not contain inconsistent sequence of simple alternatives $f_n \in \Psi_n$. In other words sets of alternatives are uniformly consistent, if and only if, all sequences of simple alternatives $f_n \in \Psi_n$ are consistent. Thus the problem of uniform consistency for sets  $\Psi_n$ of alternatives is reduced to the problem of consistency of any sequence of simple alternatives   $f_n \in \Psi_n$.
\vskip 0.2cm
{\sl How to describe all consistent and inconsistent sequences of simple alternatives having given rate of convergence to hypothesis ?}
\vskip 0.2cm
   We explore this problem as problem of testing hypothesis
   \begin{equation}\label{i29}
\mathbb{\bar H}_0\,:\, f(x)  = 0, \quad x\in [0,1],
\end{equation}
    versus sequence of simple alternatives
 \begin{equation}\label{i30}
\mathbb{\bar H}_n\,:\, f = f_n, \qquad c\,n^{-r} \le \| f_n\| \le C\, n^{-r},
\end{equation}
where $0 < r \le 1/2$ and $0 < c < C <\infty$.

For above mentioned test statistics  answer on this question is provided in terms of concentration of Fourier coefficients (Theorems \ref{tq3} and \ref{tq4}). In Theorem  \ref{tq7}  we propose the following interpretation  of  these results:
\vskip 0.2cm
  {\it sequence of simple alternatives  $f_n$, $c\,n^{-r} \le \| f_n\| \le C\, n^{-r}$, is consistent, if and only if, functions $f_n$    admit representation as
functions $f_{1n}$ from maxiset with the same rate of convergence to hypothesis plus functions $f_{n} - f_{1n}$ orthogonal to functions $f_{1n}$.}
\vskip 0.2cm
  In Theorem   \ref{tq11} we show that, for any $\varepsilon > 0$, there are maxiset and functions $f_{1n}$  from maxiset such that the differences of type II error probabilities for alternatives $f_{n}$ and $f_{1n}$ is smaller   $\varepsilon $ and  $f_{1n}$ is orthogonal to $f_{n} - f_{1n}$.

    Thus, each  function of consistent sequence of alternatives  with fixed rate of convergence to hypothesis  contains sufficiently smooth function  as an additive component and this function carries almost all information on its type II error probability.
\vskip 0.2cm
  {\sl What can we say about properties of consistent and inconsistent sequences of alternatives having fixed rate of convergence to hypothesis in $\mathbb{L}_2$- norm?}
\vskip 0.2cm
 In Theorem \ref{tq5} we establish that asymptotic of type II error probabilities of sums of alternatives from consistent and inconsistent sequences coincides with the asymptotic for consistent sequence.

We call sequence of alternatives $f_n$ purely consistent if there does not exist inconsistent sequence of alternatives $f_{2n}$ having the same rates of convergence to hypothesis and such that $f_{2n}$ are orthogonal to $f_n  - f_{2n}$.

It is easy to  show that any sequence of alternatives from maxisets with fixed rates of convergence to hypothesis is purely consistent.

In Theorem \ref{tq6}, in terms of concentration of Fourier coefficients we point out  analytic assignment of purely consistent sequences of alternatives.

 In Theorem \ref{tq12} we show that, for any $\varepsilon>0$, for any purely consistent sequence of alternatives $f_n $, $cn^{-r} \le \| f_n\| \le C n^{-r}$,  there are maxiset and some sequence  $f_{1n}$ from this maxiset, such that there holds $\|f_n - f_{1n}\| \le \varepsilon n^{-r}$.
\vskip 0.15cm
 Paper is organized as follows. In section \ref{sec2} we introduce main definitions. In section \ref{sec3},  the answer on the first question is provided. In sections  \ref{sec4}, \ref{sec5}, \ref{sec6} and \ref{sec7}, for $0 < r <1/2$,  above mentioned  results are established respectively for  test statistics based on quadratic forms of estimators of Fourier coefficients,  $\mathbb{L}_2$ -- norms of kernel estimators, $\chi^2$--tests and Cramer-- von Mises tests.
   In section \ref{sec8} we focus on the case $r = \frac{1}{2}$.

    Proof of all Theorems is provided in Appendix.

We use letters $c$ and $C$ as a generic notation for positive constants. Denote ${\bf 1}_{\{A\}}$ the
indicator of an event $A$.  Denote $[a]$ whole part of real number $a$. For any two sequences of positive real numbers $a_n$ and $b_n$,  $a_n \asymp b_n$ implies $c < a_n/b_n < C$ for all $n$ and $a_n = o(b_n)$ implies $a_n/b_n \to 0$ as $n \to \infty$. For any complex number $z$ denote $\bar z$ complex conjugate number.

Denote
  $$
  \Phi(x) = \frac{1}{\sqrt{2\pi}}\,\int_{-\infty}^x\,\exp\{-t^2/2\}\, d\,t, \quad x \in \mathbb{R}^1,
$$
standard normal distribution function.

Let $\phi_j$, $1 \le j < \infty$, be  orthonormal system of functions in $\mathbb{L}_2(0,1)$. For each  $P_0 > 0$ define set
\begin{equation}\label{vv}
\mathbb{\bar B}^s_{2\infty}(P_0) = \Bigl\{\,f : f = \sum_{j=1}^\infty\,\theta_j\phi_j,\,\,\,  \sup_{\lambda>0} \lambda^{2s}\, \sum_{j>\lambda}\, \theta_j^2 \le P_0,\,\, \theta_j \in \mathbb{R}^1\, \Bigr\}.
\end{equation}
If some assumptions about basis $\phi_j$, $1 \le j < \infty,$  holds, functional space
$$
\bar{\mathbb{ B}}^s_{2\infty} = \Bigl\{\, f : f = \sum_{j=1}^\infty\,\theta_j\phi_j,\,\,\,  \sup_{\lambda>0} \lambda^{2s}\, \sum_{j>\lambda}\, \theta_j^2 < \infty,\,\, \theta_j \in \mathbb{R}^1\, \Bigr\}
$$
is Besov space $\mathbb{B}^s_{2\infty}$ (see   \cite{rio}).
In particular, $\mathbb{\bar B}^s_{2\infty}$ is Besov space if $\phi_j$, $1 \le j < \infty$, is trygonometric basis.

If $\phi_j(t) = \exp\{2\pi i j x\}$, $x\in (0,1)$, $j = 0, \pm 1, \ldots$,  denote
$$
\mathbb{ B}^s_{2\infty}(P_0) = \Bigl\{\,f : f = \sum_{j=-\infty}^\infty\\, \theta_j\,\phi_j,\,\,\,  \sup_{\lambda>0}\, \lambda^{2s}\, \sum_{|j| >\lambda} |\theta_j|^2 \le P_0\, \Bigr\}.
$$
Since here  $\phi_j$ are complex functions, then $\theta_j$ are complex numbers as well and $\theta_j =  \bar\theta_{-j}$ for all $-\infty < j < \infty$.

For the same basis denote
$$
\mathbb{\tilde B}^s_{2\infty}(P_0) = \Bigl\{\,f : f = \sum_{j=-\infty}^\infty\, \theta_j\,\phi_j,\,\,f \in \mathbb{ B}^s_{2\infty}(P_0),\,  \theta_0 =0\, \Bigr\}.
$$
Balls in Nikolskii classes
$$
\int\,(f^{(l)}(x+t) - f^{(l)}(x))^2\, d\,x \le L |t|^{2(s-l)}, \quad \|f\| < C,
$$
are Besov balls in  $\mathbb{B}^s_{2\infty}$. Here $l = [s]$.
\section{Main definitions \label{sec2}}
\subsection{ Consistency and $n^{-r}$-consistency \label{ss2.1}}
For any test $K_n$  denote $\alpha(K_n)$ its type I error probability, and $\beta(K_n,f)$ its type II error probability for  alternative $f \in \mathbb{L}_2(0,1)$. Similar  notation  $\beta(K_n,F)$ is implemented if alternative is c.d.f.  $F$.

Definition of consistency  will be slightly different in each  section. In section \ref{sec3}  problem of existence of uniformly consistent tests and uniform consistency of sets of alternatives is considered among all tests.

In section \ref{sec4} consistency is considered for a fixed sequence of test statistics $T_n$. For kernel-based tests and chi-squared tests, consistency is explored for whole population of test statistics depending on kernel width and number of cells respectively. In section \ref{sec7} we have only one test statistic.

We showed that problem of uniform consistency of sets of alternatives is reduced to the problem of consistency of sequences of simple alternatives. Thus, in sections \ref{sec4} - \ref{sec7}, we explore this setup.

Below we provide  definition of consistency for setup of sections \ref{sec4}  and \ref{sec7}.  In sections \ref{sec5}  and \ref{sec6} the definitions will be different in the sense mentioned above.

We say that sequence of simple alternatives $f_n$ is {\sl consistent}  if for any $\alpha$, $0 < \alpha < 1$, for sequence of tests $K_n$, $\alpha(K_n) = \alpha\,(1 + o(1))$, generated test statistics $T_n$, there holds
\begin{equation}\label{vas1}
\limsup_{n\to\infty}  \beta(K_n, f_n) < 1 - \alpha.
\end{equation}
If $cn^{-r} < \|f_n\| < Cn^{-r}$ additionally, we say that sequence of alternatives $f_n$ is $n^{-r}$- {\sl consistent} (see  \cite{ts}).

We say that sequence of alternatives $f_n$ is {\sl inconsistent}  if, for each sequence of tests $K_n$ generated test statistics $T_n$, there holds
\begin{equation}\label{vas25}
\liminf_{n\to\infty} (\alpha(K_n) + \beta(K_n, f_n)) \ge 1.
\end{equation}
Suppose we consider problem of testing hypothesis  (\ref{ax1}) versus alternatives (\ref{axa}) where $\Psi_n$ can be also sets of signals.

For tests $K_n$, $\alpha(K_n) = \alpha + o(1)$, $0 < \alpha <1$, generated test statistics $T_n$ denote
$\beta(K_n,\Psi_n) = \sup_{f \in \Psi_n}\, \beta(K_n,f)$.
We say that sequence of  sets $\Psi_n$ of alternatives is uniformly consistent if
\begin{equation}\label{uc1}
\limsup_{n \to \infty}\, \beta(K_n,\Psi_n) < 1 - \alpha.
\end{equation}
For sets of alternatives $\Upsilon_n$ defined (\ref{ax2}) definition of uniform consistency is the same.
 \subsection{Purely consistent sequences}
We say that $n^{-r}$- consistent sequence of alternatives  $f_n$ is {\sl purely $n^{-r}$-consistent} if there does not exist subsequence $f_{n_i}$ such that $f_{n_i} = f_{1n_i} + f_{2n_i}$ where  $f_{2n_i}$ is orthogonal to  $f_{1n_i}$ and sequence $f_{2n_i}$, $\|f_{2n_i}\| > c_1n^{-r}$, is inconsistent.
\subsection{Maxisets \label{ss2.3}}
Let $\phi_j$, $1 \le j < \infty$, be orthonormal basis in $\mathbb{L}_2(0,1)$. We say that a set $U$, $U \subset \mathbb{L}_2(0,1)$, is ortho-symmetric with respect to this basis if $f = \sum_{j=1}^\infty \theta_j \phi_j \in U$ implies $\tilde f = \sum_{j=1}^\infty \tilde\theta_j \phi_j \in U$ for any $\tilde\theta_j = \theta_j$ or $\tilde\theta_j = -\theta_j$, $j=1,2,\ldots$.

For closed ortho --symmetric bounded convex set $U$, $U \subset \mathbb{L}_2(0,1)$, denote $\Xi$ functional space with unite ball $U$.

For the problem of signal detection we call bounded ortho-symmetric closed  set $U$, $ U \subset \mathbb{L}_2(0,1)$,  {\sl maxiset} and functional space $\Xi$ maxispace if
\vskip 0.15cm

{\sl i.} any subsequence of alternatives $f_{n_i} \in \gamma\,U$, $cn_i^{-r} < \|f_{n_i}\| < Cn_i^{-r}$, $n_i \to \infty$ as $i \to \infty$, is consistent,
\vskip 0.25cm
{\sl ii.} if $f \notin \Xi$, then, in any convex, ortho-symmetric set $V$ that contains  $f$, there is inconsistent subsequence  of alternatives $f_{n_i} \in V$, $cn_i^{-r} < \|f_{n_i}\| < Cn_i^{-r}$, where $n_i \to \infty$ as $i \to \infty$.
\vskip 0.15cm
{\sl ii.} implies that $U$ is the largest set satisfying {\sl i.}

For problem of hypothesis testing on a density, in definition of maxiset we make additional assumption:
\vskip 0.15cm
{\sl ii.} is considered only for   functions $f = 1 + \sum_{i=1}^\infty \theta_i \phi_i$ (or $f = 1 + \sum_{|i| \ge 1}^\infty \theta_i \phi_i$) satisfying the following condition.
 \vskip 0.15cm
 {\bf D.} There is $l_0 = l_0(f)$ such that, for all $l > l_0$,  functions $1 + \sum_{|i| >l}^\infty \theta_i \phi_j$   are nonnegative (are densities).

 D allows to analyze tails $f_{n_j} = \sum_{|i| \ge j} \theta_i \phi_i$ of orthogonal expansions of $f$ to establish {\sl ii.}

 It is clear that if, $U$ is maxiset, then $\gamma U$, $0 < \gamma < \infty$, is maxiset as well.

 Simultaneous assumptions of convexity and ortho-symmetry of set $V$ is rather strong. If $f \in V$, $f = \sum_{i=1}^\infty \theta_i \phi_i$, then any $f_\eta \in V$ with  $f_\eta = \sum_{i=1}^\infty \eta_i \phi_i$,  $|\eta_i| < |\theta_i|$, $1 \le i < \infty$.

Test statistics of tests generated  $\mathbb{L}_2$- norms of kernel estimators and   Cramer-von Mises tests  admit representation as a linear combination of squares of estimators of Fourier  coefficients. Therefore, for  these test statistics, consistency of sequence $f_n $ implies consistency of any sequence of ortho-symmetric functions $\tilde f_n$ generated $f_n$. Moreover, type II error probabilities of sequences $f_n$ and $\tilde f_{n}$ have the same asymptotic. Thus  the requirement of ortho-symmetry seems natural for test statistics admitting representation as a liner combination of squares of estimators of Fourier  coefficients.
For chi-squared tests, by Theorem \ref{tchi3} given in what follows, similar situation takes place.
\subsection{Another approach to definition of maxisets \label{ss2.5}}
Requirement of ortho-symmetry of set $U$ does not allow to call maxiset any convex set $W$ generated equivalent norm in $\Xi$. In definition of maxiset given below we do not make such an assumption.

Let $\Xi \subset \mathbb{L}_2(0,1)$ be Banach space with a norm $\|\cdot\|_\Xi$. Denote $ U=\{f:\, \|f\|_\Xi \le \gamma,\, f \in \Xi\}$, $\gamma > 0,$  a ball in $\Xi$.

Define subspaces $\Pi_k$, $1 \le k < \infty$, by induction.

Denote $d_1= \max\{\|f\|,\, f \in U\}$ and denote $e_1$ function $e_1 \in U$ such that $\|e_1\|= d_1.$ Denote $\Pi_1$ linear subspace generated vector $e_1$.

For $i=2,3,\ldots$ denote
$d_i = \max\{\rho(f,\Pi_{i-1}), f \in U \}$ with $\rho(f,\Pi_{i-1})=\min\{\|f-g\|, g \in \Pi_{i-1} \}$. Define function $e_i$, $e_i \in U$, such that $\rho(e_i,\Pi_{i-1}) = d_i$.
Denote $\Pi_i$ linear subspace generated functions $e_1,\ldots,e_i$.

 For any function $f \in \mathbb{L}_2(0,1)$ denote
$f_{\Pi_i}$ projection of function $f$ on subspace $\Pi_i$ and denote $\tilde f_i = f - f_{\Pi_i}$.

Thus we associate with each $f \in \mathbb{L}_2(0,1)$ sequence of functions $\tilde f_i$, $\tilde f_i \to 0$ as $i \to \infty$.

For the problem of signal detection we say that  set $U$ is {\sl maxiset} for test statistics $T_n$ and $\Xi$ is {\sl maxispace} if the following two statements take place.
\vskip 0.3cm
{\sl i.} any subsequence of alternatives $f_{n_j} \in U$, $cn_j^{-r} < \|f_{n_j}\| < Cn_j^{-r}$, $n_j \to \infty$ as $j \to \infty$, is consistent,.
\vskip 0.3cm
{\sl ii.}  for any  $f \in \mathbb{L}_2(0,1)$, $f \notin \Xi$,  there are sequences  $i_n$ and  $j_{i_n}$ with $i_n \to \infty$, $j_{i_n} \to \infty$ as $n \to \infty$,   such that subsequence $\tilde f_{i_n}$ is inconsistent  and $c j_{i_n}^{-r}<\|\tilde f_{i_n}\| < C j_{i_n}^{-r}$.
\vskip 0.3cm
For problem of hypothesis testing on a density, {\sl ii.} is verified  only for functions $f$ such that $1+ \tilde f_{i}$ are densities for all $i > i_0$.

  We provide proofs of Theorems for definition of maxisets in terms of  subsection \ref{ss2.3}. However it is easy to see that slight modification of this reasoning provide proofs for definition of subsection \ref{ss2.5} as well. Basis $\phi_j$, $1 \le j < \infty$, in subsection \ref{ss2.3} coincides  in this reasoning with basis $e_j$.
  \section{Necessary and sufficient conditions of uniform consistency \label{sec3}}
   We  consider  problem of signal detection in Gaussian white noise discussed in Introduction. Problem is explored in terms of sequence model.

Stochastic differential equation (\ref{q1}) can be rewritten  in terms of  a sequence model based on orthonormal system of functions $\phi_j$, $1 \le j < \infty$, in the following form
\begin{equation}\label{q2}
y_j = \theta_j + \frac{\sigma}{\sqrt{n}} \xi_j, \quad 1 \le j < \infty,
\end{equation}
where $$y_j = \int_0^1 \phi_j dY_n(t), \quad \xi_j = \int_0^1\,\phi_j\,dw(t) \quad \mbox{ and}  \quad \theta_j = \int_0^1 f\,\phi_j\,dt.$$ Denote $\yb =  \{y_j\}_{j=1}^\infty$ and $\thetab = \{\theta_j\}_{j=1}^\infty$.

 We can consider $\thetab$ as a vector in Hilbert space $\mathbb{H}$ with the norm $\|\thetab\| = \Bigl(\sum_{j=1}^\infty \theta_j^2\Bigr)^{1/2}$. We implement the same notation $\| \cdot \|$ in $\mathbb{L}_2$ and in $\mathbb{H}$. Sense of this notation will be always clear from context.

In this notation the problem of hypothesis testing can be rewritten in the following form.
One needs to test the hypothesis
 \begin{equation}\label{sh31}
\mathbb{H}_0 : \thetab = \0b
\end{equation}
 versus alternatives
 \begin{equation}\label{sh32}
 \mathbb{H}_n : \thetab \in V_n =\{\, \thetab : \|\thetab\| \ge \rho_n,\, \thetab \in U,\, U \subset \mathbb{H}\,\}.
\end{equation}
Here $ U $ is bounded convex set.

We say that $\0b = \{0,0,\ldots\}$  is inner point of set $U$ if for any $\yb \in \mathbb{H}$ there is $\lambda > 0$ such that $\lambda \yb \in U$ and $-\lambda \yb \in U$.

 \begin{thm} \label{tqq} Suppose that bounded set U is  convex and $\0b$ is inner point of $U$. Then  there is  sequence $\rho_n  \to 0$ as $n \to \infty$ such that  there is uniformly consistent sequence of tests  for  sets of alternatives $V_n$ with this sequence $\rho_n$,  if and only if, set $U$ is  relatively compact.
 \end{thm}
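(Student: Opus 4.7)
The plan is to prove both directions separately, using the Kolmogorov-width construction from Subsection~\ref{ss2.5} for sufficiency and a Le Cam--Ingster bound for necessity.

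For the sufficient direction, I will use that relative compactness of $U$ in $\mathbb{H}$ is equivalent to $d_i \to 0$, so that for each $\varepsilon > 0$ some $k(\varepsilon)$ satisfies $\sup_{\thetab \in U} \|\thetab - P_{\Pi_{k(\varepsilon)}}\thetab\| < \varepsilon$. Take $\varepsilon_n \to 0$ slowly enough that $k_n := k(\varepsilon_n)$ obeys $\sigma^2 \sqrt{k_n}/n \to 0$, and choose $\rho_n \to 0$ with $\rho_n^2 \gg \varepsilon_n^2 + \sigma^2\sqrt{k_n}/n$ and $\rho_n \gg \sigma/\sqrt{n}$. Expanding the observation in the basis $\{e_j\}$ of Subsection~\ref{ss2.5}, the statistic $T_n = \sum_{j=1}^{k_n}(y_j^2 - \sigma^2/n)$ has null mean zero and standard deviation $O(\sigma^2\sqrt{k_n}/n)$, while for every $\thetab \in V_n$ its mean equals $\|P_{\Pi_{k_n}}\thetab\|^2 \ge \rho_n^2 - \varepsilon_n^2$. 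Chebyshev's inequality applied to the test ``reject when $T_n > \rho_n^2/2$'' then yields uniform consistency.

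For the necessary direction, the crucial preliminary step is to promote the algebraic-interior (absorbing) hypothesis on $\0b$ to topological interior. Because $U$ is convex, bounded and absorbing in the Banach space $\mathbb{H}$, the identity $\mathbb{H} = \bigcup_{k\ge 1} k\bar U$ and Baire's theorem force some $k_0\bar U$ to have nonempty interior; pairing an interior point $u_0$ of $U$ with $-\lambda u_0 \in U$ (from absorbing) and taking the convex combination $\mu B(u_0,r) + (1-\mu)(-\lambda u_0)$ with $\mu = \lambda/(1+\lambda)$ yields a $\|\cdot\|$-ball $B(\0b, c) \subset U$ for some $c > 0$. Once this is in hand, fix any $\rho_n \to 0$, set $s_n = \lceil n^2\rho_n^4/\sigma^4\rceil$, pick any set $S_n$ of $s_n$ coordinates in an orthonormal basis $\{\phi_j\}$, and consider the prior $\pi_n$ generating $\thetab_\epsilon = (\rho_n/\sqrt{s_n})\sum_{j \in S_n}\epsilon_j\phi_j$ with iid uniform signs $\epsilon_j \in \{\pm 1\}$. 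For $n$ large enough that $\rho_n < c$, every realization satisfies $\|\thetab_\epsilon\| = \rho_n < c$ and hence lies in $V_n$. The standard Gaussian computation gives
\[
\mathbb{E}_0\bigl[dP_{\pi_n}/dP_0\bigr]^2 = \cosh\!\bigl(n\rho_n^2/(s_n\sigma^2)\bigr)^{s_n} \le \exp\!\bigl(n^2\rho_n^4/(2s_n\sigma^4)\bigr) \le e^{1/2},
\]
so the $\chi^2$-divergence between the mixture and $P_0$ stays bounded. Le Cam's lemma then forces the Bayes risk relative to $\pi_n$ to be bounded away from zero, precluding any uniformly consistent sequence of tests for this $\rho_n$.

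The hard part will be the Baire-category argument upgrading ``absorbing'' to ``topologically interior''. Without a full $\mathbb{H}$-ball inside $U$ the $U$-membership of the Ingster signals would clash with the $\chi^2$-budget $s_n \gtrsim n^2\rho_n^4$ whenever $\rho_n$ decays slowly; once $B(\0b, c) \subset U$ is in place, both implications follow cleanly from standard $\chi^2$-type testing and $\chi^2$-divergence bounds.
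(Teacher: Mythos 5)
Your sufficiency argument is essentially fine (the paper disposes of that direction even more quickly, by invoking the existence of consistent estimators on relatively compact sets and using the norm of such an estimator as test statistic). The necessity half, however, has a genuine gap at its lynchpin. Baire's theorem applied to $\mathbb{H}=\bigcup_{k}k\bar U$ gives a ball around $\0b$ inside the \emph{closure} $\bar U$, not inside $U$: a bounded, convex, symmetric, absorbing set can have empty interior. For instance, take $\ell$ a discontinuous linear functional on $\mathbb{H}$ and $U=\{x:\,|\ell(x)|+\|x\|\le 1\}$; this set is bounded, convex, symmetric and absorbing, yet a ball $B(\0b,c)\subset U$ would force $\ell$ to be bounded. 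Consequently your sign-vector signals $\thetab_\epsilon$ are only guaranteed to lie in $\bar U$, so the prior $\pi_n$ need not be supported on $V_n$ and the Le Cam bound does not apply to the testing problem at hand. One can patch this by replacing each $\thetab_\epsilon$ by a point of $U$ of norm at least $\rho_n$ within $\delta_n=o(\sigma^2/(n\rho_n))$ of it and tracking the perturbation in the $\chi^2$ computation, but the inclusion you assert is false as stated. A second, smaller, problem: with $s_n=\lceil n^2\rho_n^4/\sigma^4\rceil$ the $\chi^2$-divergence is merely bounded (by $e^{1/2}-1$), so you only get $\inf_K(\alpha(K)+\beta(K,\pi_n))$ bounded away from zero; that is compatible with $\limsup_n\beta(K_n,V_n)<1-\alpha$, i.e.\ with uniform consistency in the sense (\ref{uc1}) used in the theorem. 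To contradict (\ref{uc1}) directly you need the divergence to vanish, e.g.\ $s_n\gg n^2\rho_n^4/\sigma^4$ (available, since the ball contains arbitrarily many orthogonal directions), or you must explicitly invoke the boosting equivalence mentioned in the paper's remark on the $\alpha\to0$, $\beta\to 0$ definition.

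Beyond these repairs, note that your necessity argument never uses the assumed failure of relative compactness: if it worked, it would show that under the absorbing hypothesis on $\0b$ no uniformly consistent test exists at all in the infinite-dimensional sequence model, so the ``only if'' direction would hold only because both sides of the equivalence are then false (your own Baire step shows the absorbing hypothesis rules out relative compactness of $U$ in infinite dimensions). The paper's proof takes a genuinely different and non-degenerate route: assuming uniformly consistent tests exist, it builds orthogonal ``width'' vectors $\eb_i\in U$, uses Theorem 5.3 of \cite{er15} to force $d_i=\|\eb_i\|\to 0$, deduces total boundedness of $U$ by the covering argument of Lemma \ref{lqq} (based on Lemma \ref{lqq1}) in the center-symmetric case, and then removes the symmetry assumption via trimmed symmetrization and a convex-hull/angle argument; the inner-point hypothesis enters only through finite-dimensional relative balls. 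That argument retains content under the weaker, finite-dimensional reading of the inner-point condition, whereas yours collapses without the full Baire upgrade.
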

 If set $U$ is  relatively compact, there  is consistent estimator (see \cite{ih}  and \cite{jo}). Therefore we can choose $\mathbb{L}_2$-norm of consistent   estimator as uniformly consistent test statistics.

 \begin{remark} Suppose $K$ is convex hull of points $\thetab_1, \thetab_2, \ldots$ and $\0b$ is inner point of $K$. Suppose $K$ is not relatively compact and $K \subset U$ where the set $U$ is not necessarily convex. Then, by  Theorem \ref{tqq}, for problem of testing hypothesis (\ref{sh31}) versus alternatives (\ref{sh32}), there does not exist uniformly consistent tests for all sequences $\rho_n  \to 0$ as $n \to \infty$.
 \end{remark}

 Version of Theorem \ref{tqq} holds for problem of testing hypothesis on a density in a following setup. Let $\mathbf{P}$ be probability measure on $\sigma$-algebra $\Im$ defined on  set $D$. Denote $\mathbb{L}_2(\mathbf{P})$ set of measurable functions $f:\,D \to \mathbb{R}^1$ such that
 \begin{equation*}
 \int_S f^2 \, d\,\mathbf{P} < \infty.
 \end{equation*}
 Let
 $X_1,\ldots,X_n$ be i.i.d.r.v.'s having probability measure $\mathbf{Q}$, having density $q = \frac{d\mathbf{Q}}{d\mathbf{P}}$ such that $q \in \mathbb{L}_2(\mathbf{P})$.

 Problem is to test hypothesis $\mathbb{H}_0\,:\, q(s) =  1$ for all $s \in D$ versus alternative $\mathbb{H}_1\,:\, q(s) -  1 \in V_n= \{\, f : \|f\| \ge \rho_n,\, f \in U,\, U \subset \mathbb{L}_2(\mathbf{P})\,\}$. Here $\|f\|$ denotes $\mathbb{L}_2(\mathbf{P})$- norm of function $f$. and $U$  is bounded convex set in $\mathbb{L}_2(\mathbf{P})$

 Define function $\0b(s) = 0$ for all $s \in D$.

 \begin{thm} \label{tqqd} Suppose that  set U is  bounded convex into $\mathbb{L}_2(\mathbf{P})$. Let set $U$ be such that for any function $f \in U$ function $1+f$ is probability density. Let
 $\0b$ be inner point of $U$. Then  there is  sequence $\rho_n  \to 0$ as $n \to \infty$ such that there is uniformly consistent sequence of tests  for  sets of alternatives $V_n$ with this sequence $\rho_n$,  if and only if, set $U$ is  relatively compact.
 \end{thm}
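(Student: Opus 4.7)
Theorem \ref{tqqd} is the i.i.d.\ density-model analog of Theorem \ref{tqq}, and I would prove it by mirroring the two directions of the Gaussian-white-noise version, replacing Gaussian shifts by product densities and the Gaussian $\chi^2$ identity by its i.i.d.\ product counterpart.

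\textbf{Sufficiency.} If $U$ is relatively compact then, exactly as in the remark following Theorem \ref{tqq} for the Gaussian case, the results of \cite{ih, jo} provide an $\mathbb{L}_2(\mathbf{P})$-consistent density estimator $\hat q_n$ with
\[
r_n := \sup_{f\in U}\bigl(\mathbf{E}_{1+f}\|\hat q_n - (1+f)\|^2\bigr)^{1/2} \to 0.
\]
Choosing any $\rho_n\to 0$ with $r_n=o(\rho_n)$ and using the test $K_n=\mathbf{1}\{\|\hat q_n-1\|>\rho_n/2\}$, Chebyshev's inequality yields $\alpha(K_n)\to 0$, and the triangle bound $\|\hat q_n-1\|\ge\|f\|-\|\hat q_n-(1+f)\|\ge\rho_n-o_{\mathbf{P}}(\rho_n)$ yields $\sup_{f\in V_n}\beta(K_n,f)\to 0$.

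\textbf{Necessity.} Assume $U$ is not relatively compact and fix an arbitrary $\rho_n\to 0$; I would exhibit a uniform prior on $V_n$ whose mixture cannot be distinguished from $\mathbf{P}^{\otimes n}$. Non-compactness supplies $\varepsilon_0>0$ and $g_1,g_2,\ldots\in U$ with pairwise $\mathbb{L}_2(\mathbf{P})$-distance at least $\varepsilon_0$; passing to a subsequence and taking appropriate convex combinations inside $U$, one extracts $m_n$ almost-orthonormal functions $\tilde e_1,\ldots,\tilde e_{m_n}\in U$ of a common scale $\delta_n\ge\rho_n$. Writing $\mathbf{P}_h$ for the law of one observation under density $1+h$ and using $\int h\,d\mathbf{P}=0$, the $\chi^2$ divergence factors as
\[
\chi^2\!\Bigl(\tfrac{1}{m_n}\sum_{k=1}^{m_n}\mathbf{P}_{\tilde e_k}^{\otimes n}\,\Big\|\,\mathbf{P}^{\otimes n}\Bigr)=\frac{1}{m_n^2}\sum_{j,k=1}^{m_n}\bigl(1+\langle\tilde e_j,\tilde e_k\rangle\bigr)^n-1,
\]
which remains bounded under the calibration $n\delta_n^2\le\log m_n$ (achievable by letting $m_n$ grow as fast as needed, since $\delta_n$ need only satisfy $\delta_n\ge\rho_n\to 0$). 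Le Cam's two-point lemma then prohibits the existence of any uniformly consistent test sequence for $V_n$.

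\textbf{Main obstacle.} The technically delicate step is the construction of $m_n$ almost-orthonormal $\tilde e_k\in U$ of a common scale $\delta_n$: the inner-point property at $\0b$ is only an absorbing-point condition, which supplies a direction-dependent admissible scale but does not obviously yield an $\mathbb{L}_2$-ball of uniform radius around $\0b$. The resolution, already used for the Gaussian analog Theorem \ref{tqq}, goes through centered convex combinations of the $g_k$'s together with a careful rescaling; the density positivity $1+\tilde e_k\ge 0$ is automatic once $\tilde e_k\in U$. Beyond this step the density setting inherits the Gaussian proof, because the i.i.d.\ $\chi^2$ divergence and the Gaussian one agree asymptotically, with $(1+\langle\cdot,\cdot\rangle)^n$ on the density side playing the role of $\exp(n\langle\cdot,\cdot\rangle)$ on the Gaussian side.
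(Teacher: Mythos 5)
Your sufficiency half matches the paper (consistent estimation over a relatively compact set plus a thresholded $\mathbb{L}_2$-norm test), and your $\chi^2$ identity for the uniform mixture of product densities is correct; in effect it re-derives the ingredient the paper simply imports, since the paper's own proof of Theorem \ref{tqqd} is the proof of Theorem \ref{tqq} verbatim with Theorem 4.1 of \cite{er15} (the i.i.d.\ impossibility result for infinitely many orthogonal density alternatives with norms bounded below) used in place of Theorem 5.3 of \cite{er15}. The genuine gap is precisely the step you flag as the main obstacle: producing, inside $U$, arbitrarily many almost-orthogonal elements of a common small scale. You assert its resolution is ``already used for the Gaussian analog Theorem \ref{tqq}'', but that is not what the paper does: there the family fed into the impossibility theorem consists of exactly orthogonal elements of $U$ obtained by a Gram--Schmidt-type maximization (Lemma \ref{lqq}), and the passage from a general convex $U$ with inner point $\0b$ goes through the trimmed symmetrization $\bar U = U\cap(-U)$, Lemma \ref{lqq1} on midpoints of segments, and the angle argument in the proof of Theorem \ref{tqq}; no centered convex combinations of an $\varepsilon_0$-separated family with rescaling appear there, and differences such as $(g_j-g_k)/2$ need not lie in $U$ at all in the density model. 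So, as written, the necessity direction rests on an unproved construction backed by a misattribution.

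The construction can in fact be completed along your lines, which would give an argument genuinely different from (and arguably more direct than) the paper's covering argument: pass to a weakly convergent subsequence $g_k\rightharpoonup g$ (possible since $U$ is bounded); all but one $g_k$ satisfy $\|g_k-g\|\ge \varepsilon_0/2$; since $\0b$ is an inner point there is $\lambda>0$ with $-\lambda g\in U$, and convexity gives $\frac{\lambda}{1+\lambda}(g_k-g)\in U$; because $\0b\in U$, these elements can be shrunk to any smaller scale while staying in $U$ (so the density constraint $1+f\ge 0$ is respected, which is exactly why one must stay inside $U$ rather than use raw differences), and weak convergence lets you select, for each $n$, a fresh family of size $m_n$ with pairwise inner products as small as needed. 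Two further repairs: to contradict uniform consistency in the sense of (\ref{uc1}) you need the $\chi^2$-divergence of the mixture to tend to zero along a subsequence, so calibrate, say, $m_n\ge \exp\{2n\delta_n^2\}$ rather than $n\delta_n^2\le \log m_n$, which only gives boundedness; and the concluding step is the standard mixture-versus-null Bayes-risk bound, not Le Cam's two-point lemma. Alternatively, keep the selected elements at a fixed scale (they lie in $V_n$ eventually because $\rho_n\to 0$) and invoke Theorem 4.1 of \cite{er15}, exactly as the paper does.
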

 Reasoning in the proof of Theorem \ref{tqqd} coincides with the reasoning of proof of Theorem \ref{tqq} with unique difference we implement Theorem 4.1 in \cite{er15} instead of Theorem 5.3 in \cite{er15}. We omit this reasoning.

   Similar Theorem holds for problem of signal detection in linear inverse ill-posed problem.

 In Hilbert space $\mathbb{H}$, we observe
  a realization of  Gaussian random vector
 \begin{equation}\label{il1}
  \yb = A\thetab + \epsilon \xib, \quad \epsilon > 0,
  \end{equation}
  where $A: \mathbb{H} \to \mathbb{H}$ is known  linear operator and $\xib$ is Gaussian random vector having known covariance operator $R: \mathbb{H} \to \mathbb{H}$ and
  $\mathbf{E} [\xib] = 0$.

  We explore the same problem of hypothesis testing  $\mathbb{H}_0 : \thetab = 0$ versus alternatives $\mathbb{H}_n\, :\, \thetab \in V_n$.

For any operator $S: \mathbb{H} \to \mathbb{H}$ denote $\frak{R}(S)$ the rangespace of $S$.

  Suppose that the nullspaces of $A$ and $R$ equal  zero and $\frak{R}(A) \subseteq \frak{R}(R^{1/2})$.

\begin{thm}\label{tqq1} Let  operator $R^{-1/2}A$ be bounded. Suppose that bounded set U is  convex and $\0b$ is inner point of $U$. Then the statement of Theorem \ref{tqq} holds.
\end{thm}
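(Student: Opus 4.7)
The plan is to whiten the observation, reducing the inverse problem to the direct white noise model underlying Theorem \ref{tqq}, and then to inherit the dichotomy through a randomized coupling of tests. Applying $R^{-1/2}$ to both sides of (\ref{il1}) yields the equivalent observation
\begin{equation*}
\tilde\yb := R^{-1/2}\yb = \tilde A\thetab + \epsilon\,\tilde\xib, \qquad \tilde A := R^{-1/2}A,
\end{equation*}
where $\tilde\xib := R^{-1/2}\xib$ is standard Gaussian white noise on $\mathbb{H}$. The assumption $\frak{R}(A)\subseteq\frak{R}(R^{1/2})$ together with $\ker R=\{\0b\}$ makes $\tilde A$ well defined on $\mathbb{H}$, the hypothesis of Theorem \ref{tqq1} makes it bounded, and $\ker A=\{\0b\}$ makes it injective. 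The map $\yb\mapsto\tilde\yb$ is a measurable bijection onto its image, so any statistical problem based on $\yb$ is equivalent to the corresponding problem based on $\tilde\yb$.

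For the sufficient direction (relatively compact $U$ admits a uniformly consistent sequence of tests for some $\rho_n\to 0$), I would construct the test from a consistent estimator. Under relative compactness of $U$ and boundedness of $\tilde A$ there exists a regularised estimator $\hat\thetab=\hat\thetab(\tilde\yb)$ satisfying $\sup_{\thetab\in U}\mathbf{E}\|\hat\thetab-\thetab\|\to 0$ as $\epsilon\to 0$; this is the inverse-problem analogue of Theorem~5.3 of \cite{er15} invoked in Theorem \ref{tqq} and mirrors the remark immediately following it. Choosing $\rho_n$ to decay more slowly than this estimation error, the test $\mathbf{1}\{\|\hat\thetab\|>\rho_n/2\}$ is uniformly consistent on $V_n$.

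For the necessary direction (non-compact $U$ precludes uniform consistency for every $\rho_n\to 0$), I would use a randomized reduction to the direct model of Theorem \ref{tqq}. Given a direct observation $\yb^d=\thetab+\epsilon'\xib^d$ with $\xib^d$ standard Gaussian and $\epsilon':=\epsilon/\|\tilde A\|$, the random element $\tilde\yb':=\tilde A\yb^d+\xib''$ with an independent centred Gaussian $\xib''$ of covariance $\epsilon^2 I-(\epsilon')^2\tilde A\tilde A^*$ (which is positive semidefinite because $\epsilon'\|\tilde A\|=\epsilon$) has the same law as $\tilde\yb$. Consequently every test on $\tilde\yb$ can be simulated from $\yb^d$ with identical type I and type II error probabilities on the \emph{same} sets $V_n$ and the \emph{same} $\rho_n$. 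Contraposing, since $U$ is not relatively compact, Theorem \ref{tqq} rules out uniformly consistent tests in the direct problem for any $\rho_n\to 0$ at any $\epsilon'\to 0$, so none exist in the inverse problem at noise $\epsilon=\|\tilde A\|\epsilon'\to 0$ either.

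The main obstacle is the necessity direction, where one must verify that the constraint $\|\thetab\|\ge\rho_n$, formulated in the pre-image coordinates, does not grant additional testing power once one passes to the $\tilde A\thetab$-geometry natural for the whitened model; this is precisely where a naive ``pushforward of sets'' argument breaks down because $\tilde A$ is not assumed coercive. The randomized coupling above resolves this by transporting \emph{tests} rather than equating \emph{sets}, and uses only the boundedness of $\tilde A$. A secondary technical point is to cite the correct existence theorem for consistent estimators in linear ill-posed problems under a compact source constraint, which plays here the role that Theorem~5.3 of \cite{er15} plays in the proof of Theorem \ref{tqq}, just as Theorem~4.1 of \cite{er15} replaces it in the proof of Theorem \ref{tqqd}.
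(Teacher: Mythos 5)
Your proposal is essentially correct, but it takes a genuinely different route from the paper. The paper's proof of Theorem \ref{tqq1} is a one-line substitution: it reruns the proof of Theorem \ref{tqq} (the symmetrization, the orthogonal sequence $\eb_i$, the covering argument) and replaces its single external ingredient — Theorem 5.3 of \cite{er15}, used there to conclude $d_i \to 0$, i.e.\ that simple alternatives staying at a fixed distance from $\0b$ cannot all be consistently tested — by its analogue for linear ill-posed problems, Theorem 5.5 of \cite{er15}; nothing else in the geometric argument changes. You instead keep Theorem \ref{tqq} itself as a black box: whitening reduces (\ref{il1}) to the Gaussian shift with operator $\tilde A = R^{-1/2}A$, and your Markov-kernel coupling $\tilde A \yb^d + \xib''$ shows that the direct experiment at noise level $\epsilon/\|\tilde A\|$ is more informative than the whitened inverse experiment, so for non-compact $U$ the nonexistence of uniformly consistent tests is inherited by contraposition, while sufficiency comes from consistent estimation on compacts. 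Your reduction is attractive because it isolates exactly where boundedness of $R^{-1/2}A$ enters and avoids repeating the covering construction; the paper's route avoids all experiment-equivalence and coupling technicalities by quoting a ready-made impossibility criterion for inverse problems. Three points in your write-up need tightening: (a) $R^{-1/2}$ cannot be applied to $\yb$ pathwise, since realizations of $\xib$ lie outside $\frak{R}(R^{1/2})$ almost surely; the whitening must be stated as an equivalence of Gaussian experiments (this is precisely where $\frak{R}(A)\subseteq \frak{R}(R^{1/2})$ and the boundedness of $R^{-1/2}A$ are used), and likewise $\xib''$ is only a cylindrical Gaussian; (b) your coupling turns a test in the inverse problem into a \emph{randomized} test in the direct problem, so you must note that the necessity half of Theorem \ref{tqq} covers randomized tests — it does, because it rests on the distance-type criterion of \cite{er15}; (c) for sufficiency the relevant references are \cite{ih} and \cite{jo} (consistent estimation on relatively compact sets), combined with injectivity of $\tilde A$ and compactness of the closure of $U$, which make $\tilde A^{-1}$ uniformly continuous on $\tilde A U$ — Theorem 5.3 of \cite{er15} is a testing criterion, not an estimation result, so your citation there is misplaced.
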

\begin{remark} In papers another definition  of uniform consistency is often explored (see, for example, \cite{ing87}). In this definition, (\ref{uc1}) is replaced with  the requirement of existence of sequence of tests $K_n$ such that $\alpha(K_n) \to 0$ and $\beta(K_n,V_n) \to 0$ as $n \to \infty$. By Theorem on exponential decay of type I and type II error probabilities (see  \cite{le73} and  \cite{sch}), the statements of Theorems \ref{tqq} -- \ref{tqq1} for   this definition of consistency follows from  these Theorems.
 \end{remark}
\section{Quadratic test statistics \label{sec4}}
  \subsection{General setup \label{s4.1}}
We  explore  problem of signal detection in Gaussian white noise (\ref{q1}) and (\ref{i30}) with $0 < r < 1/2$ discussed in Introduction. Problem is provided in terms of sequence model (\ref{q2}).

 If $U$ is compact ellipsoid
 \begin{equation*}
 U = \left\{\,\thetab\,:\, \sum_{j=1}^\infty a_j\,\theta_j^2 \le P_0, \thetab = \{\theta_j\}_{j=1}^\infty, \theta_j \in \mathbb{R}^1\right\}
 \end{equation*}
 with $a_j > 0$, $a_j \to \infty$ as $j \to \infty$,
  asymptotically minimax test statistics for sets of alternatives $V_n$ are  quadratic forms
$$
T_n(Y_n) = \sum_{j=1}^\infty \kappa_{nj}^2 y_j^2 - \sigma^2 n^{-1} \rho_n
$$
with some specially defined coefficients $\kappa^2_{nj}$ (see Ermakov \cite{er90}). Here $\rho_n = \sum_{j=1}^\infty \kappa_{nj}^2$.

If coefficients $\kappa_{nj}^2$ satisfy some regularity assumptions,  test statistics $T_n(Y_n)$ are asymptotically minimax (see \cite{er04}) for   wider sets of alternatives
$$
\mathbb{H}_n : f \in \Upsilon_n(R_n,c)  = \{\, f:  R_n(f) > c ,\,\, f \in \mathbb{L}_2(0,1) \,\}
$$
with
$$
R_n(f) = A_n(\thetab) = \sigma^{-4}\,n^{2}\,\sum_{j=1}^\infty\, \kappa_{nj}^2\,\theta_j^2.
$$
and $f = \sum_{j=1}^\infty \theta_j \phi_j$.

A sequence of tests $L_n, \alpha(L_n) = \alpha(1+ o(1))$, $0 <\alpha<1$, is called {\sl asymptotically minimax}  if, for any sequence of tests $K_n$, $\alpha(K_n) \le \alpha,$ there holds
\begin{equation*}
\liminf_{n\to \infty}(\beta\,(K_n,\,\Upsilon_n(R_n,c)) - \beta\,(L_n,\Upsilon_n(R_n,c))) \ge 0.
\end{equation*}
Sequence of test statistics $T_n$ is called asymptotically minimax if   tests generated test statistics $T_n$ are asymptotically minimax.

We make the following  assumptions.

\noindent{\bf A1.} For each $n$  sequence $\kappa^2_{nj}$ is decreasing.

\noindent{\bf A2.} There are positive constants $C_1$ and $C_2$ such that, for each $n$, there holds
\begin{equation}\label{q5}
 C_1 < A_n = \sigma^{-4}\,n^2\,\sum_{j=1}^\infty \kappa_{nj}^4 < C_2.
 \end{equation}

 \noindent{\bf A3.} There are positive constants $c_1$ and $c_2$ such that $c_1n^{-2r} \le \rho_n \le c_2 n^{-2r}$.

  Denote $\kappa_n^2=\kappa^2_{nk_n}$ with $k_n = \sup\,\Bigl\{k: \sum_{j < k} \kappa^2_{nj} \le \frac{1}{2} \rho_n \Bigr\}$.

\noindent{\bf A4.}   There are $C_1$  and $\lambda >1$ such that, for any $\delta > 0$ and for each $n$ we have
 \begin{equation*}
\kappa^2_{[n,(1+\delta)k_n]} < C_1(1 +\delta)^{-\lambda}\kappa_n^2.
\end{equation*}
\noindent{\bf A5.} There holds $\kappa_{n1}^2  \asymp \kappa_n^2$ as $n \to \infty$.  For any $c>1$  there is $C$ such that $\kappa_{n,[ck_n]}^2 \ge C\kappa_n^2$ for all $n$.

\noindent{\sl Example}.  Let
$$
\kappa^2_{nj} = n^{-\lambda}\frac{1}{j^{\gamma}  + c n^\beta}, \quad \gamma >1,
$$
with $\lambda = 2 - 2r -\beta$ and $\beta = (2-4r)\gamma$.
Then A1 -- A5 hold.

Note that A1-A5 imply
\begin{equation}\label{u1}
\kappa_n^4=\kappa^4_{nk_n}  \asymp n^{-2}k_n^{-1}\quad \mbox{and} \quad k_n \asymp n^{2-4r}.
\end{equation}
Theorems \ref{tq3} - \ref{tq8} given below represent  realization of program announced in Introduction.
 \subsection{Analytic form of necessary and sufficient conditions of consistency \label{s4.2}}
 The results will be provided in terms of  Fourier coefficients of functions $f_n = \sum_{j=1}^\infty \theta_{nj} \phi_j$.
\begin{thm}\label{tq3} Assume {\rm A1-A5}. Sequence of alternatives $f_n$, $cn^{-r} \le \|f_n\| \le Cn^{-r}$, is consistent, if and only if, there are $c_1$, $c_2$ and $n_0$  such that there holds
\begin{equation}\label{con2}
\sum_{|j| < c_2k_n} |\theta_{nj}|^2 > c_1 n^{-2r}
\end{equation}
for all $n > n_0$.
\end{thm}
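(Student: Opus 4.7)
The plan is to reduce the theorem to a Gaussian approximation of the test statistic and then to an analytic comparison of the shift with the null standard deviation. Write
\begin{equation*}
T_n(Y_n) = \mu_n + 2\sigma n^{-1/2}\sum_j \kappa_{nj}^2\theta_{nj}\xi_j + \sigma^2 n^{-1}\sum_j \kappa_{nj}^2(\xi_j^2 - 1),\qquad \mu_n = \sum_j \kappa_{nj}^2\theta_{nj}^2,
\end{equation*}
so under $H_0$ one has $\mathbf{E}[T_n] = 0$ and $\sigma_n^2 := \mathrm{Var}(T_n|H_0) = 2\sigma^4 n^{-2}\sum_j\kappa_{nj}^4$. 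By A2, $\sigma_n^2\asymp n^{-4}$. I verify the Lindeberg condition for the weighted quadratic form: A5 together with (\ref{u1}) give $\max_j\kappa_{nj}^4/\sum_j\kappa_{nj}^4\asymp k_n^{-1}\to 0$ since $r<1/2$, so $T_n$ is asymptotically normal under $H_0$. Under $f_n$ the cross-term has variance $4\sigma^2 n^{-1}\sum_j\kappa_{nj}^4\theta_{nj}^2\le 4\sigma^2n^{-1}\kappa_{n1}^2\mu_n$, which by A5 and $\kappa_n^2\asymp n^{-2+2r}$ is $o(\sigma_n^2)$, so the alternative distribution of $T_n$ is $N(\mu_n,\sigma_n^2(1+o(1)))$. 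Consequently, consistency of $f_n$ in the sense of (\ref{vas1}) is equivalent to $\liminf_n \mu_n/\sigma_n>0$, i.e.\ $\liminf_n n^2\mu_n > 0$.

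For the sufficiency of (\ref{con2}), I combine A1 and A5: since $\kappa_{nj}^2$ is decreasing and $\kappa_{n,[c_2 k_n]}^2\ge C\kappa_n^2$, one has $\kappa_{nj}^2\ge C\kappa_n^2$ for all $|j|\le c_2 k_n$, so
\begin{equation*}
\mu_n \;\ge\; C\kappa_n^2\sum_{|j|<c_2 k_n}\theta_{nj}^2 \;\ge\; Cc_1\,\kappa_n^2\,n^{-2r}\;\asymp\;n^{-2},
\end{equation*}
using $\kappa_n^2 n^{-2r}\asymp n^{-2}$ from (\ref{u1}). Hence $n^2\mu_n$ is bounded below by a positive constant and $f_n$ is consistent.

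For the necessity, I argue by contraposition. If (\ref{con2}) fails for every choice of $c_1,c_2,n_0$, then a diagonal extraction yields a subsequence $n_m\to\infty$ and $m\to\infty$ with
\begin{equation*}
\sum_{|j|<m k_{n_m}}\theta_{n_m,j}^2 \;\le\; m^{-1}n_m^{-2r}.
\end{equation*}
Split $\mu_{n_m}$ accordingly. For $|j|<mk_{n_m}$, monotonicity and A5 give $\kappa_{n_m,j}^2\le\kappa_{n_m,1}^2\le C\kappa_{n_m}^2$, so this portion is $\le Cm^{-1}\kappa_{n_m}^2 n_m^{-2r}=o(n_m^{-2})$. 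For $|j|\ge mk_{n_m}$, apply A4 with $\delta=m-1$ to get $\kappa_{n_m,j}^2\le\kappa_{n_m,[mk_{n_m}]}^2\le Cm^{-\lambda}\kappa_{n_m}^2$, and bound the $\theta$-sum by $\|f_{n_m}\|^2\le C n_m^{-2r}$; the tail contribution is $\le Cm^{-\lambda}\kappa_{n_m}^2 n_m^{-2r}=o(n_m^{-2})$. Summing, $n_m^2\mu_{n_m}\to 0$, contradicting $\liminf_n n^2\mu_n>0$ and hence the consistency of $f_n$.

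The main technical obstacle is the Gaussian approximation step: one must verify Lindeberg for $T_n$ under $H_0$ and control the additional variance under the alternative uniformly in $f_n$ with $\|f_n\|\asymp n^{-r}$, so that the threshold and the shift can be compared on the same scale. Once this is in place, the analytic equivalence between $\liminf n^2\mu_n>0$ and (\ref{con2}) is a clean two-block decomposition that rests on the interplay of the monotonicity A1, the decay rate A4, and the lower bound A5 calibrated to the critical index $k_n$.
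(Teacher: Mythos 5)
Your proof is correct and follows essentially the same route as the paper: consistency is reduced to whether $A_n(\thetab_n)=n^2\sum_j\kappa_{nj}^2\theta_{nj}^2$ stays bounded away from zero, sufficiency comes from the lower bound $C\kappa_n^2\sum_{|j|<c_2k_n}\theta_{nj}^2\asymp n^{-2}$ via A5 and (\ref{u1}), and necessity from the same head/tail split using A1, A4, A5 that the paper delegates to Theorem \ref{tq4}. The only difference is that you re-derive the Gaussian approximation (Lindeberg for the null quadratic form plus the cross-term variance bound) where the paper simply invokes the uniform asymptotics (\ref{aq1}) of Theorem \ref{tq2}.
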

Versions of Theorems \ref{tq3}, \ref{tq4} and \ref{tq6} hold for setups of other sections. In setups of these sections indices $j$ may accept negative values and $\theta_{nj}$ may be complex numbers. By this reason we write $|j|$ instead of $j$ and $|\theta_{nj}|$ instead of $\theta_{nj}$ in (\ref{con2}), (\ref{con3}) and (\ref{con19}).
\begin{thm}  \label{tq4}  Assume {\rm A1-A5}.  Sequence of alternatives $f_n$, $cn^{-r} \le \|f_n\| \le Cn^{-r}$, is inconsistent, if and only if, for any  $c_2$, there holds
\begin{equation}\label{con3}
\sum_{|j| < c_2k_n} |\theta_{nj}|^2 = o( n^{-2r})\quad \mbox{as} \quad {n \to \infty}.
\end{equation}
\end{thm}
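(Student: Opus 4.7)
The plan is to reduce the inconsistency of $f_n$ (with respect to the test statistic $T_n$) to the single analytic condition $\mu_n := \sum_j \kappa_{nj}^2 \theta_{nj}^2 = o(n^{-2})$ on the mean shift captured by $T_n$, and then to show that this is equivalent to the Fourier-coefficient condition in the statement. This sits naturally alongside Theorem \ref{tq3}, where consistency corresponded to $\mu_n \gtrsim n^{-2}$ along some subsequence.

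For the first reduction, I would compute the first two moments of $T_n$. Under $\mathbb{H}_0$, a direct calculation using $\mathrm{Var}(\xi_j^2) = 2$ gives $E_0 T_n = 0$ and $\mathrm{Var}_0 T_n = 2\sigma^4 n^{-2}\sum_j \kappa_{nj}^4 \asymp n^{-4}$ by A2. Under $f_n$, $E_{f_n} T_n = \mu_n$, and the extra variance term $(4\sigma^2/n)\sum_j \kappa_{nj}^4 \theta_{nj}^2 \le (4\sigma^2/n)\kappa_{n1}^2\mu_n$ is negligible compared to $\mathrm{Var}_0 T_n$ whenever $\mu_n \lesssim n^{-2}$. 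A Lindeberg-type central limit theorem (the sum has $\asymp k_n \to \infty$ comparable summands by A5) then gives asymptotic normality of $(T_n - E T_n)/\sqrt{\mathrm{Var}_0 T_n}$ under both hypotheses. Consequently, for any test $K_n = \mathbf{1}\{T_n > t_n\}$ with $\alpha(K_n)\to\alpha$, one has $\alpha(K_n)+\beta(K_n,f_n)\to 1$ if and only if $n^2\mu_n \to 0$, so inconsistency is equivalent to $\mu_n = o(n^{-2})$.

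It then remains to show $\mu_n = o(n^{-2})$ iff $\sum_{|j|<c_2 k_n}|\theta_{nj}|^2 = o(n^{-2r})$ for every $c_2 > 0$. From A2, A3 and (\ref{u1}) one gets $\kappa_n^2 \asymp n^{-2+2r}$ and $k_n \asymp n^{2-4r}$, and A5 gives $\kappa_{n1}^2 \asymp \kappa_n^2$. For the ``if'' direction, fix $\epsilon>0$ and, using A4, choose $c_2>1$ with $\kappa_{n,[c_2 k_n]}^2 \le C c_2^{-\lambda}\kappa_n^2 \le \epsilon \kappa_n^2$. Splitting
\[ \mu_n \;\le\; \kappa_{n1}^2 \sum_{|j|\le c_2 k_n}|\theta_{nj}|^2 \;+\; \kappa_{n,[c_2 k_n]}^2\,\|f_n\|^2 \]
bounds the head by $O(\kappa_n^2)\cdot o(n^{-2r}) = o(n^{-2})$ (for the now-fixed $c_2$) and the tail by $\epsilon\kappa_n^2 \cdot C n^{-2r} \asymp \epsilon\, n^{-2}$, so $\limsup n^2 \mu_n \le C\epsilon$; sending $\epsilon \to 0$ gives $\mu_n = o(n^{-2})$. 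For the ``only if'' direction I argue contrapositively: if the Fourier condition fails, a subsequence $n_k$ satisfies $\sum_{|j|<c_2 k_{n_k}}|\theta_{n_k,j}|^2 \ge c_1 n_k^{-2r}$, and combining with $\kappa_{n_k,j}^2 \ge C\kappa_{n_k}^2$ for $|j|\le c_2 k_{n_k}$ (from A1 and A5) gives $\mu_{n_k} \gtrsim \kappa_{n_k}^2\cdot n_k^{-2r} \asymp n_k^{-2}$, so $\mu_n$ is not $o(n^{-2})$.

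The principal technical hurdle is the CLT step that bridges the paper's definition of inconsistency (phrased via error probabilities of tests based on $T_n$) with the moment-level characterization $\mu_n = o(n^{-2})$; negligibility of the alternative-specific variance term has to be tracked carefully, and uniform asymptotic normality under both $\mathbb{H}_0$ and $f_n$ must be controlled by A1--A5. Once this bridge is in place, the equivalence with the Fourier-coefficient condition is just the two-way splitting of $\mu_n$ at $|j|=c_2 k_n$ described above, using A4 to make the tail $c_2^{-\lambda}$-small and A5 to keep $\kappa_{nj}^2$ comparable to $\kappa_n^2$ throughout the head, exactly as in the proof of Theorem \ref{tq3}.
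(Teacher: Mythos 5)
Your proposal is correct and follows essentially the same route as the paper: the head/tail splitting of $\sum_j \kappa_{nj}^2\theta_{nj}^2$ at $|j| = c_2 k_n$, with A4 controlling $\kappa^2_{n,[c_2k_n]}/\kappa_n^2$ and A1, A5, (\ref{u1}) keeping the head comparable to $\kappa_n^2\sum_{|j|<c_2k_n}|\theta_{nj}|^2$, is exactly the paper's argument (\ref{eq2})--(\ref{eq102}), combined for necessity with the lower bound already used for Theorem \ref{tq3}. The only difference is that your moment-plus-Lindeberg bridge between error probabilities and $\mu_n=\sum_j\kappa_{nj}^2\theta_{nj}^2$ re-derives what the paper simply cites as Theorem \ref{tq2} (whose proof consists of the same moment computations and a CLT from \cite{er90}), so nothing essentially new or missing is involved.
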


Proof of Theorems is based on Theorem \ref{tq2} on asymptotic minimaxity of test statistics $T_n$.

Define sequence of tests $K_n(Y_n) = {\bf 1}_{\{n^{-1}T_n(Y_n) > (2A_n)^{1/2} x_\alpha\}}$, $0 < \alpha <1$, where $x_\alpha$ is defined by the equation $\alpha = 1 - \Phi(x_\alpha)$.
\begin{thm}\label{tq2}
Assume {\rm A1-A5}. Then sequence of tests $K_n(Y_n)$ is asymptotically minimax for the sets  $\Upsilon_n(R_n,c)$ of alternatives.
There hold $\alpha(K_n) = \alpha + o(1)$ and
\begin{equation}\label{aq1}
\beta(K_n,f_n) = \Phi(x_{\alpha} - R_n(f_n)(2A_n)^{-1/2})(1+o(1))
\end{equation}
uniformly onto all  sequences $f_n$ such that $R_n(f_n)< C$ for any $C > 0$.
\end{thm}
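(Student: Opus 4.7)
The plan is to analyze the distribution of $T_n$ under both $\mathbb{H}_0$ and the alternatives via a central limit theorem, then obtain minimaxity from a matching Bayesian lower bound. Under $\mathbb{H}_0$ we have $y_j = \sigma n^{-1/2}\xi_j$, so $T_n = \frac{\sigma^2}{n}\sum_j \kappa_{nj}^2(\xi_j^2-1)$ is a centered weighted sum of $\xi_j^2 - 1$ with variance equal to $\frac{2\sigma^4}{n^2}\sum_j \kappa_{nj}^4$, i.e.\ $2\sigma^8 A_n/n^4$ by the definition of $A_n$. Conditions A1--A5 --- boundedness of $A_n$ in (\ref{q5}), the weight decay A4 and the comparability A5 --- supply a Lindeberg condition, and a standard weighted-$\chi^2$ CLT yields asymptotic standard normality after rescaling, producing $\alpha(K_n)=\alpha+o(1)$.

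Under the alternative $f=f_n$, I expand
\begin{equation*}
T_n = \sum_j \kappa_{nj}^2\theta_{nj}^2 + \tfrac{2\sigma}{\sqrt{n}}\sum_j\kappa_{nj}^2\theta_{nj}\xi_j + \tfrac{\sigma^2}{n}\sum_j\kappa_{nj}^2(\xi_j^2-1).
\end{equation*}
The first summand is deterministic and equals $\sigma^4 R_n(f_n)/n^2$; dividing by the null standard deviation gives a mean shift of $R_n(f_n)(2A_n)^{-1/2}$. The third summand has the same law as under $\mathbb{H}_0$. For the cross term I apply $\sum_j\kappa_{nj}^4\theta_{nj}^2 \le \kappa_{n1}^2\sum_j\kappa_{nj}^2\theta_{nj}^2$ together with (\ref{u1}) and A5 to bound its variance by order $k_n^{-1/2}$ times the null variance whenever $R_n(f_n)\le C$, so it contributes $o(1)$ to the standardized statistic. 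The CLT then transfers to the alternative, uniformly on $\{f_n : R_n(f_n) < C\}$, producing (\ref{aq1}).

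To complete asymptotic minimaxity, I construct a Gaussian prior on $\Upsilon_n(R_n,c)$ with independent $\theta_j\sim N(0,\tau_{nj}^2)$, $\tau_{nj}^2\propto\kappa_{nj}^2$ normalized so that $\sum_j\kappa_{nj}^2\tau_{nj}^2=c\sigma^4/n^2$; this concentrates the mass on the boundary $R_n=c$. The log-likelihood ratio of the marginal mixture against the null is, to leading order, an affine function of $T_n$, so the Neyman--Pearson test against this prior coincides asymptotically with $K_n$ and its Bayes error attains the right-hand side of (\ref{aq1}) with $R_n=c$. Standard duality (as in \cite{er90,er04}) converts this Bayes bound into the minimax lower bound for $\inf_{L}\sup_{f\in\Upsilon_n(R_n,c)}\beta(L,f)$, matching the upper bound from the previous paragraph.

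The main technical obstacle is proving the CLT in the second step \emph{uniformly} over $\{f_n:R_n(f_n)<C\}$: while the leading variance always matches the null variance, one must check that the Lindeberg remainder is uniformly $o(1)$, and this is exactly where A4 (controlling how fast $\kappa_{nj}^2$ decays past $k_n$) and A5 (preventing collapse of $\kappa_{nj}^2$ near $k_n$) do the work, since without them a single coordinate could absorb $R_n(f_n)$ and violate the Lindeberg condition. Once uniformity is in hand, the Bayesian lower bound is essentially a Gaussian-against-Gaussian Neyman--Pearson calculation and presents no further difficulty.
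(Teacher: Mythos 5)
Your route is essentially the paper's own. The upper bound in the paper is exactly your decomposition of $\sum_j\kappa_{nj}^2y_j^2$ into the deterministic shift $n^{-2}\sigma^4R_n(f_n)$, the cross term and the centered weighted $\chi^2$ term, with the cross term controlled by $\sum_j\kappa_{nj}^4\theta_{nj}^2\le\kappa_{n1}^2\sum_j\kappa_{nj}^2\theta_{nj}^2$ together with A5 and (\ref{u1}) (this is (\ref{aq2})--(\ref{aq5})); the CLT for the weighted $\chi^2$ sum and the Bayes lower bound with the Gaussian prior $\theta_j\sim N(0,\tau_{nj}^2)$, $\tau_{nj}^2\propto\kappa_{nj}^2$, are precisely the ingredients the paper outsources to Theorem 1 and Lemma 1 of \cite{er90}, and your signal-to-noise computation for that prior (giving Bayes type II error $\Phi(x_\alpha-c(2A_n)^{-1/2})$) is the correct one. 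The only loose end on the lower-bound side is that with the normalization $\sum_j\kappa_{nj}^2\tau_{nj}^2=c\sigma^4/n^2$ roughly half of the prior mass falls outside $\Upsilon_n(R_n,c)$; since $R_n$ concentrates under the prior at rate $k_n^{-1/2}$, this is repaired by the standard device of normalizing to $c+\epsilon_n$ with $\epsilon_n\to0$ slowly, or by conditioning on $\{R_n>c\}$, and should be said explicitly.

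The one genuine omission is the regime of unbounded $R_n(f_n)$. Asymptotic minimaxity requires $\sup_{f\in\Upsilon_n(R_n,c)}\beta(K_n,f)\le\Phi(x_\alpha-c(2A_n)^{-1/2})+o(1)$, and $\Upsilon_n(R_n,c)$ contains alternatives with arbitrarily large $R_n(f)$; your CLT-based bound is uniform only on $\{R_n(f_n)<C\}$ and says nothing there. Indeed, when $R_n(f_n)$ grows (e.g.\ concentrated on one coordinate, with cross-term standard deviation of order $(R_nk_n^{-1/2})^{1/2}$ times the null standard deviation) the normal approximation you rely on can fail, so "matching the upper bound from the previous paragraph" is not yet justified for the full set. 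The paper closes this with a Chebyshev-inequality step based on (\ref{aq3}) and (\ref{aq5}): if $A_n=o(A_n(\thetab_n))$, i.e.\ $R_n(f_n)\to\infty$, then $\beta(K_n,f_n)\to0$, which reduces the problem to the case $R_n(f_n)\asymp1$ covered by your CLT. Your own variance computations already give this; you only need to add the step before invoking the matching with the Bayes bound.
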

A version of Theorem \ref{tq2} for the problem of signal detection
with  heteroscedastic white noise  has been proved in \cite{er03}.

Such a form of conditions in Theorems \ref{tq3} and \ref{tq4} can be explained by concentration of coefficients $\kappa_{nj}^2$  in zone $j = O(k_n)$ for test statistics $T_n$ and  for $A_n(\thetab_n)$.

Version of Theorem \ref{tq2} for problem of hypothesis testing on distribution function provides necessary and sufficient conditions of uniform consistency of sets of alternatives defined in terms of distribution functions.
\subsection{Maxisets. Qualitative structure of consistent sequences of alternatives}
Denote $s = \frac{r}{2 -4r}$. Then $r = \frac{2s}{1 + 4s}$.
\begin{thm}\label{tq1} Assume {\rm A1-A5}. Then balls  $\mathbb{\bar B}^s_{2\infty}(P_0)$, $P_0 > 0$, are maxisets for  test statistics $T_n(Y_n)$. \end{thm}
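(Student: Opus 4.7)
The plan is to reduce everything to the analytic characterizations given by Theorems \ref{tq3} and \ref{tq4}. The key algebraic fact I will use repeatedly is that by A3--A5 and (\ref{u1}) we have $k_n \asymp n^{2-4r}$, and the relation $r = 2s/(1+4s)$ forces $2s(2-4r)=2r$, so $k_n^{-2s} \asymp n^{-2r}$; symmetrically, $(1-2r)/r = 1/(2s)$.

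\emph{Direction (i), consistency on $\gamma U$.} Let $f_n = \sum_j \theta_{nj}\phi_j \in \gamma\,\mathbb{\bar B}^s_{2\infty}(P_0)$ with $c n^{-r} \le \|f_n\| \le C n^{-r}$. The Besov-body constraint gives $\sum_{j>\lambda}\theta_{nj}^2 \le \gamma^2 P_0\, \lambda^{-2s}$ for every $\lambda>0$. Apply it at $\lambda = c_2 k_n$ and use $k_n^{-2s} \asymp n^{-2r}$ to obtain $\sum_{j>c_2 k_n}\theta_{nj}^2 \le C'\, c_2^{-2s}\, n^{-2r}$. Choose $c_2$ so large that $C'c_2^{-2s} < c^2/2$; then the lower bound on $\|f_n\|$ yields $\sum_{|j|\le c_2 k_n}\theta_{nj}^2 \ge (c^2/2)\,n^{-2r}$, which is exactly (\ref{con2}). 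Theorem \ref{tq3} gives consistency.

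\emph{Direction (ii), inconsistent tail in any admissible $V$.} Let $f=\sum_j \theta_j\phi_j \notin \mathbb{\bar B}^s_{2\infty}$, so $\sup_{\lambda>0}\lambda^{2s}\sum_{j>\lambda}\theta_j^2 = \infty$. Since $f\in\mathbb{L}_2$, this supremum is finite on every bounded interval of $\lambda$, so there exist $\lambda_i \to \infty$ with $S_i := \sum_{j>\lambda_i}\theta_j^2$ satisfying $\lambda_i^{2s} S_i \to \infty$ (and necessarily $S_i \to 0$). Let $V$ be any convex ortho-symmetric set containing $f$. Averaging $f$ with its sign-flipped companion $\sum_{j\le\lambda_i}(-\theta_j)\phi_j + \sum_{j>\lambda_i}\theta_j\phi_j$ (which lies in $V$ by ortho-symmetry) and using convexity shows $f_{(i)} := \sum_{j>\lambda_i}\theta_j\phi_j \in V$. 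Set $n_i := \lceil S_i^{-1/(2r)}\rceil$, so $n_i\to\infty$ and $\|f_{(i)}\|^2 = S_i \asymp n_i^{-2r}$; take $f_{n_i}:=f_{(i)}$. Now $k_{n_i} \asymp n_i^{2-4r} \asymp S_i^{-(1-2r)/r} = S_i^{-1/(2s)}$, hence
\[
\frac{\lambda_i}{k_{n_i}} \asymp \lambda_i\, S_i^{1/(2s)} = \bigl(\lambda_i^{2s} S_i\bigr)^{1/(2s)} \to \infty.
\]
Therefore for every fixed $c_2>0$, eventually $c_2 k_{n_i} < \lambda_i$, so $\sum_{|j|<c_2 k_{n_i}}\theta_{n_i,j}^2 = 0$, trivially satisfying (\ref{con3}). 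Theorem \ref{tq4} then yields inconsistency of the subsequence $f_{n_i}$.

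\emph{Main obstacle.} The substance is entirely in the exponent bookkeeping $k_n^{-2s}\asymp n^{-2r}$ and $(1-2r)/r=1/(2s)$, both equivalent to $r=2s/(1+4s)$; once these are in hand, Theorems \ref{tq3}--\ref{tq4} do all the probabilistic work. The only non-algebraic step is verifying that the pure tail $f_{(i)}$ stays inside the given convex ortho-symmetric set $V$, which is a one-line convex-combination argument using the property recalled just before Subsection \ref{ss2.5}.
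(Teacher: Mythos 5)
Your proposal is correct and follows essentially the same route as the paper: direction (i) is exactly the paper's Lemma \ref{ld1} computation ($k_n^{-2s}\asymp n^{-2r}$) fed into Theorem \ref{tq3}, and direction (ii) reproduces the paper's tail construction $\sum_{j>m_l}\tau_j\phi_j$ with the same calibration of $n_l$ and the same convexity/ortho-symmetry step. The only cosmetic difference is that you conclude inconsistency by citing Theorem \ref{tq4}, whereas the paper bounds $A_{n_l}(\etab_l)$ directly and invokes Theorem \ref{tq2}; these amount to the same estimate.
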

  For maxisets $\mathbb{\bar B}^s_{2\infty}(P_0)$ with  deleted "small" $\mathbb{L}_2$- ball asymptotically minimax tests  have been found in \cite{er18}. In \cite{ing02}, similar result has been obtained   for Besov bodies in $\mathbb{ B}^s_{2\infty}$ defined in terms of  wavelets  coefficients.
\begin{thm}\label{tq7} Assume {\rm A1-A5}. Then
 sequence of alternatives $f_n$, $c n^{-r}\le \|f_{n}\| \le C n^{-r}
$, is consistent, if and only if,  there are maxiset  $\mathbb{\bar B}^s_{2\infty}(P_0)$, $P_0 > 0$,  and sequence  $f_{1n} \in \mathbb{\bar B}^s_{2\infty}(P_0)$, $c_1 n^{-r}\le \|f_{1n}\| \le C_1 n^{-r}
$, such that $f_{1n}$ is  orthogonal to $f_n - f_{1n}$, that is, there holds
\begin{equation}
\label{ma1}
\| f_n \|^2 =  \| f_{1n}\|^2  + \|f_n - f_{1n}\|^2,
\end{equation}
Therefore, if we have maxiset $\mathbb{\bar B}^s_{2\infty}(P_0)$, $P_0 > 0$,  sequence  of arbitrary functions $f_{1n} \in \mathbb{\bar B}^s_{2\infty}(P_0)$, $c_1 n^{-r}\le \|f_{1n}\| \le C_1 n^{-r}$ and sequence of arbitrary functions $f_{2n} $, $c_1 n^{-r}\le \|f_{2n}\| \le C_1 n^{-r}$ orthogonal to $f_{1n}$, then sequence of simple alternatives $f_n = f_{1n} + f_{2n}$ is consistent.
\end{thm}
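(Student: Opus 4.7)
The plan is to reduce both implications to the analytic criterion of Theorem \ref{tq3}. The key scaling that links the Besov description of the maxiset to the cutoff appearing in (\ref{con2}) is $k_n^{2s}\asymp n^{2r}$, which follows from (\ref{u1}) and the relation $s=r/(2-4r)$. Under this scaling, membership in $\mathbb{\bar B}^s_{2\infty}(P_0)$ gives a tail estimate $\sum_{|j|>c_2k_n}|\theta_j|^2 \le P_0(c_2k_n)^{-2s}\asymp c_2^{-2s}n^{-2r}$, which is of the same order as the critical rate $n^{-2r}$ and can be made arbitrarily small relative to it by choosing $c_2$ large. Low-frequency truncation at level $c_2k_n$ is the unifying device in both directions.

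For the forward direction, assume $f_n=\sum_j\theta_{nj}\phi_j$ is consistent with $cn^{-r}\le\|f_n\|\le Cn^{-r}$. Theorem \ref{tq3} supplies constants $c_1,c_2$ with $\sum_{|j|<c_2k_n}|\theta_{nj}|^2\ge c_1 n^{-2r}$ for all large $n$. Set $f_{1n}:=\sum_{|j|<c_2k_n}\theta_{nj}\phi_j$, the orthogonal projection of $f_n$ onto the first $O(k_n)$ basis elements. Then $f_n-f_{1n}$ has disjoint Fourier support from $f_{1n}$, so the two are orthogonal and (\ref{ma1}) is Pythagoras. The bounds $c_1^{1/2}n^{-r}\le\|f_{1n}\|\le\|f_n\|\le Cn^{-r}$ are immediate. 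Finally, for $\lambda<c_2k_n$ one has $\lambda^{2s}\sum_{j>\lambda}|\theta_{1n,j}|^2\le (c_2k_n)^{2s}\|f_n\|^2$, which is $O(1)$ by $k_n^{2s}\asymp n^{2r}$ and $\|f_n\|\le Cn^{-r}$; for $\lambda\ge c_2k_n$ the sum vanishes. Hence $f_{1n}\in\mathbb{\bar B}^s_{2\infty}(P_0)$ for some $P_0$ not depending on $n$.

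For the converse, suppose $f_{1n}\in\mathbb{\bar B}^s_{2\infty}(P_0)$ with $c_1n^{-r}\le\|f_{1n}\|\le C_1n^{-r}$ and $f_n=f_{1n}+f_{2n}$ with $\langle f_{1n},f_{2n}\rangle=0$ and $\|f_{2n}\|\le C_1n^{-r}$. By Theorem \ref{tq3} it suffices to produce $c_2,c_1''>0$ and verify $\sum_{|j|<c_2k_n}|\theta_{nj}|^2\ge c_1''n^{-2r}$. Let $Q$ be the projection onto the linear span of $\{\phi_j:|j|\le c_2k_n\}$ with $c_2$ to be chosen. The Besov tail bound gives $\|(I-Q)f_{1n}\|^2\le P_0(c_2k_n)^{-2s}\le C'c_2^{-2s}n^{-2r}$, so $\|Qf_{1n}\|^2\ge c_1^2n^{-2r}-C'c_2^{-2s}n^{-2r}$. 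Global orthogonality yields $\langle Qf_{1n},Qf_{2n}\rangle=-\langle(I-Q)f_{1n},(I-Q)f_{2n}\rangle$, and Cauchy--Schwarz then bounds $|\langle Qf_{1n},Qf_{2n}\rangle|\le\|(I-Q)f_{1n}\|\,\|f_{2n}\|\le C''c_2^{-s}n^{-2r}$. Combining,
\[
\|Qf_n\|^2 \ge \|Qf_{1n}\|^2-2|\langle Qf_{1n},Qf_{2n}\rangle| \ge \bigl(c_1^2-C'c_2^{-2s}-2C''c_2^{-s}\bigr)n^{-2r},
\]
which exceeds $(c_1^2/2)n^{-2r}$ once $c_2$ is fixed sufficiently large, giving (\ref{con2}).

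I expect the cross-term estimate in the converse to be the main technical point: orthogonality of $f_{1n}$ and $f_{2n}$ is a global inner-product condition that is not preserved under the low-frequency projection $Q$, so one must use the identity relating $\langle Qf_{1n},Qf_{2n}\rangle$ to a high-frequency inner product and then absorb the resulting error against the Besov tail. The forward direction, by contrast, is essentially a clean truncation argument, with its only subtlety being the compatibility of scales $k_n^{2s}\asymp n^{2r}$.
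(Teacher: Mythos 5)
Your proposal is correct and follows essentially the same route as the paper: both directions are reduced to the criterion of Theorem \ref{tq3} via truncation at level $c_2k_n$, with the scaling $k_n^{2s}\asymp n^{2r}$ giving Besov membership of the truncated part (the paper's Lemma \ref{ld3}) and the Besov tail bound plus Cauchy--Schwarz absorbing the high-frequency contribution in the converse (the paper's Lemma \ref{ld4}). Your projection identity for the cross term is just a repackaging of the paper's coefficient-sum estimate, so the two arguments are equivalent.
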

\begin{thm}\label{tq11} Assume {\rm A1-A5}. Then,  for any $\varepsilon > 0$, for any consistent sequence of alternatives $f_n$,   $c n^{-r}\le \|f_{n}\| \le Cn^{-r}
$  there are   maxiset  $\mathbb{\bar B}^s_{2\infty}(P_0)$, $P_0 > 0$, and sequence of functions $f_{1n}$ , $c_1 n^{-r}\le \|f_{1n}\| \le C_1n^{-r}
$, belonging to maxiset $\mathbb{\bar B}^s_{2\infty}(P_0)$  such that there holds
\vskip 0.2cm
function $f_{1n}$ is orthogonal to $f_n - f_{1n}$
\vskip 0.2cm
for any $\alpha$, $0 < \alpha < 1$, for the tests $K_n$, $\alpha(K_n) = \alpha (1+ o(1))$ as $n \to \infty$, there is $n_\varepsilon$  such that,
for any $n > n_\varepsilon$, there hold
\begin{equation}
\label{uuu}
|\beta(K_n,f_n) - \beta(K_n,f_{1n})| \le \varepsilon
\end{equation}
and
\begin{equation}
\label{uu1}
\beta(K_n,f_n-f_{1n})  \ge 1 - \alpha - \varepsilon.
\end{equation}
\end{thm}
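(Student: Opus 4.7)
The plan is to take $f_{1n}$ to be the low-frequency truncation of $f_n$ at cutoff $Mk_n$, where $M$ is a large constant calibrated in terms of $\varepsilon$. Write $f_n=\sum_{j}\theta_{nj}\phi_j$. The $n^{-r}$-consistency and Theorem \ref{tq3} supply constants $c^{*}_1,\,c^{*}_2,\,n_0$ with $\sum_{|j|<c^{*}_2 k_n}|\theta_{nj}|^2>c^{*}_1 n^{-2r}$ for $n>n_0$. Fix $M\ge c^{*}_2$ and set $f_{1n}=\sum_{|j|\le Mk_n}\theta_{nj}\phi_j$. The remainder $f_n-f_{1n}=\sum_{|j|>Mk_n}\theta_{nj}\phi_j$ has disjoint frequency support from $f_{1n}$, so orthogonality and (\ref{ma1}) are immediate. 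The lower bound $\|f_{1n}\|^2\ge c^{*}_1 n^{-2r}$ is inherited from Theorem \ref{tq3}, while $\|f_{1n}\|\le\|f_n\|\le Cn^{-r}$ is trivial.

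To place $f_{1n}$ in $\mathbb{\bar B}^s_{2\infty}(P_0)$, one checks $\sup_{\lambda>0}\lambda^{2s}\sum_{|j|>\lambda}\theta_{1n,j}^2\le P_0$. The tail vanishes for $\lambda\ge Mk_n$, and for $\lambda<Mk_n$ it is bounded by $(Mk_n)^{2s}\|f_n\|^2\le (Mk_n)^{2s}C^2 n^{-2r}$. Using (\ref{u1}) ($k_n\asymp n^{2-4r}$) together with $s=r/(2-4r)$, we have $k_n^{2s}\asymp n^{2r}$, whence $(Mk_n)^{2s}n^{-2r}\asymp M^{2s}$, so $P_0=C'M^{2s}$ suffices. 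Thus the maxiset depends on $\varepsilon$ only through $M$, which the statement of the theorem permits.

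The error-probability bounds rest on the orthogonal decomposition $R_n(f_n)=R_n(f_{1n})+R_n(f_n-f_{1n})$ and on Theorem \ref{tq2}. Monotonicity A1 and the polynomial decay A4 give
\begin{equation*}
R_n(f_n-f_{1n}) \le \sigma^{-4}n^2\kappa^{2}_{n,[Mk_n]}\|f_n\|^2 \le C'' M^{-\lambda}\cdot(n^2\kappa_n^2\cdot n^{-2r}) = O(M^{-\lambda}),
\end{equation*}
using $n^2\kappa_n^2\asymp n^{2r}$, a direct consequence of (\ref{u1}). Choose $M$ so that this residual is at most $\delta$. On any subsequence along which $R_n(f_n)$ is bounded, Theorem \ref{tq2} applied to both $f_n$ and $f_{1n}$, combined with the Lipschitz continuity of $\Phi$, gives $|\beta(K_n,f_n)-\beta(K_n,f_{1n})|\le\delta/\sqrt{4\pi A_n}+o(1)$; on any subsequence with $R_n(f_n)\to\infty$ we have $R_n(f_{1n})\ge R_n(f_n)-\delta\to\infty$ as well, so both type II errors tend to $0$ (diverging noncentrality). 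Hence (\ref{uuu}) follows for $n$ large once $M$ is fixed sufficiently large in terms of $\varepsilon$ and $C_2$ from A2. Finally, Theorem \ref{tq2} applied to $f_n-f_{1n}$ yields $\beta(K_n,f_n-f_{1n})\ge\Phi(x_\alpha-\delta/\sqrt{2A_n})(1+o(1))>1-\alpha-\varepsilon$ for $\delta$ small and $n$ large, proving (\ref{uu1}). The only delicate point is the joint calibration of $M$: it must simultaneously push the residual $O(M^{-\lambda})$ below both tolerances in (\ref{uuu}) and (\ref{uu1}), while being allowed to inflate the Besov constant to $P_0=C'M^{2s}$ — but this coupling between $\varepsilon$ and $P_0$ is already implicit in the statement.
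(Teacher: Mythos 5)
Your proposal is correct and follows essentially the same route as the paper: truncate $f_n$ at frequency $ck_n$ (your $Mk_n$), use A4 together with (\ref{u1}) to make the tail noncentrality $n^2\sum_{j>ck_n}\kappa_{nj}^2\theta_{nj}^2$ arbitrarily small, place the truncation in $\mathbb{\bar B}^s_{2\infty}(P_0)$ via the computation of Lemma \ref{ld3}, and conclude (\ref{uuu}) and (\ref{uu1}) from Theorem \ref{tq2}. Your write-up merely makes explicit the steps the paper leaves terse (the Lipschitz argument for $\Phi$, the bounded versus divergent $R_n(f_n)$ subsequences, and the lower bound on $\|f_{1n}\|$ from Theorem \ref{tq3}).
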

If functions $f_n = \sum_{j=1}^\infty\, \theta_{nj} \phi_j$ satisfy $c_1\,n^{-r} \le \|f_n\| \le C_1\,n^{-r}$, then for any $c$ there is $P_0$ such that $f_{1n} = \sum_{j=1}^{[ck_n]} \theta_{nj} \phi_j \in \mathbb{\bar B}^s_{2\infty}(P_0)$   (see Lemma \ref{ld3}). Since coefficients $\kappa^2_{nj}$, $j > ck_n$, are relatively small for large  $c$, this allows to prove Theorems \ref{tq7} and \ref{tq11}.

Maxisets $\mathbb{\bar B}^s_{2\infty}(P_0)$, $P_0 > 0$ in Theorems \ref{tq7}, \ref{tq11} and \ref{tq12} can be replaced with arbitrary maxiset $ U$.
\subsection{Interaction of consistent and inconsistent sequences of alternatives. Purely consistent sequences}
\begin{thm}  \label{tq5}  Assume {\rm A1-A5}. Let sequence of alternatives $f_n$ be consistent. Then, for any inconsistent sequence of alternatives $f_{1n}$, for tests $K_n$, $\alpha(K_n) = \alpha (1 + o(1))$, $0 < \alpha <1$, generated test statistics $T_n$, there holds
\begin{equation*}
\lim_{n \to \infty} (\beta(K_n,f_n) - \beta(K_n,f_n + f_{1n})) = 0.
\end{equation*}
\end{thm}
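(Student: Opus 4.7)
The engine of the proof is the asymptotic expansion furnished by Theorem \ref{tq2}: $\beta(K_n,g) = \Phi(x_\alpha - R_n(g)/\sqrt{2A_n})(1+o(1))$, valid uniformly over sequences $g$ with $R_n(g)$ bounded. Since $R_n$ is a non-negative quadratic form in the Fourier coefficients of its argument, Cauchy--Schwarz yields the Minkowski-type bound
\[
\bigl|\sqrt{R_n(f_n+f_{1n})} - \sqrt{R_n(f_n)}\bigr| \le \sqrt{R_n(f_{1n})}.
\]
My plan is to (i) use inconsistency of $f_{1n}$ to show $R_n(f_{1n}) \to 0$, (ii) combine with the Minkowski bound to obtain $R_n(f_n+f_{1n}) - R_n(f_n) = o(1)$ in the generic regime where $R_n(f_n)$ stays bounded, and (iii) conclude by inserting into the expansion and using continuity of $\Phi$.

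For step (i) I would argue by contradiction. If $R_n(f_{1n})$ did not vanish, a subsequence would exist on which $R_n(f_{1n}) \to r \in (0,\infty]$ and, by A2, one could further extract $A_n \to a \in [C_1,C_2]$. For $r$ finite, Theorem \ref{tq2} delivers $\beta(K_n,f_{1n}) \to \Phi(x_\alpha - r/\sqrt{2a}) < 1-\alpha$, whereas for $r = \infty$ a direct Chebyshev estimate for $T_n$ (whose mean under $f_{1n}$ exceeds the rejection threshold by a factor of order $\sqrt{R_n(f_{1n})}$) yields $\beta(K_n,f_{1n}) \to 0$; either case contradicts the inconsistency condition $\liminf(\alpha(K_n) + \beta(K_n,f_{1n})) \ge 1$ applied to the tests $K_n$ generated by $T_n$.

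For step (ii), on any subsequence on which $R_n(f_n)$ is bounded --- the generic case, since a consistent sequence with $\|f_n\| \le Cn^{-r}$ combined with A1--A5 forces $R_n(f_n) \le C$ --- the estimate $|R_n(f_n+f_{1n}) - R_n(f_n)| \le 2\sqrt{R_n(f_n)R_n(f_{1n})} + R_n(f_{1n}) = o(1)$, together with boundedness of $A_n$ and uniform continuity of $\Phi$ on compact sets, gives $\beta(K_n,f_n+f_{1n}) - \beta(K_n,f_n) \to 0$. On any subsequence where $R_n(f_n) \to \infty$, the Minkowski bound forces $R_n(f_n+f_{1n}) \to \infty$ as well, so both error probabilities tend to $0$ by the Chebyshev estimate from step (i), and the difference again vanishes; a standard every-subsequence-has-a-further-subsequence argument stitches the regimes together. \textbf{Main obstacle.} The delicate part is precisely this case analysis when $R_n$ is unbounded, since the expansion (\ref{aq1}) is only guaranteed uniformly for bounded $R_n$, so the separate Chebyshev-type control of $T_n$ must be set up carefully; the rest is a mechanical consequence of the quadratic structure of $R_n$ and the continuity of $\Phi$.
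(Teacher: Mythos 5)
Your proposal is correct and follows essentially the same route as the paper: the paper's proof also applies the Cauchy--Schwarz bound $|A_n(\thetab_n)-A_n(\thetab_n+\etab_n)|\le 2A_n^{1/2}(\thetab_n)A_n^{1/2}(\etab_n)+A_n(\etab_n)$ to the quadratic functional $R_n(f)=A_n(\thetab)$, deduces $A_n(\etab_n)=o(1)$ from inconsistency via Theorem \ref{tq2}, and concludes through the uniform expansion (\ref{aq1}). The only difference is that you spell out the unbounded-$R_n(f_n)$ regime and the Chebyshev control explicitly, which the paper leaves implicit in the proof of Theorem \ref{tq2}.
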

\begin{thm}\label{tq6} Assume {\rm A1-A5}. Sequence of alternatives $f_n$, $cn^{-r} \le \|f_n\| \le Cn^{-r}$, is purely $n^{-r}$-consistent, if and only if, for any $\varepsilon >0$, there is $C_1= C_1(\varepsilon)$ such that there holds
\begin{equation}\label{con19}
\sum_{|j| > C_1k_n} |\theta_{nj}|^2 \le \varepsilon n^{-2r}
\end{equation}
for all $n> n_0(\varepsilon)$.
\end{thm}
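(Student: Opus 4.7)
The plan is to prove both implications by contraposition, using the characterizations of consistency and inconsistency from Theorems \ref{tq3}--\ref{tq4} together with a high/low frequency split relative to the threshold $Ck_n$. For $g=\sum_j\eta_j\phi_j$ and $C>0$, let $P_C g=\sum_{|j|\le Ck_n}\eta_j\phi_j$ and $Q_C g=g-P_C g$ denote the complementary orthogonal frequency-band projections.

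For the \emph{only if} direction, assume the tail condition fails: there is $\varepsilon_0>0$ such that for every integer $m$ one can find arbitrarily large $n$ with $\sum_{|j|>mk_n}|\theta_{nj}|^2>\varepsilon_0 n^{-2r}$. A diagonal extraction produces a subsequence $n_i\to\infty$ and integers $m_i\to\infty$ realizing this inequality. Setting $f_{2n_i}=Q_{m_i}f_{n_i}$ and $f_{1n_i}=P_{m_i}f_{n_i}$ gives $f_{1n_i}\perp f_{2n_i}$ by disjointness of Fourier supports, $\|f_{2n_i}\|>\sqrt{\varepsilon_0}\,n_i^{-r}$, and for any fixed $c_2$ the Fourier coefficients of $f_{2n_i}$ in the band $|j|<c_2 k_{n_i}$ vanish once $m_i>c_2$. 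Theorem \ref{tq4} then forces $f_{2n_i}$ to be inconsistent, contradicting the purely $n^{-r}$-consistency of $f_n$.

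For the \emph{if} direction, assume the tail condition. Combined with $\|f_n\|\ge cn^{-r}$ it yields $\sum_{|j|\le C_1k_n}|\theta_{nj}|^2\ge (c^2-\varepsilon)n^{-2r}$, so Theorem \ref{tq3} already yields consistency of $f_n$. Suppose for contradiction there is a subsequence $f_{n_i}=f_{1n_i}+f_{2n_i}$ with $f_{1n_i}\perp f_{2n_i}$, $\|f_{2n_i}\|>c_1 n_i^{-r}$, and $f_{2n_i}$ inconsistent; orthogonality gives $\|f_{1n_i}\|\le\|f_{n_i}\|\le Cn_i^{-r}$. Fix $\varepsilon=c_1^2/4$ and let $C_1=C_1(\varepsilon)$ come from the tail hypothesis, so $\|Q_{C_1}f_{n_i}\|^2\le\varepsilon n_i^{-2r}$. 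Applying Theorem \ref{tq4} to $f_{2n_i}$ with $c_2=C_1+1$ gives $\|P_{C_1}f_{2n_i}\|^2=o(n_i^{-2r})$, so $\|Q_{C_1}f_{2n_i}\|^2>c_1^2 n_i^{-2r}(1-o(1))$. The global identity $\langle f_{1n_i},f_{2n_i}\rangle=0$ splits as $\langle P_{C_1}f_{1n_i},P_{C_1}f_{2n_i}\rangle+\langle Q_{C_1}f_{1n_i},Q_{C_1}f_{2n_i}\rangle=0$, and Cauchy--Schwarz bounds the first summand by $\|f_{1n_i}\|\cdot\|P_{C_1}f_{2n_i}\|=O(n_i^{-r})\cdot o(n_i^{-r})=o(n_i^{-2r})$, hence the same bound holds for $|\langle Q_{C_1}f_{1n_i},Q_{C_1}f_{2n_i}\rangle|$. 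Expanding $\|Q_{C_1}f_{n_i}\|^2=\|Q_{C_1}f_{1n_i}\|^2+2\langle Q_{C_1}f_{1n_i},Q_{C_1}f_{2n_i}\rangle+\|Q_{C_1}f_{2n_i}\|^2$ and discarding the nonnegative first term yields $\|Q_{C_1}f_{2n_i}\|^2\le \|Q_{C_1}f_{n_i}\|^2+2|\langle Q_{C_1}f_{1n_i},Q_{C_1}f_{2n_i}\rangle|\le c_1^2 n_i^{-2r}/4+o(n_i^{-2r})$, contradicting the lower bound above.

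The principal obstacle is the reverse implication: the hypothesis $f_{1n_i}\perp f_{2n_i}$ is only global $L_2$-orthogonality and does not automatically decouple across the high/low frequency split. The argument above converts this into a usable bound on the high-frequency cross term by exploiting the smallness of the low-frequency mass of the inconsistent component $f_{2n_i}$ (from Theorem \ref{tq4}) together with the a priori boundedness $\|f_{1n_i}\|=O(n_i^{-r})$; this is where all the assumptions are used, and any weakening of either would break the estimate.
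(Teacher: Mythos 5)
Your proof is correct and follows essentially the same route as the paper: both directions rest on the characterizations in Theorems \ref{tq3}--\ref{tq4}, a frequency split at a threshold of order $k_n$, and Cauchy--Schwarz control of the cross terms under the orthogonality hypothesis. The only differences are cosmetic: in the sufficiency direction you fix a single threshold $C_1(\varepsilon)$ with $\varepsilon=c_1^2/4$ where the paper diagonalizes with $C_i\to\infty$, $\varepsilon_i\to 0$, and in the necessity direction you invoke Theorem \ref{tq4} where the paper applies Theorem \ref{tq2} together with A4 and (\ref{u1}), which is how Theorem \ref{tq4} is proved anyway.
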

\begin{thm}\label{tq12} Assume {\rm A1-A5}. Then sequence $f_n$, $c n^{-r}\le \|f_{n}\| \le C n^{-r}
$, is  purely $n^{-r}$-consistent, if and only if, for any $\varepsilon > 0$,  there is $\gamma_\epsilon$ and sequence of functions $f_{1n}$  belonging to maxiset $\mathbb{\bar B}^s_{2\infty}(\gamma_\epsilon)$ such that  $\|f_n - f_{1n}\| \le \varepsilon n^{-r}$ and (\ref{ma1}) holds.
\end{thm}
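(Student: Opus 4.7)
The plan is to use Theorem \ref{tq6} as the bridge between the abstract notion of pure $n^{-r}$-consistency and a concrete tail condition on the Fourier coefficients $\theta_{nj}$ of $f_n=\sum_{j}\theta_{nj}\phi_j$. Recall the scaling $s=r/(2-4r)$, which by (\ref{u1}) gives $k_n^{-2s}\asymp n^{-2r}$; this single identity powers the equivalence. The argument splits into the two implications.

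\emph{Necessity.} Assume $f_n$ is purely $n^{-r}$-consistent. By Theorem \ref{tq6}, for every $\varepsilon>0$ there exists $C_1(\varepsilon)$ such that
\begin{equation*}
\sum_{|j|>C_1(\varepsilon)k_n}|\theta_{nj}|^2\le\varepsilon^{2}n^{-2r}
\end{equation*}
for all $n$ large. Set $f_{1n}:=\sum_{|j|\le C_1(\varepsilon)k_n}\theta_{nj}\phi_j$. Since the supports of the Fourier expansions of $f_{1n}$ and $f_n-f_{1n}$ are disjoint, they are automatically orthogonal in $\mathbb{L}_2$, so (\ref{ma1}) holds, and directly $\|f_n-f_{1n}\|\le\varepsilon n^{-r}$. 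Membership $f_{1n}\in\bar{\mathbb{B}}^s_{2\infty}(\gamma_\varepsilon)$ for some $\gamma_\varepsilon=\gamma_\varepsilon(C_1(\varepsilon))$ is exactly the content of Lemma \ref{ld3}, which bounds the Besov seminorm of a low-frequency truncation at $[ck_n]$ in terms of $\|f_n\|$ and $c$.

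\emph{Sufficiency.} Conversely, assume the maxiset approximation property. Denote by $\theta^{(1)}_{nj}$ the Fourier coefficients of $f_{1n}$. For any $C_1>0$, the triangle inequality in $\mathbb{L}_2$ (applied coordinatewise on the tail) gives
\begin{equation*}
\Bigl(\sum_{|j|>C_1k_n}|\theta_{nj}|^2\Bigr)^{1/2}\le\Bigl(\sum_{|j|>C_1k_n}|\theta^{(1)}_{nj}|^2\Bigr)^{1/2}+\|f_n-f_{1n}\|\le\sqrt{\gamma_\varepsilon}\,(C_1k_n)^{-s}+\varepsilon n^{-r},
\end{equation*}
using the Besov ball inequality $\sum_{|j|>\lambda}|\theta^{(1)}_{nj}|^2\le\gamma_\varepsilon\lambda^{-2s}$ with $\lambda=C_1k_n$. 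Squaring and invoking $k_n^{-2s}\asymp n^{-2r}$ yields
\begin{equation*}
\sum_{|j|>C_1k_n}|\theta_{nj}|^2\le C\gamma_\varepsilon C_1^{-2s}\,n^{-2r}+2\varepsilon^{2}n^{-2r}.
\end{equation*}
Given arbitrary $\eta>0$, first pick $\varepsilon=\sqrt{\eta/4}$ (which fixes $\gamma_\varepsilon$), then choose $C_1=C_1(\eta)$ large enough that $C\gamma_\varepsilon C_1^{-2s}\le\eta/2$. The resulting bound $\sum_{|j|>C_1(\eta)k_n}|\theta_{nj}|^2\le\eta n^{-2r}$ is exactly the hypothesis of Theorem \ref{tq6}, so $f_n$ is purely $n^{-r}$-consistent.

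The only delicate point is not any single inequality but the correct coupling of the two small parameters: $\varepsilon$ controls the approximation error $\|f_n-f_{1n}\|$ directly, while the Besov radius $\gamma_\varepsilon$ blows up as $\varepsilon\downarrow0$, so one must fix $\varepsilon$ first to freeze $\gamma_\varepsilon$ and then select $C_1$ large (exploiting the $C_1^{-2s}$ gain from the Besov tail). The scaling identity $k_n^{-2s}\asymp n^{-2r}$, already encoded in the choice $s=r/(2-4r)$, is what makes these two contributions commensurate on the $n^{-2r}$ scale and is the main structural ingredient taken over from Theorems \ref{tq1} and \ref{tq6}.
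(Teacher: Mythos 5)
Your proof is correct and follows essentially the paper's route: for necessity you take exactly the paper's construction, truncating at $C_1(\varepsilon)k_n$ via Theorem \ref{tq6} and invoking Lemma \ref{ld3} for membership in $\mathbb{\bar B}^s_{2\infty}(\gamma_\varepsilon)$, with orthogonality and (\ref{ma1}) automatic for a frequency truncation. For sufficiency the paper only says the argument is ``simple and is omitted''; your version fills it in correctly, using the Besov tail bound $\sum_{|j|>C_1k_n}|\theta^{(1)}_{nj}|^2\le\gamma_\varepsilon(C_1k_n)^{-2s}$, the scaling $k_n^{-2s}\asymp n^{-2r}$, and the right order of quantifiers (fix $\varepsilon$ to freeze $\gamma_\varepsilon$, then take $C_1$ large) to verify the hypothesis of Theorem \ref{tq6}.
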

\begin{thm}\label{tq8} Assume {\rm A1-A5}. Then sequence of alternatives $f_n$, $cn^{-r} < \|f_n\| < Cn^{-r}$, is purely $n^{-r}$-consistent, if and only if, for any  inconsistent subsequence of alternatives $f_{1n_i}$, ,  $cn_i^{-r} < \|f_{1n_i}\| < Cn_i^{-r}$, there holds
\begin{equation}\label{ma2}
\|f_{n_i} + f_{1n_i}\|^2 =  \|f_{n_i} \|^2 + \| f_{1n_i}\|^2 + o(n_i^{-r}),
\end{equation}
where $n_i \to \infty$ as  $i \to \infty$.
\end{thm}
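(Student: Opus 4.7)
My plan reads (\ref{ma2}) as the statement that the cross term $2\langle f_{n_i}, f_{1n_i}\rangle$ is negligible at the natural scale $o(n_i^{-2r})$. (The literal $o(n_i^{-r})$ as printed is already automatic from Cauchy--Schwarz, since $|\langle f_{n_i}, f_{1n_i}\rangle| \le \|f_{n_i}\|\,\|f_{1n_i}\| = O(n_i^{-2r}) \subset o(n_i^{-r})$, so it cannot characterize any nontrivial property; I take this as a typographical issue and prove what is actually needed.) With this reading, both directions reduce to combining the Fourier-coefficient descriptions of pure consistency (Theorem \ref{tq6}) and of inconsistency (Theorem \ref{tq4}), which are precisely complementary concentration statements about the coefficients on $|j| \lesssim k_n$.

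For the forward direction, suppose $f_n$ is purely $n^{-r}$-consistent and let $f_{1n_i}$ be any inconsistent sequence with $cn_i^{-r} < \|f_{1n_i}\| < Cn_i^{-r}$. Fix $\varepsilon > 0$ and use Theorem \ref{tq6} to pick $C_1 = C_1(\varepsilon)$ with $\sum_{|j|>C_1 k_{n_i}}|\theta_{n_i,j}|^2 \le \varepsilon n_i^{-2r}$; by Theorem \ref{tq4} applied with $c_2 = C_1$ we also have $\sum_{|j|<C_1 k_{n_i}}|\theta_{1n_i,j}|^2 = o(n_i^{-2r})$. Split $\langle f_{n_i}, f_{1n_i}\rangle$ at the frequency threshold $|j| = C_1 k_{n_i}$ and apply Cauchy--Schwarz separately on each block: the low-frequency block is bounded by $\|f_{n_i}\| \cdot o(n_i^{-r}) = o(n_i^{-2r})$, and the high-frequency block by $\sqrt{\varepsilon}\,n_i^{-r} \cdot \|f_{1n_i}\| = O(\sqrt{\varepsilon}\,n_i^{-2r})$. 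Sending $\varepsilon \to 0$ yields $\langle f_{n_i}, f_{1n_i}\rangle = o(n_i^{-2r})$, which is (\ref{ma2}).

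For the converse, I argue contrapositively. If $f_n$ fails to be purely $n^{-r}$-consistent, extract a subsequence and an orthogonal decomposition $f_{n_i} = g_{1n_i} + g_{2n_i}$ with $g_{2n_i}$ inconsistent and $\|g_{2n_i}\| > c_1 n_i^{-r}$. Orthogonality forces $\|g_{2n_i}\|^2 \le \|f_{n_i}\|^2 < C^2 n_i^{-2r}$, so the choice $f_{1n_i} := g_{2n_i}$ qualifies for the hypothesis. Then by orthogonality $\langle f_{n_i}, g_{2n_i}\rangle = \langle g_{1n_i}+g_{2n_i}, g_{2n_i}\rangle = \|g_{2n_i}\|^2 \ge c_1^2 n_i^{-2r}$, contradicting the assumed $o(n_i^{-2r})$ bound. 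The main obstacle is thus the interpretive one noted above; once that is resolved, the argument is a two-scale Cauchy--Schwarz split in one direction and a single orthogonality identity in the other, with Theorems \ref{tq4} and \ref{tq6} supplying the required coefficient concentration on complementary frequency ranges.
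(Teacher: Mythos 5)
Your reading of (\ref{ma2}) as $o(n_i^{-2r})$ is the right one: as printed, the remainder $o(n_i^{-r})$ is automatic from Cauchy--Schwarz (both norms are $O(n_i^{-r})$), so the condition would be vacuous, and the paper's own use of it, like condition (\ref{con19}) in Theorem \ref{tq6}, only makes sense at the scale $n_i^{-2r}$. Your forward direction is correct and supplies exactly the detail the paper dismisses as ``rather evident'': Theorem \ref{tq6} for $f_n$, Theorem \ref{tq4} (in its obvious subsequential form) for the inconsistent sequence, and a two-block Cauchy--Schwarz split at $|j|=C_1(\varepsilon)k_{n_i}$.

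The gap is in your converse. In this paper ``purely $n^{-r}$-consistent'' is defined only for sequences that are already $n^{-r}$-consistent, so its negation is a disjunction: either $f_n$ is not consistent at all, or there is a subsequence admitting an orthogonal decomposition with an inconsistent component of norm $\ge c_1 n_i^{-r}$. Your contrapositive handles only the second disjunct; when $f_n$ merely fails to be consistent, the decomposition you ``extract'' is not provided by the definition, and your orthogonality identity has nothing to act on. The case is not empty: condition (\ref{ma2}) must also force consistency of $f_n$, and that requires an argument. It can be supplied with the paper's machinery: if $f_n$ is not consistent, negating Theorem \ref{tq3} and using a diagonal extraction gives a subsequence $n_i$ along which $\sum_{|j|<c_2 k_{n_i}}|\theta_{n_i j}|^2 = o(n_i^{-2r})$ for every fixed $c_2$, so by Theorem \ref{tq4} the subsequence $f_{n_i}$ is itself inconsistent; taking $f_{1n_i}=f_{n_i}$ makes the cross term equal to $\|f_{n_i}\|^2 \ge c^2 n_i^{-2r}$, contradicting (\ref{ma2}). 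Note that the paper's proof of this direction runs instead through the analytic characterization of pure consistency in Theorem \ref{tq6}: negating (\ref{con19}) produces $c_{n_i}\to\infty$ with $\sum_{j>c_{n_i}k_{n_i}}\theta_{n_i j}^2 \ge c_1 n_i^{-2r}$, and the high-frequency tail $f_{1n_i}=\sum_{j>c_{n_i}k_{n_i}}\theta_{n_i j}\phi_j$ serves as the inconsistent sequence violating (\ref{ma2}); that route covers both disjuncts at once, and if you base your converse on Theorem \ref{tq6} rather than on the bare definition, your gap disappears.
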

\begin{remark}\label{rem1}{\rm Let $\kappa_{nj}^2 > 0$ for $j \le l_n$  and  let $\kappa^2_{nj} = 0$ for $j > l_n$ with $l_n \asymp n^{2-4r}$ as $n \to \infty$.
Analysis of  proofs of Theorems    shows that Theorems \ref{tq3} - \ref{tq8}  remain valid for this setup  if A4 and A5 are replaced with
\vskip 0.25cm
\noindent{\bf A6.} For any $c$, $0 <c <1$, there is $c_1$  such that $\kappa^2_{n,[cl_n]} \ge c_1 \kappa^2_{n1}$ for all $n$.
\vskip 0.25cm
In all corresponding  reasoning we should put $\kappa^2_n = \kappa_{n1}^2$ and $k_n = l_n$.

Theorems   \ref{tq4} and \ref{tq6} hold with the following changes. It suffices to put $c_2 < 1$  in Theorem \ref{tq4} and to take $C_1(\epsilon) < 1$ in Theorem \ref{tq6}.

Proof of corresponding versions of Theorems \ref{tq3} - \ref{tq8} is obtained by simplification of provided reasoning and is omitted.}
\end{remark}
\section{Kernel-based tests \label{sec5}}
We  continue to explore  problem (\ref{i29}) and (\ref{i30})  of signal detection in Gaussian white noise  with $0< r < 1/2$. We suppose additionally that signal $f$ belongs to $\mathbb{L}_2^{per}(\mathbb{R}^1)$ the set of 1-periodic functions such that $f(t) \in \mathbb{L}_2(0,1)$. This allows to extend our model on real line $\mathbb{R}^1$ putting $w(t+j) = w(t)$ for all integer $j$ and $t \in [0,1)$ and  to write the forthcoming integrals over all real line.

Define kernel estimator
\begin{equation}\label{yy}
\hat{f}_n(t) = \frac{1}{h_n} \int_{-\infty}^{\infty} K\Bigl(\frac{t-u}{h_n}\Bigr)\, d\,Y_n(u), \quad t \in (0,1),
\end{equation}
where $h_n > 0$, $h_n \to 0$ as $n \to \infty$.

The kernel $K$ is bounded function such that the support of $K$ is contained in $[-1,1]$, $K(t) = K(-t)$ for $t \in \mathbb{R}^1$ and $\int_{-\infty}^\infty K(t)\,dt = 1$.

Denote $K_h(t) = \frac{1}{h} K\Bigl(\frac{t}{h}\Bigr)$, $t \in \mathbb{R}^1$ and $h >0$.

In (\ref{yy}) we supposed that, for any $v$, $0 <v  < 1$, we have
\begin{equation*}
 \int_{1}^{1+v} K_{h_n}(t-u)\,dY_n(u) = \int_{0}^{v} K_{h_n}(t-1-u)\,f(u)\,du
 + \frac{\sigma}{\sqrt{n}} \int_{0}^{v} K_{h_n}(t-1-u)\, dw(u)
\end{equation*}
and
\begin{equation*}
 \int_{-v}^{0} K_{h_n}(t-u)\,dY_n(u) =  \int_{1-v}^{1} K_{h_n}(t-u+1)\,f(u)\,du
 + \frac{\sigma}{\sqrt{n} } \int_{1-v}^{1} K_{h_n}(t-u+1)\, dw(u).
\end{equation*}
Define kernel-based   test statistics $$T_n(Y_n) = T_{nh_n}(Y_n) =nh_n^{1/2}\sigma^{-2}\gamma^{-1} (\|\hat f_{n}\|^2- \sigma^2(nh_n)^{-1}\|K\|^2),$$  where
$$
\gamma^2 = 2 \int_{-\infty}^\infty \Bigl(\int_{-\infty}^\infty K(t-s)K(s) ds\Bigr)^2\,dt.
$$
We call sequence of alternatives $f_n$, $cn^{-r} \le \|f_n\| \le Cn^{-r}$, $n^{-r}$-{\sl consistent} if, there is constant $c_1$ such that (\ref{vas1}) holds for any tests $K_n$, $\alpha(K_n) = \alpha\,(1 + o(1))$. $0 < \alpha <1$, generated sequence of test statistics $T_n$ with  $h_n < c_1 n^{4r-2}$, $h_n \asymp n^{4r-2}$.

 We call sequence of alternatives $f_n$, $cn^{-r} \le \|f_n\| \le Cn^{-r}$, $n^{-r}$-{\sl inconsistent} if sequence of alternatives $f_n$ is  inconsistent for any tests generated arbitrary test statistics $T_n$ with $h_n \to 0$ as $n \to \infty$.

Problem will be explored in terms of sequence model.

Let we observe a realization of random process $Y_n(t)$ with $f= f_n$.

 For $-\infty < j < \infty$, denote
$$
\hat K(jh) = \int_{-1}^1 \exp\{2\pi ijt\}\,K_h(t)\, dt,\quad h > 0,
$$
$$
y_{nj} = \int_0^1 \exp\{2\pi ijt\}\, dY_n(t),
\quad
\xi_j = \int_0^1 \exp\{2\pi ijt\}\, dw(t),
$$
$$
\theta_{nj} = \int_0^1 \exp\{2\pi ijt\}\, f_n(t)\, dt.
$$
In this notation we can write kernel estimator in the following form
\begin{equation}\label{au1}
\hat \theta_{nj} = \hat K(jh_n)\, y_{nj} =
\hat K(jh_n)\, \theta_{nj} + \sigma\, n^{-1/2}\, \hat K(jh_n)\, \xi_j, \quad -\infty < j < \infty,
\end{equation}
and test statistics $T_n$ admit the following representation
\begin{equation}\label{au111}
T_n(Y_n) = nh_n^{1/2}\sigma^{-2}\gamma^{-1} \Bigl(\sum_{j=-\infty}^\infty |\hat \theta_{nj}|^2  -  n^{-1}\sigma^2 \sum_{j=-\infty}^\infty |\hat K(jh_n)|^2\Bigr).
\end{equation}
If we put $|\hat K(jh_n)|^2 = \kappa^2_{nj}$, we get that definitions of test statistics $T_n(Y_n)$ in this section and in sections   \ref{sec4} are almost coincide. The setup of section  \ref{sec5} differs from setup of section \ref{sec4} only heteroscedastic white noise. Another difference  in the setup is that the function $\hat K(\omega)$, $\omega \in \mathbb{R}^1$, may have zeros.  Since  differences are insignificant the same results are valid.
Denote $k_n = [n^{2-4r}]$.

\begin{thm}\label{tk3}  The statements of Theorems \ref{tq3}, \ref{tq4}, \ref{tq7}-\ref{tq8} hold for this setup as well. The statement of Theorem \ref{tq1} holds also with $ \mathbb{\bar B}^s_{2\infty}$  replaced with $\mathbb{B}^s_{2\infty}$.
\end{thm}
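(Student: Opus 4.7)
The approach is to reduce to the quadratic-form setting of Section \ref{sec4} through the identification $\kappa^2_{nj}=|\hat K(jh_n)|^2$ that is recorded just before the theorem. With this identification and the complex exponential basis $\phi_j(t)=\exp\{2\pi ijt\}$, the sequence model becomes $y_{nj}=\theta_{nj}+\sigma n^{-1/2}\xi_j$, $j\in\mathbb{Z}$, which is exactly of the form (\ref{q2}), and the statistic in (\ref{au111}) is the Section \ref{sec4} quadratic form multiplied by the deterministic normalizer $nh_n^{1/2}\sigma^{-2}\gamma^{-1}$. The rejection threshold scales consistently with this normalizer, so the asymptotic power calculations of Theorems \ref{tq2}--\ref{tq8} will carry over once the scaling hypotheses on $\kappa_{nj}$ are checked.

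I would verify the hypotheses as follows. Using the Riemann-sum/Plancherel approximation
$$
\sum_{j\in\mathbb{Z}}|\hat K(jh_n)|^{2m}\asymp h_n^{-1}\int_{-\infty}^\infty|\hat K(\omega)|^{2m}d\omega, \quad m=1,2,
$$
one obtains A2 (after absorbing the prefactor $nh_n^{1/2}$) and A3 with $\rho_n\asymp n^{-2r}$ provided $h_n\asymp n^{4r-2}$; the effective cutoff $k_n$ of Section \ref{sec4} then satisfies $k_n\asymp h_n^{-1}\asymp n^{2-4r}$, matching the definition $k_n=[n^{2-4r}]$ used in the statement. The hard step is that $\hat K$ may vanish or oscillate, so $|\hat K(jh_n)|^2$ is not monotone in $|j|$ and A1, A4, A5 can fail as stated. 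This is precisely the situation covered by Remark \ref{rem1}: one passes to the single condition A6 on the effective range $|j|\le l_n\asymp h_n^{-1}$ where $|\hat K(\omega)|$ stays bounded away from zero on compact subsets of the interior of $\{\hat K\neq 0\}$ (which contains a neighborhood of $0$ because $\hat K(0)=1$), and replaces $c_2$ in Theorem \ref{tq4} and $C_1(\varepsilon)$ in Theorem \ref{tq6} by values less than unity as prescribed in the remark. The heteroscedasticity that appears if one works with $\hat\theta_{nj}$ instead of $y_{nj}$ is absorbed by invoking the heteroscedastic analogue of Theorem \ref{tq2} established in \cite{er03}.

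With these modifications in place, the proofs of Theorems \ref{tq3}, \ref{tq4}, \ref{tq7}--\ref{tq8} transfer verbatim, since all arguments there depend only on the CLT for $\sum\kappa^4_{nj}(\xi_j^2-1)$, the concentration of $\kappa^2_{nj}$ near $|j|\asymp k_n$, and the Gaussian power formula (\ref{aq1}); each of these ingredients survives under the substitution $\kappa^2_{nj}=|\hat K(jh_n)|^2$ given A2, A3 and A6. For the maxiset assertion of Theorem \ref{tq1}, the only change is that the underlying orthonormal system is the complex exponential basis rather than a generic real one, so the produced Besov body is $\mathbb{B}^s_{2\infty}(P_0)$ in place of $\mathbb{\bar B}^s_{2\infty}(P_0)$, exactly as stated in the theorem.
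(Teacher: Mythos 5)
Your overall reduction ($\kappa^2_{nj}=|\hat K(jh_n)|^2$, heteroscedastic power formula from Theorem \ref{tk2}, transfer of the Section \ref{sec4} arguments) is the same route the paper takes, but two points do not hold up as you state them. First, the appeal to Remark \ref{rem1} is not accurate: that remark assumes $\kappa^2_{nj}=0$ for $j>l_n$ and condition A6 requires $\kappa^2_{n,[cl_n]}\ge c_1\kappa^2_{n1}$ for \emph{every} $c<1$, whereas $|\hat K(jh_n)|^2$ never vanishes beyond a cutoff and $\hat K$ may have zeros inside $(0,1)$, so neither A1--A5 nor A6 are available. What the paper actually uses for the consistency direction is only the bound $|\hat K(\omega)|>c$ on a small neighbourhood $|\omega|<b$ of the origin, combined with the freedom, built into the Section \ref{sec5} definition of $n^{-r}$-consistency, to choose the bandwidth constant so that $c_2k_n<bh_n^{-1}$; then the mass in (\ref{con2}) is picked up by coefficients bounded below and Theorem \ref{tk2} applies. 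Your ``neighbourhood of $0$'' remark is the right idea, but routing it through Remark \ref{rem1} as stated would not compile into a proof.

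The genuine gap is in the maxiset assertion. As the paper stresses immediately after the theorem, property \emph{ii.} in the definition of maxisets must be verified for test statistics with \emph{arbitrary} bandwidths $h_n\to0$, not just $h_n\asymp n^{4r-2}$; saying ``the only change is the basis'' skips the new difficulty. For an arbitrary $h$ the coefficients $|\hat K(jh)|^2$ bear no fixed relation to the cutoff of the tail sequence you construct from $f\notin\mathbb{B}^s_{2\infty}$, they may vanish exactly where the tail mass sits, and the normalizer $nh^{1/2}$ changes with $h$, so the Section \ref{sec4} argument does not transfer verbatim. The paper handles this with two extra ingredients: a dyadic-block selection of $m_l$ so that a fixed fraction of the tail mass lies in $m_l\le|j|\le 2m_l$ (inequality (\ref{gqq})), and the comparison $T_{1n_l}(\tilde f_l,h_l)\ge C\,T_{1n_l}(\tilde f_l,h)$ for all $h>0$ with the matched bandwidth $h_l\asymp m_l^{-1}$ on which $|\hat K(jh_l)|$ is bounded below over the block; this yields (\ref{k101})--(\ref{k102}) and hence $n_lh^{1/2}T_{1n_l}(\tilde f_l,h)\to0$ uniformly in the bandwidth choice, which is what inconsistency against all kernel tests requires. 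Your proposal contains no substitute for this step, so the maxiset part (and likewise the inconsistency statements, which in Section \ref{sec5} also quantify over all $h_n\to0$) is not established by your argument.
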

In version of Theorem \ref{tq1},  {\sl ii.} in definition of maxisets holds for test statistics $T_n$ having arbitrary values $h_n > 0$, $h_n \to 0$ as $n \to \infty$.

Denote
$$
T_{1n}(f) = T_{1n}(f,h_n) =\int_0^1\Bigl(\frac{1}{h_n}\int K\Bigl(\frac{t-s}{h_n}\Bigr)f(s)\, ds\Bigr)^2 dt.
$$
For sequence $\rho_n >0$, define  sets
$$
\Upsilon_{nh_n}(T_{1n},\rho_n) = \{f: T_{1n}(f) > \rho_n,\, f \in  \mathbb{L}_2^{per}(\mathbb{R}^1)\}.
$$

Define sequence of kernel-based tests $K_n ={\bf 1}_{\{T_n(Y_n) \ge x_\alpha\}}$,  $0 < \alpha <1$, with $x_\alpha$  defined the equation $\alpha = 1 - \Phi(x_\alpha)$.

Proof of Theorems is based on the following Theorem \ref{tk2} on asymptotic minimaxity of kernel-based tests $K_n$ (see Theorem 2.1.1  in  \cite{er03}).
\begin{thm}\label{tk2} Let $h_n^{-1/2}n^{-1} \to 0$, $h_n \to 0$ as $n \to \infty$.
Let
\begin{equation*}
0 < \liminf_{n \to \infty}  n \rho_n h_n^{1/2} \le \limsup_{n \to \infty} n\rho_nh_n^{1/2} < \infty.
\end{equation*}
Then sequence  of kernel-based tests $K_n$, is asymptotically minimax for the sets of alternatives $\Upsilon_{nh_n}(T_{1n},\rho_n)$.
There hold $\alpha(L_n) = \alpha(1 + o(1))$ and
\begin{equation}\label{33}
\beta(K_n,f_n)  = \Phi(x_\alpha - \gamma^{-1} \sigma^{-2}n h_n^{1/2} T_{1n}(f_n))(1 + o(1))
\end{equation}
uniformly onto sequences $f_n\in \mathbb{L}_2^{per}(R^1)$ such that $n h_n^{1/2} T_{1n}(f_n) < C $.
\end{thm}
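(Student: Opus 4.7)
The plan is to reduce the statement to the sequence-model quadratic statistic of Section~\ref{sec4}, prove a uniform central limit theorem for the centered test statistic, and combine the resulting upper bound on $\beta(K_n,f_n)$ with a matching Bayesian lower bound on $\Upsilon_{nh_n}(T_{1n},\rho_n)$.

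First, I would pass to the trigonometric basis $\phi_j(t)=\exp\{2\pi i j t\}$, using the representations
$\hat\theta_{nj}=\hat K(jh_n)(\theta_{nj}+\sigma n^{-1/2}\xi_j)$ in (\ref{au1}) and (\ref{au111}). By Parseval one has $T_{1n}(f)=\sum_j|\hat K(jh_n)|^2|\theta_{nj}|^2$, and by Poisson summation $\sum_j|\hat K(jh_n)|^2=h_n^{-1}\|K\|^2$; substituting into the definition of $T_n(Y_n)$ and expanding $|\theta_{nj}+\sigma n^{-1/2}\xi_j|^2$ gives the decomposition
\begin{equation*}
T_n(Y_n)=nh_n^{1/2}\sigma^{-2}\gamma^{-1}T_{1n}(f_n)+L_n+Q_n,
\end{equation*}
with the linear and quadratic noise terms
\begin{equation*}
L_n=2h_n^{1/2}\sigma^{-1}\gamma^{-1}n^{1/2}\sum_j|\hat K(jh_n)|^2\operatorname{Re}(\bar\theta_{nj}\xi_j),\qquad Q_n=h_n^{1/2}\gamma^{-1}\sum_j|\hat K(jh_n)|^2(|\xi_j|^2-1).
\end{equation*}
This is, modulo the factors $|\hat K(jh_n)|^2$ playing the role of $\kappa_{nj}^2$ in Section~\ref{sec4}, exactly the quadratic structure of Theorem~\ref{tq2}; the only novelties are the possibly vanishing $\hat K(jh_n)$ and the fact that the ``noise coefficients'' need not satisfy A1--A5 but are instead determined by a fixed bounded compactly supported $K$.

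Next I would verify the CLT for $L_n+Q_n$ uniformly over $\{f_n:nh_n^{1/2}T_{1n}(f_n)\le C\}$. For $Q_n$, the variance equals $2h_n\gamma^{-2}\sum_j|\hat K(jh_n)|^4$; by the Riemann-sum / Poisson summation approximation
$h_n\sum_j|\hat K(jh_n)|^4=\int|\hat K(\omega)|^4\,d\omega+o(1)=\int(K*K)^2(t)\,dt+o(1)=\gamma^2/2+o(1)$,
so this variance tends to $1$. A standard Lyapunov bound, using boundedness of $\hat K$ and $\mathbf E|\xi_j|^8<\infty$, controls the fourth moment of $Q_n$ and yields $Q_n\Rightarrow\mathcal N(0,1)$. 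For $L_n$, its variance is bounded by
$4h_n\gamma^{-2}\sigma^{-2}n\sup_\omega|\hat K(\omega)|^2\cdot T_{1n}(f_n)\le C' h_n^{1/2}\cdot(nh_n^{1/2}T_{1n}(f_n))=O(h_n^{1/2})\to 0,$
so $L_n=o_P(1)$ uniformly. Combining these two facts gives (\ref{33}) as well as $\alpha(K_n)=\alpha(1+o(1))$ (the case $f_n\equiv 0$).

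Finally, for asymptotic minimaxity I would show that the risk bound (\ref{33}) cannot be improved on $\Upsilon_{nh_n}(T_{1n},\rho_n)$. Fix a sequence of competing tests $\tilde K_n$ with $\alpha(\tilde K_n)\le\alpha$. Choose a Bayesian prior $\pi_n$ supported on $\{f_n=\sum_{|j|\le c h_n^{-1}}\theta_{nj}\phi_j:T_{1n}(f_n)=\rho_n\}$ whose Fourier coefficients $\theta_{nj}$ are independent symmetric two-point variables with variances arranged so as to equalize the informativeness of each frequency (the standard ``least-favourable'' choice that saturates $\gamma$ in the variance). Under the Gaussian white-noise model the log-likelihood ratio is a quadratic form in $y_{nj}$, and a standard second-moment argument (see \cite{er03}) shows that the $L_2$ distance between the mixture and the null tends to a deterministic limit of the form $\exp\{2\gamma^{-2}\sigma^{-4}n^2h_n\rho_n^2\}-1$, which by Le Cam's inequality forces
$\liminf_n\bigl[\alpha(\tilde K_n)+\beta(\tilde K_n,\Upsilon_{nh_n})\bigr]\ge\alpha+\Phi(x_\alpha-\gamma^{-1}\sigma^{-2}nh_n^{1/2}\rho_n)+o(1).$
Comparing with (\ref{33}) taken at $T_{1n}(f_n)=\rho_n$ gives asymptotic minimaxity.

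The main obstacle will be the uniform CLT for $Q_n$ in Step~2, because the summands $|\hat K(jh_n)|^2(|\xi_j|^2-1)$ are non-identically distributed, the number of effectively contributing indices $|j|\lesssim h_n^{-1}$ grows, and $|\hat K(jh_n)|$ may vanish at isolated frequencies; one must truncate to the range where $|\hat K(\omega)|$ is bounded away from zero, handle the tail $|j|\gg h_n^{-1}$ via decay of $\hat K$, and then apply Lyapunov with a uniform bound that depends only on $\sup_\omega|\hat K(\omega)|$ and $\int|\hat K(\omega)|^4\,d\omega$. The second source of work is ensuring uniformity of the estimates over the whole class $\{nh_n^{1/2}T_{1n}(f_n)\le C\}$, which is ultimately what turns the pointwise limit into the asymptotic minimax statement.
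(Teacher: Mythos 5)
Your upper-bound analysis follows essentially the route the paper takes: the paper does not reprove Theorem \ref{tk2} but cites Theorem 2.1.1 of \cite{er03} and remarks that the only difference from Theorem \ref{tq2} is heteroscedasticity, and the appendix proof of Theorem \ref{tq2} is exactly your decomposition into a signal term, a linear noise term whose variance is $o(1)$ on the class $nh_n^{1/2}T_{1n}(f_n)\le C$, and a centered quadratic term obeying a CLT (there imported from Lemma 1 of \cite{er90}). Your variance computations (in particular $2h_n\gamma^{-2}\sum_j|\hat K(jh_n)|^4\to 1$ via $\gamma^2=2\int|\hat K(\omega)|^4\,d\omega$) are the right ingredients; minor caveats are the conjugacy constraint $\xi_{-j}=\bar\xi_j$ (so the summands are not independent over $j\in\mathbb{Z}$; apply Lyapunov to real and imaginary parts for $j\ge 0$), and that minimaxity over all of $\Upsilon_{nh_n}(T_{1n},\rho_n)$ also needs the easy Chebyshev argument for alternatives with $nh_n^{1/2}T_{1n}(f_n)\to\infty$, as in the proof of Theorem \ref{tq2}.

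The genuine gap is in your lower bound. From convergence of the second-moment ($\chi^2$) distance between the Bayes mixture and the null to $\exp\{u_n^2\}-1$, with $u_n=\gamma^{-1}\sigma^{-2}nh_n^{1/2}\rho_n$, Le Cam's inequality only yields a total-variation bound of the form $\alpha(\tilde K_n)+\beta(\tilde K_n,\Upsilon_{nh_n})\ge 1-\tfrac12\bigl(\exp\{u_n^2\}-1\bigr)^{1/2}$, and no such bound can deliver the sharp level-$\alpha$ statement $\beta\ge\Phi(x_\alpha-u_n)+o(1)$: even in an exact Gaussian shift experiment the total variation distance equals $2\Phi(u_n/2)-1$, which is at least $\Phi(x_\alpha)-\Phi(x_\alpha-u_n)$, so the TV route gives a strictly weaker conclusion except in degenerate cases. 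What the sharp minimaxity actually requires (and what Theorem 1 of \cite{er90}, on which the paper's citation chain rests, proves) is the asymptotic normality of the log-likelihood ratio of the least-favourable mixture under the null, i.e. a CLT for the quadratic form appearing in its exponent; one then applies the Neyman--Pearson lemma to the Bayes testing problem at level $\alpha$ to get the Bayes type II error $\Phi(x_\alpha-u_n)+o(1)$, which bounds the minimax type II error from below and matches (\ref{33}). Your two-point prior is a legitimate choice for this programme, but the step ``second moment $+$ Le Cam $\Rightarrow\ \Phi(x_\alpha-u_n)$'' does not hold as stated and must be replaced by the distributional limit of the likelihood ratio.
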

We have
\begin{equation}\label{z33}
T_{1n}(f_n)   = \sum_{j=-\infty}^\infty |\hat K(jh_n)|^2 |\theta_{nj}|^2.
\end{equation}
Note that the unique difference  of setups of Theorems \ref{tk2} and \ref{tq2} is heteroscedastic noise. Thus roof of Theorem \ref{tk2} can be obtained  by easy modification of the proof of Theorem \ref{tq2}.
\section{ $\chi^2$-tests \label{sec6}}
Let $X_1,\ldots,X_n$ be i.i.d.r.v.'s having c.d.f. $F \in \Im$.
Let c.d.f. $F(x)$ have a density $1 + f(x) = dF(x)/dx$, $x \in (0,1)$, $f \in L_2^{per}(0,1)$.

We explore the problem of testing hypothesis (\ref{i29}) versus alternatives (\ref{i30}) with $0< r < 1/2$ discussed in Introduction.

For any sequence $m_n$, denote  $\hat p_{nj} = \hat F_n(j/m_n) - \hat F_n((j-1)/m_n)$, $1 \le j \le m_n$.

Test statistics of $\chi^2$-tests equal
$$
T_n(\hat F_n) = n\, m_n \, \sum_{j=1}^{m_n}\, (\hat p_{nj}  - 1/m_n)^2.
$$
Let
$$ f_n = \sum_{j=-\infty}^\infty \theta_{nj} \phi_j, \quad \phi_j(x) = \exp\{2\pi i\,j\,x\,\}, \quad x \in (0,1).
$$
We call sequence of alternatives $f_n$, $cn^{-r} \le \|f_n\| \le Cn^{-r}$, $n^{-r}$-{\sl consistent}, if there is $c_1$ such that, (\ref{vas1}) holds for any tests $K_n$, $\alpha(K_n) = \alpha\,(1 + o(1))$. $0 < \alpha <1$, generated sequence of chi-squared test statistics $T_n$ with number of cells $m_n > c_1 n^{2-4r}$, $m_n \asymp n^{2-4r}$.

We call sequence of alternatives $f_n$, $cn^{-r} \le \|f_n\| \le Cn^{-r}$, $n^{-r}$-{\sl inconsistent} if sequence of alternatives $f_n$ is  inconsistent for all tests generated  test statistics $T_n$ having number of cells $m_n$, $m_n \to \infty$  as $n \to \infty$..

Denote $k_n = \Bigl[n^\frac{2}{1 + 4s}\Bigr]\asymp n^{2-4r}$.

The differences in versions of Theorems \ref{tq3} --\ref{tq8} for this setup are caused only the requirement that functions $f_n$, $f_{1n}$ and $f_{2n}$ should be densities.
\begin{thm}\label{tchi3}  The statements of Theorems \ref{tq3}, \ref{tq4} and \ref{tq1}-\ref{tq11}, \ref{tq6}-\ref{tq8} hold for this setup with the following differences.

 In version of Theorem \ref{tq1} balls $ \mathbb{\bar B}^s_{2\infty}$  is replaced with bodies $\mathbb{\tilde B}^s_{2\infty}$.

 In version  of  Theorem \ref{tq1},   {\sl ii.} in definition of maxisets holds for test statistics $T_n$ with arbitrary choice of number of cells $m_n$, $m_n \to \infty$ as $n \to \infty$.

In version of Theorem \ref{tq11}  we consider only sequences of alternatives $f_n$ such that the following assumption holds.

{\bf B.} There is $c_0$ such that, for all $c > c_0$, functions
$$
1 + f_{cn} =  1 + \sum_{|j| > c m_n} \theta_j \phi_j \quad \mbox{and} \quad 1 + f_n - f_{cn} =  1 + \sum_{|j| < c m_n} \theta_j \phi_j
$$
are densities.

We implement definition of purely consistent sequences only for sequences $f_n$ satisfying {\rm B}.
\end{thm}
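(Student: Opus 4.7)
The plan is to reduce the $\chi^2$-testing framework to the abstract quadratic setup of Section \ref{sec4} and then invoke Theorems \ref{tq3}--\ref{tq8} in the degenerate form given by Remark \ref{rem1}, handling the density constraint via restriction to $\mathbb{\tilde B}^s_{2\infty}$ and to sequences satisfying \textbf{B}.

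First, I would diagonalize the chi-squared statistic in the trigonometric basis. Writing $I_j = ((j-1)/m_n, j/m_n]$ and using Parseval with the orthonormal step-function basis $\{\sqrt{m_n}\mathbf{1}_{I_j}\}$, the mean excess under the alternative is $n\|P_{m_n} f_n\|^2$, where $P_{m_n}$ projects onto the step-function subspace. A Dirichlet-kernel calculation gives $\|P_{m_n}\phi_l\|^2 = \operatorname{sinc}^2(l/m_n)$, so that up to aliasing effects (which I would bound by a second-moment estimate using the $\mathbb{L}_2$-boundedness of $f_n$),
$$
T_n(\hat F_n) - (m_n-1) \;\approx\; n\sum_{0<|l|\le m_n/2}\kappa_{nl}^2|\hat\theta_{nl}|^2 \;-\; (\text{center}),
$$
with $\kappa_{nl}^2 = \operatorname{sinc}^2(l/m_n) \asymp 1$ on the retained band and $\kappa_{nl}^2 = 0$ outside. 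This fits the bandlimited setting of Remark \ref{rem1} with $l_n = m_n \asymp n^{2-4r}$, $\kappa_n^2 = \kappa_{n1}^2 \asymp 1$, and assumption \textbf{A6} satisfied trivially.

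Second, I would import the asymptotic-minimaxity statement for chi-squared tests from \cite{er97} (cited as Theorem \ref{chi2} in the introduction), which plays precisely the role of Theorem \ref{tq2}: it supplies $\alpha(K_n)=\alpha+o(1)$ and the Gaussian asymptotic $\beta(K_n,f_n)=\Phi(x_\alpha - R_n(f_n)(2A_n)^{-1/2})(1+o(1))$ with $R_n(f_n) = n^2\sigma^{-4}\sum_l\kappa_{nl}^2|\theta_{nl}|^2$. Given Steps 1 and 2, the arguments of Theorems \ref{tq3}, \ref{tq4}, \ref{tq1}, \ref{tq5}, \ref{tq7}, \ref{tq11}, \ref{tq6}, \ref{tq12}, \ref{tq8} then transfer essentially verbatim, with the adjustments recorded in Remark \ref{rem1}: $c_2<1$ in the inconsistency criterion of Theorem \ref{tq4} and $C_1(\varepsilon)<1$ in the purity criterion of Theorem \ref{tq6}, reflecting that the effective frequency band is exactly $|l|\le m_n/2$ rather than an unbounded tail of weights.

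Finally, the density-specific modifications. Since $1+f_n$ must be a probability density we have $\theta_{n0}=\int_0^1 f_n\,dx=0$, which forces the maxispace to be the mean-zero Besov body $\mathbb{\tilde B}^s_{2\infty}$ rather than $\mathbb{\bar B}^s_{2\infty}$. For the low-high frequency decomposition $f_n = f_{1n} + (f_n-f_{1n})$ at cutoff $[c\,m_n]$, the density-preserving assumption \textbf{B} is precisely what is needed so that both $1+f_{1n}$ and $1 + (f_n-f_{1n})$ are admissible alternatives; this in turn justifies the orthogonality and purity constructions underlying the analogs of Theorems \ref{tq11} and \ref{tq12}. The main obstacle I expect is Step 1: uniformly controlling the aliasing contributions from $|l| > m_n/2$ to $\|P_{m_n}f_n\|^2$ over the Besov-type class considered, and verifying that the discrete-cell variance correction matches $2m_n$ up to $o(m_n)$ so that the normalized test statistic converges to the standard normal limit; this is technical but not conceptually novel, as it follows the strategy behind Theorem \ref{chi2}.
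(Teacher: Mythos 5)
Your high-level architecture agrees with the paper's: use the asymptotic minimaxity result for chi-squared statistics (Theorem \ref{chi2}) to reduce everything to a deterministic comparison between the functional $T_n(F_n)$ and the Fourier energy in the band $|j|\lesssim m_n$, and handle the density constraint via $\theta_{n0}=0$, the bodies $\mathbb{\tilde B}^s_{2\infty}$, and assumption B. But the crux of the paper's proof is exactly the step you dismiss as "technical": the transition from cell indicators to trigonometric coefficients, and your sketch of it contains a genuine gap. The quadratic form $\|P_{m_n}f\|^2$ is \emph{not} diagonal in the trigonometric basis with weights $\mathrm{sinc}^2(l/m_n)$ vanishing for $|l|>m_n/2$; projection onto the step-function space aliases frequency $j$ onto $j-km_n$ for every integer $k$, producing cross terms $\theta_j\bar\theta_{j-km_n}$ (this is the exact identity of Lemma \ref{ch1}). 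Consequently there is no bandlimited reduction that "fits Remark \ref{rem1}", and the high-frequency contribution cannot be controlled by "a second-moment estimate using the $\mathbb{L}_2$-boundedness of $f_n$": the trivial bound $\|P_{m_n}\tilde f_n\|\le\|\tilde f_n\|$ gives a contribution of the same order $n^{-2r}$ as the main term, not $o(n^{-2r})$. What makes the tail negligible is the $j^{-1}(j-km_n)^{-1}$ decay in the aliasing formula, which yields the bound of Lemma \ref{lch2}, $n^{-1}m_n^{-2}T_n(\tilde F_{i_n})\le C m_n^{-1}i_n^{-1}\sum_{|j|>i_n}|\theta_j|^2$, small only because the cutoff $i_n$ is taken a large multiple of $m_n$. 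This lemma, not an $\mathbb{L}_2$ bound, is what drives the inconsistency direction (version of Theorem \ref{tq4}), the maxiset property \textsl{ii.}, and the version of Theorem \ref{tq11}.

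A second, related omission is the lower bound needed for the consistency direction (version of Theorem \ref{tq3}). You must show that the low-frequency energy $\sum_{|j|<c_2k_n}|\theta_{nj}|^2\gtrsim n^{-2r}$ survives after projection onto cells and cannot be cancelled by the projected high-frequency part. The paper does this by (a) the modulus-of-continuity estimate of Ulyanov, $\|\bar f_n-\bar h_n\|^2\le 2\omega^2(1/m_n,\bar f_n)\le C(k_n/m_n)\|\bar f_n\|^2$, which forces the choice $m_n=[c_3k_n]$ with $c_3$ large relative to $c_2$, and (b) the inner-product argument (\ref{ee10})--(\ref{ee11}) showing $|(\bar h_n,\tilde h_n)|\le\delta C^2 n^{-2r}$ with $\delta$ small, so that $\|\bar h_n+\tilde h_n\|\asymp n^{-r}$. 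Your proposal addresses neither the survival of low-frequency energy under the cell averaging nor the possible cancellation between the two projected components, and without these the appeal to Theorem \ref{chi2} gives no lower bound on $m_n^{-1/2}T_n(F_n)$. The density-specific remarks in your last step (that $\theta_{n0}=0$ forces $\mathbb{\tilde B}^s_{2\infty}$ and that B legitimizes the low/high split) are consistent with the paper, but they do not repair the analytic core described above.
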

In proof of version of Theorem  \ref{tq11} for chi-squared tests, we show that there is $C_\varepsilon = C(\varepsilon,c,C,c_0)$ such that, for densities $1 + f_{1n} =  1 + \sum_{|j| < C_\varepsilon m_n} \theta_j \phi_j$, (\ref{ma1}), (\ref{uuu}) and (\ref{con19}) hold. By Lemma \ref{ld3} given below, there is $\gamma_\varepsilon$ such that $f_{1n} \in \gamma_\varepsilon U$.

  In Theorem \ref{tchi5}, given bellow, definitions of consistency and inconsistency proposed in subsection \ref{ss2.1} are treated if simple alternatives $f_n$ are replaced with  distribution functions $F_n$ and hypothesis is  $\mathbb{H}_0 : F(x) = F_0(x) =x$, $x \in[0,1]$.
\begin{thm}  \label{tchi5}   Let sequence of alternatives   $F_n$ be  consistent. Let $F_{1n}$ be inconsistent sequence of alternatives such that   $F_{2n} = F_n(x) + F_{1n}(x) - F_0(x)$  are distribution functions. Then for tests $K_n$, $\alpha(K_n) = \alpha (1 + o(1))$, $0 < \alpha < 1$, there holds
\begin{equation*}
\lim_{n \to \infty} (\,\beta (K_n,F_n) - \beta (K_n,F_{2n})\,) = 0.
\end{equation*}
\end{thm}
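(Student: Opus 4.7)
The plan is to adapt the strategy of Theorem \ref{tq5} to the chi-squared setting where alternatives are expressed as distribution functions. The key point is that the chi-squared statistic is a quadratic form in the binned cell deviations $\theta_{nj}(F) = F(j/m_n) - F((j-1)/m_n) - 1/m_n$, and this binning map is linear in $F - F_0$; consequently the hypothesis $F_{2n} = F_n + F_{1n} - F_0$ translates to the additive identity $\theta_{nj}(F_{2n}) = \theta_{nj}(F_n) + \theta_{nj}(F_{1n})$. The rest of the argument is Cauchy--Schwarz control of the resulting cross term.

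Introduce the normalized signal $S_n(F) = n \sqrt{m_n} \sum_{j=1}^{m_n} \theta_{nj}(F)^2$. From the chi-squared analog of Theorem \ref{tq2} (Theorem \ref{chi2} referenced in the introduction), for tests $K_n$ with $\alpha(K_n) = \alpha(1+o(1))$ one has
\[
\beta(K_n,F) = \Phi\bigl(x_\alpha - c\,S_n(F)\bigr)\bigl(1+o(1)\bigr)
\]
uniformly over sequences $F$ with $S_n(F)$ bounded, and $\beta(K_n,F) \to 0$ whenever $S_n(F) \to \infty$. A direct consequence is that inconsistency of $F_{1n}$ forces $S_n(F_{1n}) \to 0$: otherwise, along a subsequence with $S_n(F_{1n}) \ge s > 0$, the level-$\alpha$ chi-squared test would satisfy $\alpha(K_n) + \beta(K_n,F_{1n}) \to \alpha + \Phi(x_\alpha - cs) < 1$, contradicting inconsistency.

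Combining the additive identity for $\theta_n(\cdot)$ with Cauchy--Schwarz,
\[
\bigl|S_n(F_{2n}) - S_n(F_n) - S_n(F_{1n})\bigr| = 2 n\sqrt{m_n}\,\bigl|\langle \theta_n(F_n),\theta_n(F_{1n})\rangle\bigr| \le 2\sqrt{S_n(F_n)\,S_n(F_{1n})}.
\]
Extract any subsequence. If $S_n(F_n)$ stays bounded along it, the right-hand side and $S_n(F_{1n})$ both vanish, so $S_n(F_{2n}) - S_n(F_n) \to 0$, and the uniform asymptotic formula together with continuity of $\Phi$ yields $\beta(K_n,F_{2n}) - \beta(K_n,F_n) \to 0$. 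If instead $S_n(F_n) \to \infty$ along the subsequence, the cross term is $o(S_n(F_n))$, hence $S_n(F_{2n}) \to \infty$, and both $\beta(K_n,F_n)$ and $\beta(K_n,F_{2n})$ tend to $0$. In either case the difference goes to zero along the subsequence, and since the subsequence was arbitrary, the claim follows.

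The main obstacle will be justifying the uniform Gaussian approximation for $\beta(K_n,F)$ simultaneously under $F_n$ and $F_{2n}$: the cell variances $p_{nj}(F)(1-p_{nj}(F))/n$ depend on $F$, so under the two alternatives the centering and scaling of $T_n$ are not exactly identical. In the local-alternative regime $\|\theta_n\|_\infty \to 0$ these $F$-dependent corrections are of lower order than $\sqrt{m_n}$ and do not affect the leading Gaussian behavior; making this uniform requires the same normal-approximation machinery that underlies Theorem \ref{chi2}, which is available.
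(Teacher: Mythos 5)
Your proposal is correct and follows essentially the same route as the paper: the paper omits the proof of Theorem \ref{tchi5}, noting it is akin to that of Theorem \ref{tq5}, which is exactly your argument — additivity of the binned deviations under $F_{2n}=F_n+F_{1n}-F_0$, Cauchy--Schwarz control of the cross term, the fact that inconsistency of $F_{1n}$ forces its normalized chi-squared signal to vanish, and the uniform Gaussian approximation of Theorem \ref{chi2}. Your explicit subsequence treatment of the case $S_n(F_n)\to\infty$ is a welcome refinement of the same idea.
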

Proof of Theorems are based on the following Theorem \ref{chi2} on asymptotic minimaxity of chi-squared tests given below. Theorem \ref{chi2} is  summary of results of Theorems 2.1 and 2.4 in \cite{er97}.

For sequence $\rho_n > 0$, define  sets of alternatives
$$
\Upsilon_n(T_n,\rho_n) = \Bigl\{\, F: T_n(F) \ge \rho_n,\, F \in \Im\, \Bigr\}.
$$

The definition of asymptotic minimaxity of tests is the same as in section \ref{sec4}.

Define the tests
$$
K_n = {\bf 1}_{\{2^{-1/2}m_n^{-1/2}(T_n(\hat F_n) - m_n + 1) > x_\alpha\}}
$$
where $x_\alpha$ is defined the equation $\alpha = 1- \Phi(x_\alpha)$.
\begin{thm}\label{chi2} Let $m_n \to \infty$, $m_n^{-1} n^2 \to \infty$ as $n \to \infty$. Let
\begin{equation*}
0 <\, \liminf_{n \to \infty}\, m_n^{-1/2}\rho_n \le \limsup_{n \to \infty}\, m_n^{-1/2} \rho_n < \, \infty.
\end{equation*}
Then $\chi^2$-tests  $K_n$, $\alpha(K_n) = \alpha + o(1)$, $0 < \alpha <1$, are asymptotically minimax for the sets of alternatives $\Upsilon_n(T_n,\rho_n)$.
There holds
\begin{equation*}
\beta(K_n,F_n) = \Phi(x_\alpha - 2^{-1/2}m_n^{-1/2}T_n(F_n))(1 +o(1))
\end{equation*}
uniformly onto sequences $F_n$ such that $ T_n(F_n) \le C m_n^{1/2} $.
\end{thm}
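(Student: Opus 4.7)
The plan has two parts: (a) derive the asymptotic level $\alpha(K_n)=\alpha+o(1)$ and the explicit formula for $\beta(K_n,F_n)$ (the upper bound), and (b) establish a matching minimax lower bound over $\Upsilon_n(T_n,\rho_n)$.

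For (a), write $q_{nj} = p_{nj} - 1/m_n$ and $\delta_{nj} = \sqrt{n}(\hat p_{nj} - p_{nj})$, so that
$$T_n(\hat F_n) = T_n(F_n) + 2 m_n \sqrt{n}\sum_{j=1}^{m_n} q_{nj}\delta_{nj} + m_n \sum_{j=1}^{m_n} \delta_{nj}^2.$$
Since $\sum_j q_{nj}=0$, the quadratic term has mean $m_n(1-\sum_j p_{nj}^2) = m_n - 1 - T_n(F_n)/n$ and variance $2m_n(1+o(1))$; a Lindeberg CLT for Pearson's statistic with $m_n\to\infty$ and $\max_j|q_{nj}|\to 0$ (a consequence of $T_n(F_n)\le Cm_n^{1/2}$ combined with $n^2/m_n\to\infty$) gives convergence of the centered and normalized quadratic term to $N(0,1)$ uniformly in $F_n$. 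The cross term is mean zero with variance $\asymp 4 T_n(F_n)$, so after division by $\sqrt{2m_n}$ it has variance $O(m_n^{-1/2})=o(1)$ and is negligible. Collecting pieces gives
$$\frac{T_n(\hat F_n) - (m_n-1)}{\sqrt{2m_n}} = \frac{T_n(F_n)}{\sqrt{2m_n}} + Z_n + o_P(1),\qquad Z_n \Rightarrow N(0,1),$$
uniformly over $F_n$ with $T_n(F_n)\le C m_n^{1/2}$. Taking $F_n = F_0$ yields $\alpha(K_n) = \alpha + o(1)$; the general case yields the stated formula $\beta(K_n,F_n) = \Phi(x_\alpha - 2^{-1/2}m_n^{-1/2}T_n(F_n))(1+o(1))$.

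For (b), I would use an Ingster-type mixture prior. Let $\xi_1,\ldots,\xi_{m_n}$ be i.i.d.\ Rademacher and define $p_\xi(x) = 1 + \tau_n \sum_{j=1}^{m_n}\xi_j\mathbf{1}_{\{x\in((j-1)/m_n,\,j/m_n]\}}$ with $\tau_n = \sqrt{\rho_n/n}$, so that $T_n(F_\xi) = \rho_n$. Under the assumptions $\rho_n \asymp m_n^{1/2}$ and $n^2/m_n\to\infty$ one has $\tau_n\to 0$, hence $p_\xi\ge 0$ for $n$ large; let $\pi_n$ be the resulting prior on $\Upsilon_n(T_n,\rho_n)$. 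For the mixture likelihood $\bar L_n = E_{\pi_n}[\prod_{i=1}^n p_\xi(X_i)]$, direct factorisation gives
$$E_0[\bar L_n^2] = E_{\xi,\xi'}\Bigl(1 + \tau_n^2 m_n^{-1}\sum_{j=1}^{m_n}\xi_j\xi_j'\Bigr)^n \longrightarrow \exp\bigl(\rho_n^2/(2m_n)\bigr),$$
since $\xi_j\xi_j'$ are again i.i.d.\ Rademacher and $m_n^{-1/2}\sum_j\xi_j\xi_j' \Rightarrow N(0,1)$. A refined moment/cumulant analysis of $\bar L_n$ under $P_0$, combined with the identity $E_0[\bar L_n]=1$, identifies $\log\bar L_n$ as asymptotically $N(-\rho_n^2/(4m_n),\,\rho_n^2/(2m_n))$, so that the Bayesian two-hypothesis problem ($P_0$ versus the mixture) reduces to detection of a Gaussian mean $\mu_n = \rho_n/\sqrt{2m_n}$. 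The standard Le Cam reduction from minimax testing to Bayes risk then implies that every test $\phi_n$ with $\alpha(\phi_n)\le\alpha+o(1)$ satisfies
$$\int \beta(\phi_n,F)\,d\pi_n(F)\;\ge\;\Phi(x_\alpha - 2^{-1/2} m_n^{-1/2}\rho_n)(1+o(1)),$$
matching the upper bound of (a) and establishing asymptotic minimaxity.

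The main technical obstacle is the required precision and uniformity of both central limit theorems. In (a), the Pearson CLT must hold uniformly over the class of alternatives with $T_n(F_n)\le Cm_n^{1/2}$; since the cell probabilities $p_{nj}$ depend on $F_n$, this is typically handled by truncation plus a moment bound exploiting $\max_j|q_{nj}|\to 0$. In (b), merely bounding the $\chi^2$-divergence $E_0[\bar L_n^2]-1$ would only give $\liminf(\alpha(\phi_n)+\beta(\phi_n,\cdot))>0$, whereas the sharp constant $\Phi(x_\alpha - 2^{-1/2}m_n^{-1/2}\rho_n)$ requires a lognormal-type expansion that identifies the exact mean and variance of $\log\bar L_n$; this is the most delicate step, and is where the explicit binomial expansion of $\log\{[(1+\tau_n)^{N_j}+(1-\tau_n)^{N_j}]/2\}$ together with a careful accounting of the $\tau_n^4$ cancellations becomes essential.
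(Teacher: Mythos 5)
The paper does not actually prove this theorem: in Section 6 it is stated that Theorem \ref{chi2} ``is summary of results of Theorems 2.1 and 2.4 in \cite{er97}'', so there is no internal argument to compare yours against. Your two-part plan --- (a) a CLT for the statistic, uniform over $\{T_n(F_n)\le Cm_n^{1/2}\}$, giving the level and the expansion of $\beta(K_n,F_n)$, and (b) a sharp lower bound via a Rademacher mixture over cells with $\tau_n=\sqrt{\rho_n/n}$ and a lognormal limit for the mixture likelihood ratio --- is exactly the right strategy and is of the same type as the cited source, and your bookkeeping that is checkable at sketch level is sound (the mean $m_n-1-T_n(F_n)/n$ of the quadratic term, negligibility of the cross term after dividing by $\sqrt{2m_n}$, $T_n(F_\xi)=\rho_n$, nonnegativity of $p_\xi$ since $\tau_n\to0$ under $m_n=o(n^2)$, and the second-moment identity $E_0[\bar L_n^2]=E_{\xi,\xi'}(1+\tau_n^2m_n^{-1}\sum_j\xi_j\xi_j')^n$).

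The genuine gaps are precisely where all the work lies, and one of them is not merely routine. First, the theorem only assumes $m_n^{-1}n^2\to\infty$, so the sparse regime $m_n\gg n$ (expected cell counts tending to $0$) is included; there a ``Lindeberg CLT for Pearson's statistic'' over cells does not apply as stated, and the standard route is either Poissonization plus de-Poissonization or the representation $T_n(\hat F_n)=m_n-1+\frac{2m_n}{n}(W-\mathbf{E}W)+\ldots$ through the collision count $W=\sum_j\binom{N_j}{2}$, with a CLT for this dependent sum; moreover $T_n(F_n)\le Cm_n^{1/2}$ does \emph{not} give $m_n\max_j|q_{nj}|\to0$ when $m_n\gg n^{2/3}$, so the claim that the variance is $2m_n(1+o(1))$ uniformly over such alternatives needs an explicit computation of the third-moment and cross-term corrections (they are $o(m_n)$ under $m_n=o(n^2)$, but this must be shown, not assumed from near-uniformity of the cells). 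Second, in (b) the asymptotic normality of $\log\bar L_n=\sum_j\log\{[(1+\tau_n)^{N_j}+(1-\tau_n)^{N_j}]/2\}$ with mean $-\rho_n^2/(4m_n)$ and variance $\rho_n^2/(2m_n)$, together with the contiguity/Neyman--Pearson step that converts it into the bound $\Phi(x_\alpha-2^{-1/2}m_n^{-1/2}\rho_n)$, is only announced; the second-moment bound you do compute cannot deliver the sharp constant, as you yourself note. Finally, a small but necessary step is missing at the end: the minimax claim concerns $\beta(K_n,\Upsilon_n(T_n,\rho_n))$, so you must also handle alternatives with $T_n(F_n)>Cm_n^{1/2}$, where the expansion of (a) is not asserted, by a separate crude (monotonicity or Chebyshev-type) bound showing their type II error is smaller. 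As it stands your text is a correct plan in the spirit of \cite{er97}, but not yet a proof.
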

Note that for implementation of Theorem \ref{chi2} to proof of Theorems       \ref{tchi3} and \ref{tchi5} we need to make a transition from indicator functions to trigonometric functions. Such a transition is realized in Appendix.
\section{ Cramer -- von Mises tests \label{sec7}}
We  consider Cramer -- von Mises test statistics as functional
$$
T^2(\hat{F}_n -F_0) = \int_0^1 (\hat{F}_n(x) - F_0(x))^2\, d F_0(x)
$$
depending on empirical distribution function $\hat F_n$.
Here $F_0(x)=x$, $x \in [0,1]$.

Denote $K_n= K_n(X_1, \ldots, X_n)$ sequence of Cramer- von Mises tests.

 A part of further results holds for setup (\ref{ax1}) and (\ref{ax2})  with $\Upsilon_n = \Upsilon_n(a) \doteq \Upsilon_n(T^2, a n^{-1})$, $a >0$.

  We say that Cramer - von Mises test is asymptotically unbiased if, for any $a > 0$, for any $\alpha$, $0 < \alpha < 1$, for tests $K_n$, $\alpha(K_n) = \alpha + o(1)$, there holds
\begin{equation}\label{dur}
 \limsup_{n \to \infty}\sup_{F \in \Upsilon_n(a)} \beta_F(K_n) < 1 -\alpha.
\end{equation}
 Nonparametric tests satisfying (\ref{dur}) are called also uniformly consistent (see Ch. 14.2 in \cite{le}).

Proof of results is based on the following Theorem \ref{tcm}.

 \begin{thm}\label{tcm} The following three statements hold.

 {\sl i.} For sequence of alternatives $F_n$, there is sequence of Cramer - von Mises tests $K_n$
 such that
 \begin{equation}\label{cru0}
 \lim_{n \to \infty} (\alpha(K_n) + \beta_{F_n}(K_n)) = 0,
 \end{equation}
 holds, if and only if, there holds
 \begin{equation} \label{cru}
 \lim_{n \to \infty} n\, T^2(F_n - F_0) = \infty.
 \end{equation}

 {\sl ii.} Cramer - von Mises tests are asymptotically unbiased.

 {\sl iii.} For any sequence of Cramer - von Mises tests $K_n$,
 \begin{equation*}
 \lim_{n \to \infty} (\alpha(K_n) + \beta_{F_n}(K_n)) \ge 1,
 \end{equation*}
 holds, iff, there holds
 \begin{equation*}
 \lim_{n \to \infty} n\, T^2(F_n - F_0) = 0.
 \end{equation*}
 \end{thm}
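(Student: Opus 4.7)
The plan is to work from the orthogonal decomposition
\[
n T^2(\hat F_n - F_0) = W_n + Z_n + \tau_n
\]
valid under $P_{F_n}$, where $W_n := n T^2(\hat F_n - F_n)$, $Z_n := 2n\int_0^1(\hat F_n(x) - F_n(x))(F_n(x) - F_0(x))\,dx$, and $\tau_n := n T^2(F_n - F_0)$. Rewriting the cross term by Fubini as $Z_n = 2\sum_i [G_n(X_i) - \mathbf{E}_{F_n} G_n(X)]$ with $G_n(y) = \int_y^1 (F_n(x) - x)\,dx$, Cauchy--Schwarz yields $\|G_n\|_\infty \le \|F_n - F_0\|$, so $\mathbf{E}_{F_n}[Z_n] = 0$ and $\mathrm{Var}_{F_n}(Z_n) \le 4\tau_n$. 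The classical Donsker theorem (valid once $\tau_n$ stays bounded, which forces $\|F_n - F_0\|_\infty \to 0$) gives $W_n \stackrel{d}{\to} \int_0^1 B^2(t)\,dt$ under $P_{F_n}$, with $B$ a standard Brownian bridge.

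This decomposition yields three regimes for $n T^2(\hat F_n - F_0)$ under $P_{F_n}$: (A) if $\tau_n \to 0$, then $Z_n \to 0$ in $\mathbb{L}_2$ and the statistic converges in distribution to $\int_0^1 B^2$, matching the null limit; (B) if $\liminf \tau_n > 0$ and $\limsup \tau_n < \infty$, the statistic converges to a non-degenerate law shifted from the null limit; (C) if $\tau_n \to \infty$, the statistic diverges with fluctuations of order $\sqrt{\tau_n}$ around $\tau_n$.

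The forward implications are immediate from this trichotomy. For i~$\Leftarrow$ (case C), the test $K_n = \mathbf{1}\{n T^2(\hat F_n - F_0) > \tau_n/2\}$ makes both error probabilities vanish by Chebyshev on $W_n + Z_n$ under $P_{F_n}$ and by tightness of the statistic under $H_0$. For ii, the standard level-$\alpha$ Cramer--von Mises test with threshold $c_{1-\alpha}/n$ (where $c_{1-\alpha}$ is the $(1-\alpha)$-quantile of $\int_0^1 B^2$) has $\alpha(K_n) \to \alpha$; on any $F \in \Upsilon_n(a)$ the shift $\tau_n \ge a$ lifts the asymptotic power strictly above $\alpha$, uniformly in $F$, and the power tends to $1$ whenever $\tau_n \to \infty$, which gives (\ref{dur}). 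For iii~$\Rightarrow$ I would argue contrapositively: if $\tau_n \not\to 0$, extract a subsequence with $\tau_{n_k} \ge c > 0$ and apply the test from ii, which has $\alpha + \beta$ bounded away from $1$ along $n_k$, contradicting the left-hand side of iii.

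The delicate parts are i~$\Rightarrow$ and iii~$\Leftarrow$, which must rule out every Cramer--von Mises test rather than only threshold ones. For iii~$\Leftarrow$ (case A) I would use the quantile coupling $X_i^{(0)} = U_i$, $X_i^{(1)} = F_n^{-1}(U_i)$, so that the two empirical CDFs become $\hat G_n$ and $\hat G_n \circ F_n$ on the same sample space. Writing
\[
\hat G_n(F_n(x)) - \hat G_n(x) = (F_n(x) - x) + \bigl[(\hat G_n - \mathrm{Id})(F_n(x)) - (\hat G_n - \mathrm{Id})(x)\bigr]
\]
and using the $\mathbb{L}_2$ modulus of continuity of the empirical process, one gets the coupled difference $|T_n^{(1)} - T_n^{(0)}| = o_p(n^{-1})$ when $\tau_n \to 0$. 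Combined with the continuity of the distribution of $\int_0^1 B^2$, this gives total-variation convergence of the laws of $T^2(\hat F_n - F_0)$ under $H_0$ and $F_n$, forcing every $K_n = \phi_n(T^2(\hat F_n - F_0))$ to satisfy $\alpha(K_n) + \beta_{F_n}(K_n) \to 1$. The parallel argument in case (B) handles i~$\Rightarrow$. Promoting the pointwise coupling to total-variation closeness of the statistic's laws --- and doing so uniformly in $F \in \Upsilon_n(a)$ for part ii --- is the main technical obstacle.
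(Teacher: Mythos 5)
Your decomposition $nT^2(\hat F_n-F_0)=W_n+Z_n+\tau_n$ with $\mathbf{E}Z_n=0$, $\mathrm{Var}(Z_n)\le 4\tau_n$, and the resulting trichotomy in $\tau_n$ correctly disposes of the easy directions (sufficiency in {\sl i.} and {\sl iii.}, which the paper simply cites from Ingster), but it leaves the heart of the theorem unproved. The paper's own logic is: prove {\sl ii.} first, and deduce the necessity parts of {\sl i.} and {\sl iii.} from it. Statement {\sl ii.} is precisely the claim you assert in one clause --- ``the shift $\tau_n\ge a$ lifts the asymptotic power strictly above $\alpha$, uniformly in $F$'' --- and your moment bounds cannot deliver it: in the critical regime $\tau_n\asymp a$ the cross term $Z_n$ has fluctuations of order $\sqrt{\tau_n}\asymp\sqrt a$, comparable to the shift itself, so Chebyshev gives no strict, uniform separation of the power from $\alpha$. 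What is needed (and what the paper supplies) is a genuinely different ingredient: after a uniform reduction to the Gaussian/Brownian-bridge model (Lemmas \ref{lc1}--\ref{lc4}, via the Hungarian construction, with uniformly bounded densities of the limit laws), the uniform strict unbiasedness is proved in Theorem \ref{th82} by Anderson's theorem \cite{an} applied to the ellipsoid $\{T\le x_\alpha\}$, combined with a subsequence/compactness argument on the shift coefficients that splits into the case where the shift energy stays at low frequencies (Anderson gives a strict, quantifiable decrease, Lemma \ref{lum2}) and the case where it escapes to high frequencies (deterministic shift of the statistic plus anti-concentration, Proposition 7.1 of \cite{lif}). Nothing in your proposal plays the role of Anderson's inequality or of this uniformity argument, and you yourself flag the uniformity as ``the main technical obstacle,'' so the proposal is a plan with the decisive step missing rather than a proof.

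Two smaller points. First, your claim that the quantile coupling plus continuity of the law of $\int_0^1 B^2$ yields \emph{total-variation} convergence of the laws of the statistic is not correct as stated: a coupling with $|T_n^{(1)}-T_n^{(0)}|=o_p(n^{-1})$ together with a bounded density of the limit gives closeness of the distribution functions (Kolmogorov distance), which is what the paper's Lemma \ref{lc1} asserts and which suffices for threshold (Cramer--von Mises) tests, but it does not give total variation for arbitrary measurable functions of the statistic. Second, in regime (B) the law of the statistic need not converge (the distribution depends on how the shift's energy spreads over frequencies), so the argument must be run along subsequences with a compactness step, exactly as in the paper's proof of Theorem \ref{th82}; your necessity argument for {\sl i.} is salvageable without {\sl ii.} (bounded shift makes the alternative statistic $O_P(1)$, so either the thresholds stay bounded and $\alpha$ is bounded away from $0$, or they diverge and $\beta\to1$), but the necessity in {\sl iii.} really does require the uniform unbiasedness you have not established.
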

 Sufficiency in {\sl i.} and {\sl iii.} in Theorem \ref{tcm} is wellknown (see  \cite{ing87}).
Necessary conditions in {\sl i.} and in {\sl iii.}  follows  easily  from {\sl ii.}

From now on we explore the problem of testing hypothesis (\ref{i29}) versus alternatives (\ref{i30}) with $0< r < 1/2$ discussed in Introduction.

 If c.d.f. $F$ has density, we can write the functional $T^2(F-F_0)$ in the following form (see Ch.5,  \cite{wel})
\begin{equation*}
T^2(F-F_0) = \int_0^1\int_0^1(\min\{s,t\} - st)\, f(t)\, f(s)\, ds\, dt
\end{equation*}
with $f(t) = d(F(t) - F_0(t))/dt$.

If we consider the orthonormal expansion of function
\begin{equation*}
f(t) =  \sum_{j=1}^\infty \theta_j \phi_j(t)
\end{equation*}
on trigonometric basis $\phi_j(t) = \sqrt{2}\, \cos(\pi j t)$, $1 \le j <\infty$, then we get
\begin{equation}\label{om3}
nT^2(F-F_0) = n\,\sum_{j=1}^\infty\frac{\theta_j^2}{\pi^2 j^2}.
\end{equation}
Denote $k_n = [n^{(1-2r)/2}]$.

In Theorems \ref{tom1}  and \ref{tom4} given below, we follow the definition of consistency provided in subsection \ref{ss2.1}.
 \begin{thm}\label{tom1} For orthonormal system of functions $\phi_j(t) = \sqrt{2}  \cos(\pi j t)$, $t \in [0,1)$, $j = 1,2, \ldots$,
the bodies  $\mathbb{\bar B}^s_{2\infty}(P_0)$ with $s = \frac{2r}{1 - 2r}$,  $r = \frac{s}{2+2s}$, are maxisets for Cramer -- von Mises test statistics.
\end{thm}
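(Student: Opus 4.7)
The plan is to combine Theorem \ref{tcm} with the Parseval-type identity (\ref{om3}), which reduces consistency (in the sense of subsection \ref{ss2.1}) of a Cram\'er--von Mises test to $\liminf_n nT^2(F_n - F_0) > 0$ and inconsistency to $\lim_n nT^2(F_n - F_0) = 0$. Both clauses i. and ii. of the maxiset definition will then follow from estimates of $n\sum_{j\ge 1}\theta_{nj}^2/(\pi^2 j^2)$ calibrated against $k_n \asymp n^{(1-2r)/2}$ through the relation $s = 2r/(1-2r)$.

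For clause i., I would take $f_n \in \gamma \mathbb{\bar B}^s_{2\infty}(P_0)$ with $cn^{-r}\le \|f_n\|\le Cn^{-r}$ and apply the Besov tail bound at scale $\lambda = Ak_n$. Since $(Ak_n)^{2s} \asymp A^{2s} n^{2r}$, choosing $A$ large (depending only on $\gamma, P_0, c$) forces $\sum_{j\le Ak_n}\theta_{nj}^2 \ge c^2 n^{-2r}/2$. Using $j^{-2}\ge (Ak_n)^{-2}$ in (\ref{om3}) yields $nT^2(F_n-F_0) \ge c'\, n^{1-2r}/k_n^2 \asymp c' > 0$, so $\liminf_n nT^2 > 0$ and consistency follows from the asymptotic unbiasedness in Theorem \ref{tcm} ii.

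For clause ii., I would start with $f = \sum_j \theta_j \phi_j \in \mathbb{L}_2(0,1) \setminus \mathbb{\bar B}^s_{2\infty}$ satisfying condition D, and any convex ortho-symmetric set $V\ni f$. Since $\sup_\lambda \lambda^{2s}\sum_{j>\lambda}\theta_j^2 = \infty$ while $\tau(\lambda)^2 := \sum_{j>\lambda}\theta_j^2 \to 0$, one can extract $\lambda_i\uparrow\infty$, $\lambda_i > l_0(f)$, such that $\tau_i^2 := \tau(\lambda_i)^2 \to 0$ and $\lambda_i^{2s}\tau_i^2 \to \infty$. Set $n_i := \lceil\tau_i^{-1/r}\rceil$ (so $n_i\to\infty$), $\beta_i := n_i^{-r}/\tau_i \in (0,1]$, and $f_{n_i} := \beta_i \sum_{j>\lambda_i}\theta_j\phi_j$. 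Then $\|f_{n_i}\| = n_i^{-r}$; the coefficient-wise bound $|\beta_i\theta_j| \le |\theta_j|$ together with ortho-symmetry and convexity places $f_{n_i}$ in $V$; and $1+f_{n_i} = (1-\beta_i)\cdot 1 + \beta_i\bigl(1 + \sum_{j>\lambda_i}\theta_j\phi_j\bigr)$ is a convex combination of two densities (the second by D and $\lambda_i > l_0$), hence a density. The key estimate
\begin{equation*}
n_i T^2(F_{n_i}-F_0) \le \frac{n_i\beta_i^2\tau_i^2}{\pi^2\lambda_i^2} = \frac{n_i^{1-2r}}{\pi^2\lambda_i^2} \asymp \frac{1}{\pi^2(\tau_i^2 \lambda_i^{2s})^{1/s}} \to 0,
\end{equation*}
using the identity $1/s = (1-2r)/(2r)$, then gives inconsistency of $f_{n_i}$ via Theorem \ref{tcm} iii.

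The principal technical subtlety is the matching of exponents: $k_n \asymp n^{(1-2r)/2}$ is the unique cutoff scale which simultaneously calibrates the norm bound $\|f_n\|\asymp n^{-r}$, the Besov tail of order $n^{-2r}$, and the $j^{-2}$-weighting in $T^2$, so that $nT^2$ balances at order unity. Once this calibration is in place, the density-preservation condition D is cheaply handled by choosing $\lambda_i > l_0(f)$ and $\beta_i \in [0,1]$, and ortho-symmetry plus convexity of $V$ is exactly what permits the truncation-and-rescaling move $f \mapsto f_{n_i}$ while staying in $V$.
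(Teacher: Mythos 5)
Your proposal is correct and follows essentially the same route as the paper: clause \textsl{i.} via the Besov tail bound at the cutoff $\asymp k_n = n^{(1-2r)/2}$ combined with the identity (\ref{om3}) and Theorem \ref{tcm}, and clause \textsl{ii.} by taking tails of $f$ at scales where $\lambda^{2s}\sum_{j>\lambda}\theta_j^2$ diverges, keeping them in $V$ by convexity and ortho-symmetry, calibrating $n_i$ so the tail norm is $\asymp n_i^{-r}$, and showing $n_iT^2 \to 0$ so that Theorem \ref{tcm} \textsl{iii.} gives inconsistency. Your extra touches (the rescaling factor $\beta_i$ and the explicit convex-combination argument that $1+f_{n_i}$ is a density under condition D) are harmless refinements of the paper's argument, not a different method.
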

In previous sections functionals $T_n$ depend on $n$. In this setup we explore the unique functional $T$ for all $n$ and for different values of $r$, $0 < r < 1/2$. To separate the study of sequences of alternatives for different $r$, we consider for fixed $r$ only sequences of alternatives satisfying G1.

{\bf G1.} For any  $\varepsilon > 0$   there is $c_\epsilon$ such that there holds
\begin{equation*}
n\sum_{|j| < c_\epsilon k_n} \theta_{nj}^2 j^{-2} < \varepsilon
\end{equation*}
for all $n  > n_0(\varepsilon,c_\epsilon)$.

If  G1 does not hold for some  $c_\epsilon = c_n \to 0$, $c_n k_n \to \infty$ as $n \to \infty$ and functions $ 1 + \bar f_n = 1 + \sum_{j < c_n k_n} \theta_{nj}\,\phi_j$ are densities, then  (\ref{vas1}) holds for some sequence of functions $\bar f_n$, $\|\, \bar f_n \| = o(n^{-r})$. Thus this case of consistency can be studied in the framework of the faster rate of convergence of sequence of alternatives.

\begin{thm}  \label{tom4} Let sequence of alternatives $f_n$ satisfies \mbox{\rm G1}. Then for sequence $f_n$ the statements of Theorems \ref{tq3}, \ref{tq4}, \ref{tq7}, \ref{tq11}, \ref{tq6} and \ref{tq8}  are valid with the following changes.

In version of Theorem \ref{tq11} it is supposed that B holds.

 In Theorem \ref{tom4} definition of pure consistency is considered for sequences of functions $f_n$ satisfying \mbox{\rm B}.
\end{thm}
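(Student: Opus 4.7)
The plan is to reduce each of the six analog statements in Theorem \ref{tom4} to the consistency/inconsistency dichotomy provided by Theorem \ref{tcm}, exploiting the Parseval-type identity \eqref{om3}: $nT^2(F_n-F_0) = n\sum_{j=1}^\infty \theta_{nj}^2/(\pi^2 j^2)$. In the $n^{-r}$-regime $cn^{-r} \le \|f_n\| \le Cn^{-r}$, combined with the asymptotic distribution theory of the Cramer--von Mises statistic underlying Theorem \ref{tcm}, the sequence $f_n$ is $n^{-r}$-consistent iff $\liminf_n n\sum_j \theta_{nj}^2/j^2 > 0$ and is $n^{-r}$-inconsistent iff $\lim_n n\sum_j \theta_{nj}^2/j^2 = 0$. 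All six analogs then become analytic assertions about this weighted sum at the scale $k_n = [n^{(1-2r)/2}]$, where $n/k_n^2 \asymp n^{2r}$ so that low and high frequencies contribute at balanced rates.

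For the analogs of Theorems \ref{tq3} and \ref{tq4}, I would perform a three-zone split of $\sum_j\theta_{nj}^2/j^2$: low ($|j| < c_\varepsilon k_n$), middle ($c_\varepsilon k_n \le |j| < ck_n$), and high ($|j| \ge ck_n$). The sufficiency direction of the Theorem \ref{tq3} analog is immediate: if $\sum_{|j|<c_2 k_n}\theta_{nj}^2 \ge c_1 n^{-2r}$, the bound $j^{-2} \ge (c_2 k_n)^{-2}$ together with $k_n^2 \asymp n^{1-2r}$ yields $nT^2(F_n-F_0) \ge C c_1/c_2^2 > 0$, hence consistency. For the necessity direction (Theorem \ref{tq4} analog), under the hypothesis $\sum_{|j|<ck_n}\theta_{nj}^2 = o(n^{-2r})$ for every $c$, the low-frequency part of $nT^2$ is $<\varepsilon$ by G1, the middle part is at most $n \cdot o(n^{-2r})/(c_\varepsilon k_n)^2 = o(1)/c_\varepsilon^2$, and the high part is at most $n\|f_n\|^2/(ck_n)^2 \le C/c^2$. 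Choosing $c$ large, then $c_\varepsilon$ from G1, then $n$ large, yields $nT^2 \to 0$ and inconsistency.

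For the analogs of Theorems \ref{tq7}, \ref{tq11}, \ref{tq6}, \ref{tq12}, \ref{tq8}, the main tool is the low-frequency truncation $f_{1n} = \sum_{|j|\le c_0 k_n}\theta_{nj}\phi_j$. Orthogonality $f_{1n} \perp (f_n - f_{1n})$ is immediate from disjoint Fourier support, and $f_{1n} \in \mathbb{\bar B}^s_{2\infty}(P_0)$ with $s = 2r/(1-2r)$ for appropriate $P_0$ follows from Lemma \ref{ld3} and Theorem \ref{tom1}, producing the decomposition required by the Theorems \ref{tq7}, \ref{tq12} analogs. For the Theorem \ref{tq11} analog, since the Cramer--von Mises statistic under $F_n$ has asymptotic behaviour governed by $\sum_j (\xi_j + \sqrt{n}\theta_{nj})^2/(\pi^2 j^2)$, the difference $|\beta(K_n,f_n) - \beta(K_n,f_{1n})|$ is controlled by the tail $n\sum_{|j|>c_0 k_n}\theta_{nj}^2/(\pi^2 j^2) \le C/c_0^2$, which is $<\varepsilon$ upon choosing $c_0$ large; the inconsistency of $f_n - f_{1n}$ required for \eqref{uu1} then follows from the Theorem \ref{tq4} analog applied to this tail. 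The purely consistent characterizations (Theorems \ref{tq6}, \ref{tq12}, \ref{tq8} analogs) collapse, via the same truncation, to the statement that no inconsistent orthogonal tail component can exist, which is exactly captured by $\sum_{|j|>C_1 k_n}\theta_{nj}^2 \le \varepsilon n^{-2r}$.

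The main obstacle will be preserving the density constraint: assumption B is imposed precisely so that the low-frequency truncation $1 + f_{1n}$ and its complement $1 + (f_n - f_{1n})$ are both genuine densities, which is essential for the decomposition to yield legitimate alternatives in the density-testing setting. A subsidiary technical point is to coordinate the three constants $c_0, c_\varepsilon, c$ so that the low contribution is bounded by G1, the high contribution by the $k_n^{-2}$ factor built into the weight $1/j^2$, and the middle contribution by the hypothesis that no mass of order $n^{-2r}$ concentrates at scales below $ck_n$ for any $c$. Once these constants are fixed, the proofs parallel those of Theorems \ref{tq3}--\ref{tq8}, with Theorem \ref{tcm} replacing Theorem \ref{tq2} as the asymptotic backbone.
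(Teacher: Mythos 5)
Your proposal is correct and follows essentially the same route as the paper: the paper proves the analogs of Theorems \ref{tq3} and \ref{tq4} by exactly your computation with the weights $1/(\pi^2 j^2)$ at the scale $k_n=[n^{(1-2r)/2}]$ via Theorem \ref{tcm}, and then declares that Theorem \ref{tcm}, G1 and B reduce the remaining analogs (\ref{tq7}, \ref{tq11}, \ref{tq6}, \ref{tq8}) to the Fourier-coefficient analysis of subsection \ref{subsec9.4}, which is precisely your truncation-at-$ck_n$ scheme with the Besov-ball lemma (Lemma \ref{lom25}, the CvM analog of Lemma \ref{ld3}). Your three-zone split in fact makes explicit the use of G1 that the paper's bound (\ref{om202}) leaves implicit; for the \ref{tq11} analog the comparison of type II error probabilities would be completed, as in the proof of Theorem \ref{tom6}, by Lemmas \ref{lc1} and \ref{lc4}.
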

\begin{thm}\label{tom6} The statement of Theorem \ref{tchi5} holds for this setup as well.
\end{thm}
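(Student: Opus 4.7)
The plan is to reduce the comparison of type II error probabilities under $F_n$ and $F_{2n}$ to a comparison of the two noncentrality parameters. Set $\lambda_n:=n\,T^2(F_n-F_0)$, $\lambda_n^{(1)}:=n\,T^2(F_{1n}-F_0)$, $\lambda_n^{(2)}:=n\,T^2(F_{2n}-F_0)$. By Theorem \ref{tcm}, consistency of $F_n$ yields $\liminf_n\lambda_n>0$ and inconsistency of $F_{1n}$ yields $\lambda_n^{(1)}\to 0$. Integration by parts with the basis $\phi_j(t)=\sqrt 2\cos(\pi jt)$ gives the expansion $T^2(G-F_0)=\sum_{j\ge 1}(\theta_j^G)^2/(\pi j)^2$ with $\theta_j^G=\int\phi_j\,dG$, valid for every c.d.f.\ $G$ on $[0,1]$. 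Since $\theta_j^{F_{2n}}=\theta_j^{F_n}+\theta_j^{F_{1n}}$, Cauchy--Schwarz yields
\begin{equation*}
\bigl|\lambda_n^{(2)}-\lambda_n-\lambda_n^{(1)}\bigr|=2\Bigl|n\sum_j\theta_j^{F_n}\theta_j^{F_{1n}}/(\pi j)^2\Bigr|\le 2\sqrt{\lambda_n\lambda_n^{(1)}}=o(1+\sqrt{\lambda_n}).
\end{equation*}
In particular, $\lambda_n^{(2)}\sim\lambda_n$ when $\lambda_n\to\infty$ and $\lambda_n^{(2)}=\lambda_n+o(1)$ when $\lambda_n$ is bounded.

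Next I would analyze the distribution of the test statistic under an alternative $F$ via $n\,T^2(\hat F_n-F_0)=\sum_j n\hat\theta_{nj}^2/(\pi j)^2$ with $\hat\theta_{nj}=n^{-1}\sum_i\phi_j(X_i)$. Writing $\eta_{nj}:=\sqrt n(\hat\theta_{nj}-\theta_j^F)$, which is centered with variance $\mathrm{Var}_F(\phi_j(X))\to 1$ uniformly as $F\to F_0$, one obtains
\begin{equation*}
n\,T^2(\hat F_n-F_0)=\lambda_n(F)+2Z_n(F)+Y_n(F),
\end{equation*}
where $\lambda_n(F)=nT^2(F-F_0)$, $Z_n(F)=\sqrt n\sum_j\theta_j^F\eta_{nj}/(\pi j)^2$ is centered with variance $\sigma_n^2(F)\le\pi^{-2}\lambda_n(F)$, and $Y_n(F)=\sum_j\eta_{nj}^2/(\pi j)^2$ converges weakly to the null CvM limit uniformly for $F$ near $F_0$. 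A Cauchy--Schwarz argument parallel to the one above gives $|\sigma_n^2(F_{2n})-\sigma_n^2(F_n)|=o(1+\lambda_n)$.

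The rest is a subsequence argument. Given any subsequence, extract a further subsequence along which either (i) $\lambda_{n_k}\to\infty$ or (ii) $\lambda_{n_k}\to\lambda\in[0,\infty)$. In case (i), the mean $\lambda_{n_k}\sim\lambda_{n_k}^{(2)}$ dominates the $O(\sqrt{\lambda_{n_k}})$ standard deviation of $2Z_n+Y_n$, so $\beta(K_{n_k},F_{n_k})\to 0$ and $\beta(K_{n_k},F_{2n_k})\to 0$. In case (ii), $\lambda_{n_k}^{(2)}\to\lambda$; a diagonal extraction (possible since $\sum_j(\sqrt n\,\theta_j^{F_{n_k}})^2/(\pi j)^2=\lambda_{n_k}$ is bounded) produces a sub-subsequence on which $\sqrt n\,\theta_j^{F_{n_k}}\to a_j$ for every fixed $j$, and the pointwise bound $\sqrt n\,|\theta_j^{F_{1n_k}}|\le\pi j\sqrt{\lambda_{n_k}^{(1)}}\to 0$ shows the same limits $a_j$ arise for $F_{2n_k}$. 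Consequently the test statistic under both $F_{n_k}$ and $F_{2n_k}$ converges in law to $\sum_j(a_j+\xi_j)^2/(\pi j)^2$ with $\xi_j$ i.i.d.\ $N(0,1)$, whence $\beta(K_{n_k},F_{n_k})$ and $\beta(K_{n_k},F_{2n_k})$ share a common limit. Thus $\beta(K_n,F_n)-\beta(K_n,F_{2n})\to 0$.

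The main obstacle will be justifying the decomposition $n\,T^2(\hat F_n-F_0)=\lambda_n(F)+2Z_n(F)+Y_n(F)$ with remainders controlled uniformly over sequences of alternatives that are bounded only in the $T^2$-seminorm and not necessarily in $\mathbb{L}_2$. This is handled by the Fourier-based CLT arguments that underlie the proof of Theorem \ref{tcm} and its extension Theorem \ref{tom4}; summability of the weights $1/(\pi j)^2$ lets one truncate at a fixed index $J$, apply a finite-dimensional CLT, and then let $J\to\infty$ after $n\to\infty$.
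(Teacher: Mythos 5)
Your proof is essentially correct, but it follows a genuinely different route from the paper. The paper reduces the statement, via the Hungarian construction (Lemma \ref{lc1}), to the Brownian-bridge picture and then compares $T\bigl(b(F_n+F_{1n}-F_0)+\sqrt n\,(F_n+F_{1n}-2F_0)\bigr)$ with $T\bigl(b(F_n)+\sqrt n\,(F_n-F_0)\bigr)$ by the triangle inequality for the norm $T$: the random part is controlled by the coupling bound $\mathbf{E}\,T^2\bigl(b(F_n+F_{1n}-F_0)-b(F_n)\bigr)\le T^{1/4}(F_{1n}-F_0)=o(1)$, the deterministic part by $\sqrt n\,T(F_{1n}-F_0)=o(1)$ (inconsistency of $F_{1n}$), and closeness of the statistics is converted into closeness of rejection probabilities by the anti-concentration Lemma \ref{lc4} (uniformly bounded densities). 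You instead work entirely in the Fourier sequence space: a Cauchy--Schwarz comparison of noncentralities, $|\lambda_n^{(2)}-\lambda_n-\lambda_n^{(1)}|\le 2\sqrt{\lambda_n\lambda_n^{(1)}}$, plus a subsequence/diagonal compactness argument with a finite-dimensional CLT and tail truncation ($\mathbf{E}\sum_{j>J}\eta_{nj}^2/(\pi j)^2\le 2\sum_{j>J}(\pi j)^{-2}$ uniformly in $F$, which does close the uniformity issue you flag at the end). Your route avoids the strong approximation and the anti-concentration lemma, since you obtain genuine convergence in law of both statistics to a common absolutely continuous limit; the paper's route is shorter only because Lemmas \ref{lc1}--\ref{lc4} are already available from the proof of Theorem \ref{tcm}. (Note, incidentally, that the paper proves Theorem \ref{tcm} by the bridge coupling, not by a Fourier CLT, so the argument you appeal to for uniformity is your own, not the paper's.)

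One step needs repair. In case (ii) the limit law is not $\sum_j(a_j+\xi_j)^2/(\pi j)^2$ in general: from $\lambda_{n_k}\to\lambda$ and coordinatewise convergence $\sqrt{n_k}\,\theta_j^{F_{n_k}}\to a_j$ one only gets $\sum_j a_j^2\le\lambda$ by Fatou, and noncentrality mass may escape to high frequencies (Theorem \ref{tom6}, unlike Theorem \ref{tom4}, does not impose G1, and such sequences are admissible and consistent by Theorem \ref{tcm} \emph{ii.}). The correct limit is $\delta+\sum_j(a_j+\xi_j)^2/(\pi j)^2$ with $\delta=\lambda-\sum_j a_j^2\ge 0$. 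Your conclusion survives, because your own estimate gives $\lambda_n^{(2)}-\lambda_n\to 0$ when $\lambda_n$ is bounded, so the escaped constant $\delta$ is the same for $F_{n_k}$ and $F_{2n_k}$ and the two statistics still converge to one and the same continuous law; but since the identification of a common limit is precisely the step from which you conclude that the two type II errors coincide asymptotically, the statement of the limit must be corrected. A smaller point: the bound $\sigma_n^2(F)\le\pi^{-2}\lambda_n(F)$ is a variance computed under $F_0$ rather than $F$; under $F$ there is a correction term $\int g_n^2\,d(F-F_0)$, which integration by parts together with $\|g_n\|_\infty\le C\lambda_n^{1/2}$ bounds by $C\lambda_n^{3/2}n^{-1/2}=o(\lambda_n^2/\lambda_n)$, so case (i) still goes through -- or one can simply invoke the sufficiency part of Theorem \ref{tcm} \emph{i.} for both $F_n$ and $F_{2n}$ there.
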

\section{$n^{-1/2}$-- rate of convergence \label{sec8}}
In section we extend results of sections \ref{sec4} -- \ref{sec7} to the case $r =1/2$.
We show that, for $r= 1/2$,   the sets
\begin{equation*}
U(l,P_0)) =  \{f : f= \sum_{j=1}^\infty \theta_j \, \phi_j,\, \| f\| \le P_0, \, f \in \mathbb{L}_2(0,1)\}
\end{equation*}
with  $l=1,2,\ldots$ and $P_0 > 0$ and  the linear space
$\Xi = \{f\,:\, f \in  U(l,P_0)$ for some integer $l \, \mbox{and}\, P_0 > 0\}$ satisfy {\it i.} and {\it ii.} respectively in definition of maxisets. Moreover sets $U(l,P_0)$ can replace with maxisets in versions of Theorems \ref{tq7}, \ref{tq11} and \ref{tq12}.

Problems of hypothesis testing in sections \ref{sec4}, \ref{sec5} and \ref{sec7} are covered  the following setup.

We observe sequence of independent random variables $y_j = \theta_j + n^{-1/2} \sigma_j\, \xi_j$ where $\xi_j$, $1 \le j < \infty$, are Gaussian random variables, $\mathbf{E} \xi_j =0$ and $\mathbf{E} [\xi_j^2] = 1$.

Define functional
\begin{equation*}
T(\thetab) = \sum_{j=1}^\infty \kappa_j^2\theta_j^2,\quad \thetab = \{\theta_j\}_1^\infty,
\end{equation*}
where coefficients $\kappa_j^2$ satisfy the following conditions.
\vskip 0.15cm
{\bf D1}. Sequence $\kappa_j^2$ is decreasing and $\sum_{j=1}^\infty \kappa_j^2 < \infty$.
\vskip 0.15cm
{\bf D2} There is $C >0$ such that $0 < \sigma_j < C$ for all $1 \le j < \infty$.
\vskip 0.15cm
Problem is to test hypothesis
\begin{equation}\label{hy81}
\mathbb{H}_0\,:\, \theta_j =0, \quad 1 \le j < \infty
\end{equation}
versus alternatives
\begin{equation}\label{hy82}
\mathbb{H}_n \,:\, \theta_j = \theta_{nj}, \quad 1 \le j < \infty,
\end{equation}
where $T(\thetab_n) \asymp n^{-1}$ with $\thetab_n = \{\theta_{nj}\}_1^\infty$.
\begin{thm}\label{th80} For $r =1/2$ sets $U(l,P_0)$, $l =1,2,\ldots$, $P_0 >0$, and linear space $\Xi$ satisfy {\it i.} and {\it ii.} respectively in definition of maxisets.
\end{thm}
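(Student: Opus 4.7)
The plan is to reduce Theorem \ref{th80} to the behaviour of the single quantity $n\,T(\thetab_n)$, via the standardized quadratic form $T_n(\yb) = \sum_{j=1}^\infty \kappa_j^2 y_j^2 - n^{-1}\sum_{j=1}^\infty \kappa_j^2 \sigma_j^2$. Assumptions D1 and D2 force $\kappa_j^2 \to 0$ and $\sum_j \kappa_j^4\sigma_j^4 < \infty$, so an argument strictly parallel to the proof of Theorem \ref{tq2} yields
\[\beta(K_n,\thetab_n) = \Phi\Bigl(x_\alpha - \frac{n\,T(\thetab_n)}{\sqrt{2\sum_j \kappa_j^4\sigma_j^4}}\Bigr)(1+o(1))\]
uniformly over sequences with $n\,T(\thetab_n)$ bounded. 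Thus the sequence $\thetab_n$ is consistent iff $\liminf_n n\,T(\thetab_n) > 0$ and inconsistent iff $n\,T(\thetab_n) \to 0$.

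For part \textit{i.}, let $\thetab_n \in \gamma U(l,P_0)$ with $c n^{-1/2} \le \|\thetab_n\| \le C n^{-1/2}$. Since $\thetab_n$ is supported on the first $l$ indices and $\kappa_j^2$ is decreasing,
\[T(\thetab_n) = \sum_{j=1}^l \kappa_j^2\theta_{nj}^2 \ge \kappa_l^2\|\thetab_n\|^2 \ge \kappa_l^2 c^2 n^{-1},\]
so $n\,T(\thetab_n) \ge \kappa_l^2 c^2 > 0$ for every $n$, and consistency follows from the previous display.

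For part \textit{ii.}, take $f = \sum_{j=1}^\infty \theta_j\phi_j \notin \Xi$; then infinitely many $\theta_j$ are nonzero, so $\|f^{(m)}\|^2 := \sum_{j \ge m}\theta_j^2 > 0$ for every $m$ while $\|f^{(m)}\| \to 0$ as $m \to \infty$. Let $V$ be any convex, ortho-symmetric set containing $f$. Flipping the signs of $\theta_j$ for $j<m$ produces $f' \in V$ by ortho-symmetry, and then the midpoint $\tfrac{1}{2}(f+f') = \sum_{j \ge m}\theta_j\phi_j = f^{(m)}$ lies in $V$ by convexity. Choose an increasing sequence $m_j \to \infty$ and set $n_j = \lceil \|f^{(m_j)}\|^{-2}\rceil$; then $\|f^{(m_j)}\| \asymp n_j^{-1/2}$ and $n_j \to \infty$. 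By monotonicity of $\kappa_k^2$,
\[n_j\,T(f^{(m_j)}) = n_j\sum_{k \ge m_j}\kappa_k^2\theta_k^2 \le n_j\,\kappa_{m_j}^2 \|f^{(m_j)}\|^2 \le 2\kappa_{m_j}^2 \to 0,\]
which by the first paragraph makes $\{f^{(m_j)}\}\subset V$ an inconsistent subsequence at the required rate $n_j^{-1/2}$, proving \textit{ii.}

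The main obstacle is justifying the asymptotic normality display under the minimal conditions D1--D2, rather than under the structural assumptions A1--A5 used in Sections \ref{sec4}--\ref{sec7}. I would handle this by a direct Lindeberg argument: each summand $\kappa_j^2(y_j^2 - n^{-1}\sigma_j^2)$ has variance of order $n^{-2}\kappa_j^4$, summability of $\kappa_j^2$ with $\sigma_j$ bounded yields $\sum_j\kappa_j^4\sigma_j^4 < \infty$, and the signal contribution $\sum_j\kappa_j^2\theta_{nj}^2$ is treated exactly as in Theorem \ref{tq2}. Closedness of each $U(l,P_0)$, required by the definition in Subsection \ref{ss2.3}, is automatic since it is a norm ball in the finite-dimensional subspace spanned by $\phi_1,\dots,\phi_l$.
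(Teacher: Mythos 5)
Your overall structure (reduce everything to the size of $n\,T(\thetab_n)$; for \textit{i.} use that $U(l,P_0)$ is supported on the first $l$ coordinates so $n\,T(\thetab_n)\ge\kappa_l^2c^2$; for \textit{ii.} pass to the tails $f^{(m)}=\sum_{j\ge m}\theta_j\phi_j\in V$ via ortho-symmetry and convexity and calibrate $n_j$ so that $\|f^{(m_j)}\|\asymp n_j^{-1/2}$) is exactly the paper's route: the paper verifies \textit{ii.} by taking $\bar f_l=\sum_{j=l}^\infty\theta_j\phi_j$ and invoking its $r=1/2$ inconsistency criterion, and the tail bound $n_jT(f^{(m_j)})\le 2\kappa_{m_j}^2\to 0$ is the right computation. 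The inconsistency half of your criterion ($n\,T(\thetab_n)\to 0$ implies inconsistency) is also fine and is what the paper uses (the ``evident modification of \textit{iii.} in Theorem \ref{tcm}'').

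The genuine gap is the tool you use for the consistency half. At $r=1/2$ the display
\begin{equation*}
\beta(K_n,\thetab_n)=\Phi\Bigl(x_\alpha-\frac{n\,T(\thetab_n)}{\sqrt{2\sum_j\kappa_j^4\sigma_j^4}}\Bigr)(1+o(1))
\end{equation*}
is false, and no Lindeberg argument can rescue it. After the rescaling $z_j=n^{1/2}y_j$ the statistic $n\,T(\yb)=\sum_j\kappa_j^2z_j^2$ has, under the null, the \emph{same} distribution for every $n$: a weighted sum $\sum_j\kappa_j^2\sigma_j^2\xi_j^2$ with fixed, decreasing, summable weights. The first summand carries a fixed positive fraction of the total variance, so the triangular array is not infinitesimal, the Lindeberg condition fails, and the limit law is a (non-Gaussian) weighted chi-square — this is precisely the Cram\'er--von Mises situation. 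The Gaussian approximation of Theorem \ref{tq2} is available in Sections \ref{sec4}--\ref{sec6} only because there $k_n\to\infty$ (assumptions A2, A4--A5 make the weights $\kappa_{nj}^2$ flatten out with $n$); with $k_n=1$ that mechanism is absent. Consequently your equivalence ``consistent iff $\liminf_n n\,T(\thetab_n)>0$'' is not established by your argument. The implication you actually need for \textit{i.} — that $n\,T(\thetab_n)\ge a>0$ forces $\limsup_n\beta(K_n,\thetab_n)<1-\alpha$ uniformly — is exactly the unbiasedness statement of Theorem \ref{th82}, which the paper proves by a subsequence-compactness argument combined with Anderson's inequality (and Proposition 7.1 in \cite{lif}), not by a CLT; since after rescaling neither the test nor the set $\Upsilon(a)$ depends on $n$, that fixed-sample inequality $\alpha(K)+\beta(K,\Upsilon(a))<1$ yields the uniform bound. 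So your proof becomes correct once the false normality step is replaced by an appeal to (or a proof of) Theorem \ref{th82}; as written, the central reduction rests on a limit theorem that does not hold in this regime.
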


\begin{thm}\label{th81} Assume D1 and D2. Then Theorems \ref{tq3}, \ref{tq4} and \ref{tq1} - \ref{tq8} are valid with $k_n =1$ and sets $\mathbb{\bar B}_{2\infty}^s(P_0)$  replaced with sets $U(l,P_0)$, where $l =1,2,\ldots$ and $P_0 >0$.
\end{thm}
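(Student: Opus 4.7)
The plan is to carry the strategy of Section \ref{sec4} over to the degenerate regime $r=1/2$, $k_n=1$, taking advantage of the summability $\sum_j \kappa_j^2 < \infty$ in D1 to reduce everything to a finite set of ``active'' coordinates. First I would establish the analogue of Theorem \ref{tq2} in this setup. Expanding
\begin{equation*}
nT_n(\yb) \;=\; \sum_{j=1}^\infty \kappa_j^2 \sigma_j^2 (\xi_j^2 - 1) \;+\; n\,T(\thetab_n) \;+\; 2 n^{1/2} \sum_{j=1}^\infty \kappa_j^2 \sigma_j \,\theta_{nj}\,\xi_j,
\end{equation*}
under $\mathbb{H}_0$ this statistic converges in distribution to $Z := \sum_j \kappa_j^2 \sigma_j^2 (\xi_j^2 - 1)$, which is well-defined by D1--D2 with finite variance $2A = 2\sum_j \kappa_j^4 \sigma_j^4$. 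Choosing $x_\alpha$ as the $(1-\alpha)$-quantile of $Z$ gives a test $K_n = \mathbf{1}_{\{nT_n(\yb) > x_\alpha\}}$ with $\alpha(K_n) = \alpha + o(1)$. Under $\thetab_n$ with $T(\thetab_n) \asymp n^{-1}$, passing to subsequences along which $\sqrt n\,\theta_{nj} \to \mu_j$ coordinatewise (which is possible in $\ell^2$ since $\sum \kappa_j^2 (\sqrt n \theta_{nj})^2$ stays bounded), the limiting law becomes $Z_\mu = \sum_j \kappa_j^2 \sigma_j^2\bigl((\xi_j + \mu_j/\sigma_j)^2 - 1\bigr)$, and a standard Neyman--Pearson argument shows that $K_n$ is asymptotically minimax for $\{\thetab : T(\thetab) \ge c n^{-1}\}$.

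Next, for the analogues of Theorems \ref{tq3} and \ref{tq4}, the limiting type II error depends only on $\mu = (\mu_j)$, and approaches $1-\alpha$ iff $\mu = 0$, i.e.\ $\sqrt n \,\theta_{nj} \to 0$ for every fixed $j$. Using that the tail $\sum_{j>L} \kappa_j^2 \to 0$ as $L \to \infty$ (D1), this reduces to the dichotomy: $\thetab_n$ is consistent iff there exist fixed integers $c_1,c_2,n_0$ with $\sum_{|j|<c_2} |\theta_{nj}|^2 > c_1 n^{-1}$ for all $n \ge n_0$, and inconsistent iff this sum is $o(n^{-1})$ for every $c_2$. This is precisely the $k_n = 1$ version of Theorems \ref{tq3}--\ref{tq4}.

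For the maxiset statement, observe that $U(l,P_0)$ consists of vectors supported on $\{1,\ldots,l\}$, so any $\thetab_n \in \gamma U(l,P_0)$ with $\|\thetab_n\| \asymp n^{-1/2}$ automatically satisfies $\sum_{j \le l} \theta_{nj}^2 = \|\thetab_n\|^2 \asymp n^{-1}$, giving clause \emph{i.} immediately. For clause \emph{ii.}, if $f \notin \Xi$ then $f$ has nonzero Fourier coefficients with arbitrarily large indices, and in any convex ortho-symmetric set $V \ni f$ the rescaled tails $\gamma_n(0,\ldots,0,\theta_{L_n+1},\theta_{L_n+2},\ldots)$ with $L_n \to \infty$ slowly and $\gamma_n$ chosen so that the $\mathbb{L}_2$-norm is $\asymp n^{-1/2}$ lie in $V$ (by ortho-symmetry and convexity applied to $f$ and $-f$) and are inconsistent by the characterization just established. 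The remaining Theorems \ref{tq7}, \ref{tq11}, \ref{tq12}, \ref{tq5}, \ref{tq6} and \ref{tq8} follow by the same decomposition arguments as in Section \ref{sec4}: writing $f_n = f_{1n} + (f_n - f_{1n})$ with $f_{1n}$ the projection of $f_n$ on $\{1,\ldots,l\}$ for appropriately chosen fixed $l$, so that orthogonality splits the norm and the contribution of $f_n - f_{1n}$ to $T(\cdot)$ can be made arbitrarily small via the tail bound $\sum_{j>l}\kappa_j^2 < \varepsilon$.

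The main obstacle is the non-Gaussian limit law $Z$: in Sections \ref{sec4}--\ref{sec7} the effective dimension $k_n \to \infty$ produced an asymptotically Gaussian test statistic and the clean formula $\beta(K_n,f_n) \approx \Phi(x_\alpha - \cdot)$, whereas here only finitely many terms contribute appreciably, so the limit is genuinely a weighted $\chi^2$-type law. This is handled by taking $x_\alpha$ directly as the quantile of $Z$ rather than the normal quantile; the consistency/inconsistency dichotomy and maxiset structure then survive unchanged, with the sole change that the ``critical band'' $|j| < c_2 k_n$ collapses to a fixed finite index set.
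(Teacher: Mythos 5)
Your outline follows the same route the paper takes (and largely leaves implicit): reduce to a fixed finite block of coordinates using $\sum_j\kappa_j^2<\infty$, pass to subsequences with coordinatewise limits $\sqrt{n}\,\theta_{nj}\to\mu_j$, note that mass escaping to high frequencies contributes nothing to the statistic because $\kappa_j^2\to 0$, and then get the dichotomy of the $k_n=1$ versions of Theorems \ref{tq3}--\ref{tq4}, clause \emph{i.} of the maxiset property from the finite support of $U(l,P_0)$, clause \emph{ii.} from the tails $\bar f_l=\sum_{j\ge l}\theta_j\phi_j$, and the remaining statements by the Section \ref{sec4} decomposition arguments. However, there is a genuine gap at the decisive step. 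You assert that the limiting type II error ``approaches $1-\alpha$ iff $\mu=0$.'' The ``if'' direction is immediate, but the ``only if'' direction --- that a nonzero limit $\mu$ forces
\begin{equation*}
\mathbf{P}\Bigl(\sum_{j=1}^\infty \kappa_j^2\sigma_j^2\bigl(\xi_j+\mu_j/\sigma_j\bigr)^2\le x\Bigr)\;<\;\mathbf{P}\Bigl(\sum_{j=1}^\infty \kappa_j^2\sigma_j^2\xi_j^2\le x\Bigr)
\end{equation*}
strictly --- is exactly the nontrivial content in the $r=1/2$ regime, and you give no argument for it. The limit law is a shifted weighted sum of squared Gaussians, not a Gaussian, so there is no formula of the type $\Phi(x_\alpha-\cdot)$ to read the power off from; Anderson's theorem only yields the non-strict inequality, and strictness must be proved. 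This is precisely what the paper isolates as Theorem \ref{th82} (unbiasedness of tests generated by $T(\zb)$), the only part of the proof of Theorem \ref{th81} it writes out in detail (subsection \ref{subsec9.7}): Anderson's theorem applied after conditioning on a single coordinate (Lemma \ref{lum2}), a strict-monotonicity/positive-density argument for the Gaussian quadratic form (Proposition 7.1 in Lifshits), and a two-case analysis according to whether the alternative's mass stays in a fixed finite block or escapes to high indices. Your appeal to ``a standard Neyman--Pearson argument'' for asymptotic minimaxity is likewise unsubstantiated, but minimaxity is not what is needed here; strict unbiasedness is, and without it neither the consistency half of the $k_n=1$ version of Theorem \ref{tq3} nor clause \emph{i.} of the maxiset property is established.

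Two smaller points. In clause \emph{ii.} you rescale the tails by $\gamma_n$ to reach norm $\asymp n^{-1/2}$; convexity plus ortho-symmetry only lets you shrink coefficients, so $\gamma_n\le 1$ is required --- the correct move (as in the proof of Theorem \ref{tq1}) is to keep the tail $\bar f_l$ fixed and choose the sample size $n_l$ so that $\|\bar f_l\|\asymp n_l^{-1/2}$, then invoke the $k_n=1$ version of Theorem \ref{tq4}. Also, to convert the pointwise statement at a fixed limit $\mu$ into $\limsup_n\beta(K_n,f_n)<1-\alpha$ you should say explicitly that you argue by contradiction along a subsequence realizing the $\limsup$; with that and the strict inequality above, the rest of your plan goes through.
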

Proof of Theorem \ref{th81} is based on Theorem \ref{th82} given below and evident modification of {\sl iii.} in Theorem \ref{tcm} on this setup. Reasoning are akin to proof of Theorems in section \ref{sec4} and is omitted. Note only that for verifying {\sl ii} in definition of maxisets we put $\bar f_l = \sum_{j=l}^\infty \theta_j \phi_j$ (see proof of Theorem \ref{tq1}). After that we implement version of Theorem \ref{tq4} for this setup.

Denote $z_j = n^{1/2} y_j$ and $\eta_j = n^{1/2}\theta_j$. Then problem of hypothesis testing (\ref{hy81}) and (\ref{hy82}) is replaced with the following.

We observe independent random variables $z_j = \eta_j + \sigma_j \xi_j$. Problem is to test hypothesis
\begin{equation}\label{hy83}
\mathbb{H}_0\,:\, \eta_j =0, \quad 1 \le j < \infty
\end{equation}
versus alternatives
\begin{equation}\label{hy84}
\mathbb{H}_n \,:\, \eta_j = \tau_{j}, \quad 1 \le j < \infty,
\end{equation}
where $0<T(\taub) < \infty$ with $\taub = \{\tau_{j}\}_1^\infty$.

For $a > 0$, define sets of alternatives
\begin{equation}\
\Upsilon(a) = \{ \,\etab\,:\, T(\etab) > a, \etab = \{\eta_j\}_1^\infty, \, \eta_j \in \mathbb{R}^1\}
\end{equation}
We say that test $K$ is unbiased \cite{le}, if
\begin{equation}\label{hy85}
\alpha(K) + \beta(K,\Upsilon(a))  < 1.
\end{equation}
Denote $\zb = \{z_j\}_1^\infty$.
\begin{thm}\label{th82} Assume D1 and D2. Then tests $K$, $\alpha(K) = \alpha$, $0 < \alpha <1$, generated test statistics $T(\zb)$ are unbiased.
\end{thm}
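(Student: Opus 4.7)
The plan is to combine Anderson's inequality (yielding pointwise strictness) with a limiting argument (yielding uniform strictness over $\Upsilon(a)$, which is non-compact in infinite dimensions). I would first set up the test explicitly: $K$ rejects iff $T(\zb) > c_\alpha$, where $c_\alpha$ is the $(1-\alpha)$-quantile of $T(\zb)$ under $\mathbb{H}_0$; since $\zb$ is Gaussian with mean $\etab$ and diagonal covariance $\Sigma = \operatorname{diag}(\sigma_j^2)$, the acceptance region $A = \{\zb : \sum_j \kappa_j^2 z_j^2 \le c_\alpha\}$ is a centrally symmetric, convex set in $\mathbb{H}$ (an ellipsoid in the weighted norm $|\zb|_\kappa^2 = \sum \kappa_j^2 z_j^2$).

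Next I would apply Anderson's inequality: for the centrally symmetric convex set $A$ and the centrally symmetric log-concave Gaussian law $N(\0b, \Sigma)$,
\[
P_\taub(\zb \in A) = P_0(\zb + \taub \in A) \le P_0(\zb \in A) = 1 - \alpha,
\]
with strict inequality whenever $\taub \ne \0b$ (since $A$ has nonempty interior and $\Sigma$ is non-degenerate). This delivers the pointwise bound $\alpha(K) + \beta(K,\taub) < 1$ at every $\taub \in \Upsilon(a)$.

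To upgrade to the uniform bound $\alpha(K) + \sup_\taub \beta(K,\taub) < 1$, I would argue by contradiction, using the decomposition
\[
T(\zb) = T(\taub) + T_0(\zb) + 2W(\taub),\qquad T_0(\zb) = \sum_j \kappa_j^2 \sigma_j^2 \xi_j^2,\qquad W(\taub) = \sum_j \kappa_j^2 \sigma_j \tau_j \xi_j,
\]
where $\mathrm{Var}(W(\taub)) = \sum_j \kappa_j^4 \sigma_j^2 \tau_j^2 \le C^2 \kappa_1^2\, T(\taub)$ by D1--D2. Suppose $\taub_n \in \Upsilon(a)$ satisfies $\beta(K,\taub_n) \to 1 - \alpha$. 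Chebyshev's inequality applied to $T(\zb)$ rules out $T(\taub_n)\to\infty$; hence $T(\taub_n)$ is bounded, giving $|\tau_{n,j}| \le \sqrt{B}/\kappa_j$, and by diagonal extraction $\tau_{n,j} \to \tau_j^*$ for each $j$. Set $\delta = \lim_n T(\taub_n) - T(\taub^*) \ge 0$ (Fatou). Either $\delta = 0$, in which case $\taub_n \to \taub^*$ in $|\cdot|_\kappa$ and $T(\taub^*) \ge a$ forces $\taub^* \ne \0b$, so the strict Anderson bound at $\taub^*$ passes to the limit and contradicts $\beta(K,\taub_n) \to 1-\alpha$; or $\delta > 0$, in which case the escaped mass sits on coordinates $j \to \infty$ where, by D1 and D2, the noise variance $\sum_{j > N} \kappa_j^4 \sigma_j^4 \to 0$, so the high-coordinate piece of $T(\zb)$ collapses to the deterministic value of the escaped signal, and $\lim_n P_{\taub_n}(\zb \in A) = P_{\taub^*}(T(\zb) \le c_\alpha - \delta) < 1 - \alpha$, again a contradiction.

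The main obstacle is this last step: since $\Upsilon(a)$ is not compact in the infinite-dimensional Hilbert space, pointwise strict Anderson does not immediately yield a strict uniform bound. The resolution hinges on D1: any escape of mass to large coordinates produces an almost deterministic rightward shift in $T(\zb)$ (because the tail noise $\sum_{j>N}\kappa_j^2\sigma_j^2\xi_j^2$ has variance $2\sum_{j>N}\kappa_j^4\sigma_j^4 \to 0$), so the distribution of $T(\zb)$ under $\taub_n$ cannot approach the null distribution, and $P_\taub(\zb \in A)$ stays uniformly strictly below $1-\alpha$.
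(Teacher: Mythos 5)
Your proposal is correct and follows essentially the same route as the paper's proof: argue by contradiction, extract coordinatewise limits of the alternatives, split into the two cases of vanishing versus non-vanishing escaped signal mass, and in each case combine Anderson's inequality with the vanishing tail noise variance (from D1--D2) and a strict comparison with the null distribution of $T$. The only difference is that you assert the two strictness claims (strict Anderson at a nonzero limit point, and $P_0(T\le c_\alpha-\delta)<1-\alpha$), which the paper actually proves via a conditioning argument (Lemma \ref{lum2}) and a strict-monotonicity result for Gaussian quadratic forms (Proposition 7.1 in \cite{lif}).
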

Proof of Theorem \ref{th82} is provided in \ref{subsec9.7}.

For chi-squared tests with  number of cells $m_n = m=$const similar Theorem holds for $r =1/2$ with the same definition of consistency as in section \ref{sec6}.
\begin{thm}\label{th83} For $r = 1/2$, for chi-squared tests Theorem \ref{th80} holds as well. Statement of Theorems \ref{tq3}, \ref{tq4} and \ref{tq7}, \ref{tq11}, \ref{tq6}-\ref{tq8} hold with the same changes as in Theorem \ref{tchi3} and with $k_n=1$.

  Emphasize that Besov bodies $\mathbb{\tilde B}^s_{2\infty}(P_0)$ in versions of Theorems  \ref{tq7}, \ref{tq11} and \ref{tq12}     are replaced with sets $U(l,P_0)$, $l =1,2,\ldots$ and $P_0 >0$.
\end{thm}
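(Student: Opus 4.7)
The plan is to adapt the arguments used for Theorems \ref{th80} and \ref{th81} to the chi-squared setting, taking advantage of the reduction of chi-squared statistics to quadratic forms in Fourier coefficients established for Theorem \ref{tchi3}, but now specialized to a constant number of cells $m_n = m$. Under this specialization, Theorem \ref{chi2} applies with $\rho_n \asymp 1$ (so that $m^{-1/2}\rho_n$ lies between positive constants), although its Gaussian-quantile formula for $\beta(K_n,F_n)$ must be replaced by the noncentral $\chi^2_{m-1}$ formula that governs the limiting distribution when $m$ is fixed. The analog of Theorem \ref{th82}, unbiasedness of the limiting $\chi^2$ test, then supplies the consistency/inconsistency dichotomy that drives the rest.

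First I would translate the chi-squared statistic into Fourier coordinates via the identity $p_{nj} - 1/m = \int_{(j-1)/m}^{j/m} f_n(x)\,dx = \sum_{k} a^{(m)}_{jk}\,\theta_{nk}$ with $a^{(m)}_{jk} = \int_{(j-1)/m}^{j/m} \phi_k(x)\,dx$, so that the noncentrality parameter of the limiting $\chi^2_{m-1}$ becomes a positive semidefinite quadratic form $n \langle Q^{(m)} \thetab_n, \thetab_n \rangle$ in the Fourier coefficients $\thetab_n = \{\theta_{nk}\}$. The key fact is that for $m \ge 2l+1$ the restriction of $Q^{(m)}$ to the span of $\phi_1,\ldots,\phi_l$ is positive definite, making the first $l$ Fourier coefficients detectable by a chi-squared test with $m$ cells. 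To verify property \emph{i.} of the maxiset for $U(l,P_0)$, I would fix $m \ge 2l+1$: any sequence $f_n \in U(l,P_0)$ with $cn^{-1/2} \le \|f_n\| \le Cn^{-1/2}$ then has noncentrality at least $c_1 n\|f_n\|^2 \ge c_1 c^2 > 0$, yielding consistency. Property \emph{ii.} follows the template of Theorem \ref{tq1}: for $f \notin \Xi$ each tail $\tilde f_i$ carries nonzero high-frequency mass, and by selecting $i_n, j_{i_n}$ with $\|\tilde f_{i_n}\|\asymp j_{i_n}^{-1/2}$, the chi-squared analog of Theorem \ref{tq4} produces inconsistency under any chi-squared test with $m_n \to \infty$ whose cells cannot resolve the tail frequencies. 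The remaining Theorems \ref{tq3}, \ref{tq4}, \ref{tq7}, \ref{tq11}, \ref{tq6}--\ref{tq8} transfer once $k_n = 1$, $U(l,P_0)$ replaces the Besov body, and condition B is enforced along truncations as in Theorem \ref{tchi3}.

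The main obstacle is the loss of Gaussian asymptotics for the null distribution: for $m$ constant the statistic has a $\chi^2_{m-1}$ limit rather than a normal one, so the explicit $\Phi$-formula of Theorem \ref{chi2} is not directly usable, and one must instead rely on properties of the noncentral $\chi^2_{m-1}$ law (notably unbiasedness, monotonicity of type II error in the noncentrality parameter, and the fact that the type II error stays bounded away from $1-\alpha$ precisely when the noncentrality stays bounded away from zero). A secondary difficulty is the interplay between the indicator-function basis natural to the chi-squared statistic and the trigonometric basis used to define $U(l,P_0)$: controlling the smallest singular value of the rectangular matrix $A^{(m)}=(a^{(m)}_{jk})$ uniformly in the relevant range of $l$, and simultaneously verifying condition B along the truncations that replace $f$ by its low-frequency part, will require the same density adjustments already made in the proof of Theorem \ref{tchi3}.
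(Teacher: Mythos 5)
Your proposal is correct and follows essentially the same route as the paper: the paper's entire argument is to invoke the classical fact that for chi-squared tests with a fixed number of cells, consistency of $F_n$ holds if and only if the noncentrality $nT_n(F_n)$ stays bounded away from zero (exactly the noncentral $\chi^2_{m-1}$ criterion you formulate, replacing the Gaussian regime of Theorem \ref{chi2}), after which the Fourier-coefficient reduction and the transfers with $k_n=1$ and sets $U(l,P_0)$ proceed as in Theorems \ref{tchi3} and \ref{th81}. Your write-up simply supplies the details (positive definiteness on low-frequency spans for $m\ge 2l+1$, tail bounds via Lemma \ref{lch2}) that the paper leaves implicit.
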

For proof of Theorem \ref{th83} we implement wellknown  fact that $n T_n(F_n) > c$ is necessary and sufficient condition for consistency of sequence of alternatives $F_n \in \Im$ for chi-squared tests with fixed number of cells.

Theorem \ref{th82} allows to obtain versions of Theorems \ref{tom1}--\ref{tom6} for problem of hypothesis testing \ref{hy83} and \ref{hy84} with test statistics $T$ having $\kappa_j^2 \asymp j^{-2\lambda}$, $2\lambda > 1$.

Such a setup arises in particular for test statistics $T$ constructed on the base of technique of reproducing kernel Hilbert spaces \cite{gr}.
\begin{thm} \label{th84} Let $\kappa_j^2 \asymp j^{-2\lambda}$, $2\lambda >1$. Then statements of Theorems \ref{tom1} -- \ref{tom6}  holds with $s = \frac{2\lambda r}{1 -2 \lambda r}$ and $ k_n \asymp n^{\frac{1-2r}{2\lambda}}$ as $n \to  \infty$. All assumptions caused the requirement of density non-negativity are omitted.
\end{thm}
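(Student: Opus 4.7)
The plan is to rerun the arguments of section \ref{sec7} with the Cramer--von Mises weights $\pi^{-2}j^{-2}$ replaced by the generic weights $\kappa_j^2 \asymp j^{-2\lambda}$, and with every appeal to the Cramer--von Mises-specific unbiasedness (Theorem \ref{tcm}(ii)) replaced by Theorem \ref{th82}, which is the only structural ingredient that changes. Work is carried out in the Gaussian sequence model $y_j=\theta_j+n^{-1/2}\sigma_j\xi_j$ of Theorem \ref{th82}. The assumption $2\lambda>1$ enters exactly through $\sum_j\kappa_j^4 \asymp \sum_j j^{-4\lambda}<\infty$, which delivers three facts: the centered statistic $T(\yb)-n^{-1}\sum_j\kappa_j^2\sigma_j^2$ has variance of order $n^{-2}$; the rescaled statistic $n(T(\yb)-n^{-1}\sum_j\kappa_j^2\sigma_j^2)$ has a non-degenerate weighted-$\chi^2$ limit law; and the detection threshold on the alternatives takes the clean form $T(\thetab_n)\gtrsim n^{-1}$.

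I would then identify the critical scale $k_n$ by a balance computation. For $\|f_n\|^2\asymp n^{-2r}$ with Fourier mass concentrated around frequencies $|j|\sim N$, one has $T(f_n)\asymp N^{-2\lambda}n^{-2r}$, so $T(f_n)\gtrsim n^{-1}$ is equivalent to $N$ being at most of order $n^{(1-2r)/(2\lambda)}$, which matches the announced $k_n$. The Besov index $s$ is then pinned down by requiring that the Besov tail bound $\sum_{j>k_n}\theta_{nj}^2\le P_0 k_n^{-2s}$ be of smaller order than $\|f_n\|^2\asymp n^{-2r}$. This immediately supplies the Fourier-concentration characterisations analogous to Theorems \ref{tq3} and \ref{tq4}: consistency is equivalent to $\liminf_n n^{2r}\sum_{|j|\le ck_n}|\theta_{nj}|^2>0$ for some fixed $c$, and inconsistency is equivalent to the same quantity going to zero for every $c$.

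With these ingredients each of Theorems \ref{tom1}--\ref{tom6} is translated term by term. For the maxiset property of $\mathbb{\bar B}^s_{2\infty}(P_0)$, item (i) uses the Besov tail bound to force $\sum_{|j|\le k_n}\theta_{nj}^2\asymp n^{-2r}$, hence $T(f_n)\ge c\kappa_{k_n}^2 n^{-2r}\asymp n^{-1}$, whence consistency by Theorem \ref{th82}; item (ii) is the symmetric tail construction, producing for $f\notin\Xi$ subsequences of tails $\tilde f_{n_i}$ of the right $\mathbb{L}_2$-size with $T(\tilde f_{n_i})=o(n_i^{-1})$ and hence inconsistent once again by Theorem \ref{th82}. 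The interaction and purely-consistent-sequence results (analogues of Theorems \ref{tq7}, \ref{tq11}, \ref{tq12}, \ref{tq6}, \ref{tq8}, \ref{tom6}) follow by the same bookkeeping used in section \ref{sec7}, with Lemma \ref{ld3} applied at the new scale $k_n$. Removal of the density-positivity hypotheses in the analogues of Theorems \ref{tom4} and \ref{tom6} is automatic since the Gaussian observation model carries no nonnegativity constraint on $1+f_n$, which is the content of the final sentence of the theorem.

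I expect the main obstacle to lie in the uniform distributional approximation of $n(T(\yb)-n^{-1}\sum_j\kappa_j^2\sigma_j^2)$ under sequences of alternatives with $cn^{-r}\le\|f_n\|\le Cn^{-r}$. In contrast to section \ref{sec4}, where the $n$-dependent weights $\kappa_{nj}^2$ spread out with $n$ and a genuine central limit theorem yields a Gaussian limit (Theorem \ref{tq2}), here the fixed weights $\kappa_j^2\asymp j^{-2\lambda}$ place most of the mass of $\sum_j\kappa_j^4$ on the first few indices, so the limit is a weighted chi-square and the standard CLT machinery is not available. The required approximation must instead be obtained by a direct characteristic-function/moment argument, together with a separate uniform control of the deterministic shift term. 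This is precisely the type of argument that underpins Theorem \ref{tcm} in the Cramer--von Mises case, and it extends without new ideas to general $\kappa_j^2\asymp j^{-2\lambda}$ once the finiteness of $\sum_j\kappa_j^4$ is used in place of the explicit Cramer--von Mises weights.
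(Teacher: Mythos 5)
Your overall route is the paper's own (the paper omits the proof, stating only that it is ``akin to the proof of Theorems \ref{tom1}--\ref{tom6}'' with Theorem \ref{th82} supplying the unbiasedness ingredient), and your translation scheme --- balance computation for $k_n$, Besov bookkeeping at the new scale, Fourier-concentration analogues of Theorems \ref{tq3} and \ref{tq4}, tail construction for {\sl ii.}, and dropping the density constraints because the Gaussian model has none --- is exactly what the author intends. Two remarks. First, the ``main obstacle'' you anticipate in the last paragraph does not actually arise: in the sequence model $nT(\yb)$ has \emph{exactly} the law of $\sum_j\kappa_j^2(\sigma_j\xi_j+n^{1/2}\theta_{nj})^2$, so no characteristic-function or moment approximation is required; the consistency direction rests on the Anderson-theorem unbiasedness of Theorem \ref{th82} (whose proof already yields a uniform gap over $\Upsilon(a)$ via the subsequence/contradiction argument), the inconsistency direction on the elementary $o_P(1)$ perturbation of the null statistic when $nT(\thetab_n)\to 0$, and the comparisons needed for the analogues of Theorems \ref{tq11} and \ref{tom6} on a Lemma \ref{lc4}-type uniform density bound combined with the seminorm triangle inequality --- also note that Theorem \ref{tcm} itself is not proved by characteristic functions but by the Hungarian construction, Lemmas \ref{lc1}--\ref{lc4} and Theorem \ref{th82}, and the Hungarian/empirical-process layer disappears here. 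Second, be explicit about the exponent you are proving: your own balance ($k_n^{-2s}\asymp n^{-2r}$ with $k_n\asymp n^{(1-2r)/(2\lambda)}$) gives $s=\frac{2\lambda r}{1-2r}$, equivalently $r=\frac{s}{2s+2\lambda}$, which is consistent with the $k_n$ displayed in the statement but agrees with the displayed $s=\frac{2\lambda r}{1-2\lambda r}$ only when $\lambda=1$; the statement is internally inconsistent for $\lambda\neq 1$ (apparently a misprint, since the CvM case $r=\frac{s}{2+2s}$ generalizes to $r=\frac{s}{2s+2\lambda}$), so your write-up should record the relation $s=\frac{2\lambda r}{1-2r}$ that your argument actually establishes rather than silently citing the printed formula.
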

Proof of Theorem \ref{th84} is akin to proof of Theorems \ref{tom1}--\ref{tom6} and is omitted.

\appendix

\section{Proof of Theorems}\label{app}

 \subsection{Proof of Theorems of section \ref{sec3} \label{subsec9.3}}

 It suffices to prove only necessary conditions.

 We suppose set $U$ is  closed. General setup can be reduced easily to this one.

 First we prove Theorem \ref{tqq} if set $U$ is center-symmetric.

 We remind that set $U$ is center-symmetric  if $\thetab \in U$ implies $-\thetab \in U$.

 \begin{lemma} \label{lqq} Suppose that set U is bounded, convex and center-symmetric. Then the statement of Theorem \ref{tqq} holds.
 \end{lemma}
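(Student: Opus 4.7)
The plan is to prove only the necessary condition: assuming $U$ is bounded, convex and center-symmetric but \emph{not} relatively compact, I will show that for every sequence $\rho_n \to 0$ there is no uniformly consistent sequence of tests for $V_n = \{\thetab \in U : \|\thetab\| \ge \rho_n\}$. The sufficient direction is handled by taking the $\mathbb{L}_2$-norm of any $\mathbb{L}_2$-consistent estimator, whose existence on relatively compact sets follows from \cite{ih, jo}, exactly as noted after the statement of Theorem \ref{tqq}.

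The first step is to extract a sufficiently rich sub-family from $U$. Since $U$ is bounded in $\mathbb{H}$ and not relatively compact, there exist $\varepsilon_0 > 0$ and vectors $\psib_k \in U$, $k = 1, 2, \ldots$, with $\|\psib_k - \psib_l\| \ge \varepsilon_0$ for $k \neq l$. By center-symmetry and convexity, $\tfrac{1}{2}(\psib_k - \psib_l) \in U$ and has norm at least $\varepsilon_0/2$. A diagonal/Gram--Schmidt extraction (together with the fact that $\0b$ is radially absorbing, so rescalings stay in $U$) then produces a pairwise orthogonal sequence $\phib_1, \phib_2, \ldots \in U$ with $\|\phib_k\| \ge \varepsilon_1 > 0$. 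For each $n$ with $\rho_n \le \varepsilon_1$ and each $k$, set $\thetab_{n,k} = (\rho_n/\|\phib_k\|)\phib_k$; by convexity $\thetab_{n,k} \in U$, by construction $\|\thetab_{n,k}\| = \rho_n$, and by center-symmetry $-\thetab_{n,k} \in U$ as well, so $\pm\thetab_{n,k} \in V_n$.

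The second step is a standard two-hypothesis reduction via a mixture prior. Define
\[
\pi_n = \frac{1}{2N_n}\sum_{k=1}^{N_n}(\delta_{\thetab_{n,k}} + \delta_{-\thetab_{n,k}}),
\]
supported on $V_n$, with $N_n \to \infty$ to be chosen. Under the Gaussian sequence model (\ref{q2}), a direct computation using orthogonality of the $\thetab_{n,k}$ gives
\[
\int\!\!\int \exp\Bigl(\frac{n}{\sigma^2}\langle\thetab,\thetab'\rangle\Bigr)\,\pi_n(d\thetab)\,\pi_n(d\thetab') = 1 + \frac{1}{N_n}\Bigl[\cosh\!\Bigl(\frac{n\rho_n^2}{\sigma^2}\Bigr) - 1\Bigr],
\]
so the $\chi^2$-divergence between the mixture $\mathbf{P}_{\pi_n}$ and the null $\mathbf{P}_{\0b}$ equals $N_n^{-1}[\cosh(n\rho_n^2/\sigma^2) - 1]$. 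Because the orthogonal family is infinite, I am free to choose $N_n$ as large as I wish; picking $N_n \ge \exp(2n\rho_n^2/\sigma^2)$ makes $\chi^2(\mathbf{P}_{\pi_n},\mathbf{P}_{\0b}) \to 0$. By Le Cam's inequality (equivalently, the exponential-decay bound of \cite{le73, sch} cited at the end of Section \ref{sec3}, or Theorem~5.3 of \cite{er15}), this forces $\liminf_n\bigl(\alpha(K_n) + \sup_{\thetab \in V_n}\beta_{\thetab}(K_n)\bigr) \ge 1$ for every sequence of tests $K_n$, contradicting uniform consistency.

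The main obstacle is the first step: producing from a merely non-relatively-compact, convex, center-symmetric $U$ an infinite family $\{\phib_k\}\subset U$ that is simultaneously uniformly bounded away from zero in norm \emph{and} pairwise orthogonal. If exact orthogonality is hard to arrange while staying in $U$, an approximately orthogonal family with rapidly decaying off-diagonal inner products $|\langle\phib_k,\phib_l\rangle|$ is enough — the off-diagonal contributions in the computation above become $\cosh(n\rho_n^2 \varepsilon_{kl}/\sigma^2) - 1$ and can be absorbed into the error by choosing the decay of $\varepsilon_{kl}$ fast enough relative to $N_n$. The convexity and center-symmetry are used twice: to perform the symmetrization $\frac{1}{2}(\psib_k - \psib_l) \in U$, and to rescale inside $U$ down to norm $\rho_n$, both of which are essential for the construction.
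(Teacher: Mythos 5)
Your route is genuinely different from the paper's and, in outline, viable. You prove the contrapositive directly: from non-compactness you extract an (almost) orthogonal family in $U$ with norms bounded below, rescale it into $V_n$ (legitimate, since $\0b\in U$ by center-symmetry and convexity), and run a self-contained Ingster-type $\chi^2$ computation for the symmetric mixture prior, concluding $\liminf_n(\alpha(K_n)+\sup_{V_n}\beta)\ge 1$; the second-moment identity and the value $1+N_n^{-1}[\cosh(n\rho_n^2/\sigma^2)-1]$ are correct. The paper argues in the opposite direction: assuming uniformly consistent tests exist, it constructs the exactly orthogonal maximizers $\eb_i\in U$, invokes Theorem 5.3 of \cite{er15} to force $d_i=\|\eb_i\|\to 0$, and then proves total boundedness by the covering argument built on Lemma \ref{lqq1} (projection onto $\Pi_{l_\varepsilon}$, a finite net, the $9\varepsilon$ estimate). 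Your approach buys self-containedness on the testing side and avoids the covering geometry entirely; the paper's buys a proof whose only probabilistic ingredient is one citation, the rest being deterministic Hilbert-space geometry.

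The genuine gap is your first step. A ``diagonal/Gram--Schmidt extraction'' does not yield an exactly orthogonal family inside $U$: orthogonalized vectors leave $U$, and the existence of an exactly orthogonal family in $U$ with norms bounded below is precisely the nontrivial geometric content of the paper's proof (it follows from the maximizer construction of the $\eb_i$ combined with the covering argument, which shows that $d_i\to 0$ would force total boundedness); it cannot be waved through. Your approximate-orthogonality fallback is the right repair, but it should be made precise: take an $\varepsilon_0$-separated sequence in $U$, pass to a weakly convergent subsequence $\boldsymbol{\psi}_k\rightharpoonup\boldsymbol{\psi}\in U$ (weak closedness uses convexity and closedness), note that $(\boldsymbol{\psi}_k-\boldsymbol{\psi})/2\in U$ by center-symmetry and convexity, is weakly null and, after discarding at most one term, has norm at least $\varepsilon_0/4$; a gliding-hump argument then supplies, for every tolerance and every finite size, a $\delta$-almost-orthogonal subfamily. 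Crucially the tolerance must be tied to $n\rho_n^2$, not only to $N_n$: for a fixed family the off-diagonal terms $\cosh(n\rho_n^2\varepsilon_{kl}/\sigma^2)-1$ blow up when $n\rho_n^2\to\infty$, so the finite subfamily must be re-selected for each $n$ (e.g.\ arrange $|\varepsilon_{kl}|\le 2^{-\max(k,l)}$ and take indices beyond an $n$-dependent threshold). Finally, a small quantitative slip: $N_n\ge\exp\{2n\rho_n^2/\sigma^2\}$ alone does not give $\chi^2\to 0$ when $n\rho_n^2$ stays bounded; you also need $N_n\to\infty$, e.g.\ $N_n\ge n\exp\{2n\rho_n^2/\sigma^2\}$, which your infinite family permits. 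With these repairs your argument goes through.
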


 For any vectors $\thetab_1 \in \mathbb{H}$ and $\thetab_2 \in \mathbb{H}$ define segment $\frak{int}(\thetab_1,\thetab_2) = \{\thetab : \thetab = (1 - \lambda)\,\thetab_1 + \lambda \,\thetab_2, \,\, \lambda \in [0,1]\,\}$.

 Proof of Lemma \ref{lqq} is based on the following Lemma \ref{lqq1}.
 \begin{lemma}\label{lqq1} For any vectors $\thetab_1 \in U$ and $\thetab_2 \in U$ we have $\frak{int}\Bigl(\frac{\thetab_1-\thetab_2}{2},\frac{\thetab_2-\thetab_1}{2}\Bigr) \subset U$.
 There  holds $0 \in \frak{int}\Bigl(\frac{\thetab_1-\thetab_2}{2},\frac{\thetab_2-\thetab_1}{2}\Bigr)$ and segment $ \frak{int}\Bigl(\frac{\thetab_1-\thetab_2}{2},\frac{\thetab_2-\thetab_1}{2}\Bigr)$ is parallel to segment  $\frak{int}(\thetab_1,\thetab_2)$.
 \end{lemma}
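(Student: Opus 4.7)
The plan is to exploit center-symmetry of $U$ together with convexity in a direct computational fashion. Since the claim concerns only elementary affine geometry in the Hilbert space $\mathbb{H}$, no analytic machinery beyond these two hypotheses on $U$ should be required; the whole argument should reduce to forming explicit convex combinations.

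First I would observe that center-symmetry of $U$ gives $-\thetab_2 \in U$ whenever $\thetab_2 \in U$, and likewise $-\thetab_1 \in U$. Then, by convexity applied to the pair $\thetab_1,\,-\thetab_2$, the midpoint $\frac{\thetab_1-\thetab_2}{2}=\frac{1}{2}\thetab_1+\frac{1}{2}(-\thetab_2)$ belongs to $U$, and symmetrically $\frac{\thetab_2-\thetab_1}{2}\in U$. A second application of convexity to this pair of endpoints yields $\frak{int}\bigl(\frac{\thetab_1-\thetab_2}{2},\frac{\thetab_2-\thetab_1}{2}\bigr)\subset U$, which is the first assertion.

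Next, the midpoint of the segment $\frak{int}\bigl(\frac{\thetab_1-\thetab_2}{2},\frac{\thetab_2-\thetab_1}{2}\bigr)$ corresponds to $\lambda=\tfrac{1}{2}$ and equals $\tfrac{1}{2}\cdot\frac{\thetab_1-\thetab_2}{2}+\tfrac{1}{2}\cdot\frac{\thetab_2-\thetab_1}{2}=\0b$, proving the second statement. For the parallelism claim, I would compute the direction vector of the new segment as $\frac{\thetab_2-\thetab_1}{2}-\frac{\thetab_1-\thetab_2}{2}=\thetab_2-\thetab_1$, which is (a scalar multiple of) the direction vector of $\frak{int}(\thetab_1,\thetab_2)$, namely $\thetab_2-\thetab_1$; hence the two segments are parallel.

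There is no genuine obstacle here: the entire statement is an exercise in using center-symmetry to double the set of admissible convex combinations. The only point deserving slight care is the formal verification that the parametrization $\thetab=(1-\lambda)\frac{\thetab_1-\thetab_2}{2}+\lambda\frac{\thetab_2-\thetab_1}{2}$ covers the same affine line (up to translation and rescaling) as $(1-\lambda)\thetab_1+\lambda\thetab_2$, which is immediate once the direction vectors are shown to coincide. This Lemma is then the geometric workhorse for Lemma \ref{lqq}, where it will be used to show that whenever $U$ fails to be relatively compact, one can construct an infinite sequence of well-separated points of $U$ lying on translates of segments through $\0b$, forcing the existence of inseparable alternatives in $V_n$.
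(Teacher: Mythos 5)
Your argument is correct and rests on exactly the same ingredients as the paper's proof (center-symmetry to place $-\thetab_1,-\thetab_2$ in $U$, then convexity), only packaged slightly more directly: you put the two endpoints $\frac{\thetab_1-\thetab_2}{2}$ and $\frac{\thetab_2-\thetab_1}{2}$ into $U$ first and apply convexity once more, whereas the paper obtains the same segment as the set of midpoints $\thetab_\lambda$ of cross-segments joining $\frak{int}(\thetab_1,\thetab_2)$ and $\frak{int}(-\thetab_1,-\thetab_2)$. The verification that $\0b$ is the midpoint and that the direction vector $\thetab_2-\thetab_1$ gives parallelism matches the paper's claim, so nothing is missing.
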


 {\sl Remark 3.1.} Let  we have segment $\frak{int}(\thetab_1,\thetab_2) \subset U$.
 Let $\etab$ and $-\etab$ be the points of intersection of the line $L = \{\thetab : \thetab = \lambda(\thetab_1 - \thetab_2), \lambda \in \mathbb{R}^1\}$ and the boundary of set $U$. Then, by Lemma \ref{lqq1}, we get $\|\thetab_1 - \thetab_2\| \le 2\|\etab\|$.

 \begin{proof}[ Proof of Lemma \ref{lqq1}]. Segments $\frak{int}(\thetab_1,\thetab_2) \subset U$ and $\frak{int}(-\thetab_1,-\thetab_2) \subset U$ are parallel.  For each $\lambda \in [0,1]$ we have $ (1-\lambda)\thetab_1 + \lambda \thetab_2 \in \frak{int}(\thetab_1,\thetab_2)$ and $-\lambda \thetab_1 - (1 - \lambda)\thetab_2 \in \frak{int}(-\thetab_1,-\thetab_2)$.
 The middle $\thetab_\lambda= ((1-2\lambda)\,\thetab_1 - (1-2\lambda)\,\thetab_2)/2$ of segment  $\frak{int}((1-\lambda)\thetab_1 + \lambda \thetab_2, -\lambda \thetab_1 - (1 - \lambda)\thetab_2) \subset U$ belongs to segment $\frak{int}\Bigl(\frac{\thetab_1-\thetab_2}{2},\frac{\thetab_2-\thetab_1}{2}\Bigr)$ and, for each point $\thetab$ of segment  $\frak{int}\Bigl(\frac{\thetab_1-\thetab_2}{2},\frac{\thetab_2-\thetab_1}{2}\Bigr)$, there is $\lambda\in [0,1]$ such that $\thetab=\thetab_\lambda$ . Therefore $\frak{int}\Bigl(\frac{\thetab_1-\thetab_2}{2},\frac{\thetab_2-\thetab_1}{2}\Bigr) \subset U$.
 \end{proof}

 \begin{proof}[ Proof of Lemma \ref{lqq}]
   Define sequence of orthogonal vectors $\eb_i$ by induction.

    Define vector $\eb_1$, $\eb_1 \in U$,   such that $\|\eb_1\| = \sup\{ \|\thetab\|, \thetab \in U \}$. Denote $\Pi_1$ linear subspace generated $\eb_1$. Denote $\Gamma_1$ linear subspace orthogonal to $\Pi_1$.

  Define vector $\eb_i \in U \cap \Gamma_{i-1}$  such that $\| \eb_i \|  = \sup\{ \| \thetab\| : \thetab \in U \cap \Gamma_{i-1}\}$.  Denote $\Pi_i$ linear subspace generated vectors $\eb_1, \ldots, \eb_i$. Denote $\Gamma_i$ linear subspace orthogonal to $\Pi_i$.

 Denote $d_i = \| \eb_i \|$. Note that $d_i \to 0$ as $i \to \infty$. Otherwise, by Theorem 5.3 in  \cite{er15}, there does not exist uniformly consistent tests for the problem of testing hypothesis $\mathbb{H}_0 : \thetab = \0b$ versus alternative $\mathbb{H}_1 : \thetab = \eb_i$, $i=1,2, \ldots$.

 For any $\varepsilon \in (0,1)$  denote  $l_\varepsilon = \min \{j\,:\, d_j < \varepsilon, j =1,2, \ldots \}$.

Denote $B_r(\thetab)$ ball having radius $r$ and center $\thetab$.

It suffices to show that, for any $\varepsilon_1 > 0$, there is finite coverage of set $U$ by balls $B_{\varepsilon_1}(\thetab)$.

Denote $\varepsilon = \varepsilon_1/9$.

Denote $U_\varepsilon$ projection of set $U$ onto subspace $\Pi_{l_\varepsilon}$.

Denote $\tilde B_r(\thetab)$ ball in $\Pi_{l_\varepsilon}$ having radius $r$ and center $\thetab \in \Pi_{l_\varepsilon}$. There is ball $\tilde B_{\delta_1}(0)$ such that $\tilde B_{\delta_1}(0) \subset U$. Denote $\delta = \min\{\varepsilon,\delta_1\}$.

Let $\thetab_1, \ldots, \thetab_k$ be $\delta$-net in $U_\varepsilon$.

Let $\etab_1, \ldots, \etab_k$ be points of $U$ such that $\thetab_i$ is projection of $\etab_i$ onto subspace $\Pi_{l_\varepsilon}$ for $1 \le i \le k$.

Let us show that $B_{\varepsilon_1}(\etab_1), \ldots, B_{\varepsilon_1}(\etab_k)$  is coverage of set $U$.

Let $\etab \in U$  and let $\thetab$ be projection of $\etab$ onto $\Pi_{l_\varepsilon}$. There is $i$, $1 \le i \le k$, such that  $\|\thetab_i - \thetab\| \le \delta$. It suffices to show that $\etab \in B_{\varepsilon_1}(\etab_i)$.

By Lemma \ref{lqq1}, $\frak{int}\Bigl(\frac{\etab_i-\etab}{2},\frac{\etab -\etab_i}{2}\Bigr) \subset U$. Since $\thetab_i - \thetab \in \Pi_{l_\epsilon}$ and $\thetab_i - \thetab \in \tilde B_{\delta}(\0b)$, then $(\thetab_i - \thetab)/2 \in U$.  Since set $U$ is center-symmetric and convex we  have  $\frac{1}{2}((\etab_i- \etab)/2) - \frac{1}{2}((\thetab_i - \thetab)/2) \in U$. Note that vector $(\etab_i- \thetab_i) - (\etab - \thetab)$ is orthogonal to the subspace $\Pi_{l_\varepsilon}$. Therefore $\|((\etab_i- \thetab_i) - (\etab - \thetab))/4\| \le 2\varepsilon$. Therefore $\|\etab - \etab_i\| \le 8 \varepsilon + \|\thetab - \thetab_i\| < 9 \varepsilon$. This implies $\etab \in B_{\varepsilon_1}(\etab_i)$.
\end{proof}

\begin{proof}[ Proof of Theorem \ref{tqq}] We say that set $\bar  W$ is trimmed symmetrization of set $W$ if $\xb \in \bar W$ holds, if and only if, $\xb \in W$  and $-\xb \in W$. If $W$ is convex, then $\bar W$ is convex as well.

Since $\bar U \subset U$, then there is consistent tests for problem of testing hypothesis $\thetab = \0b$ versus alternatives $\mathbb{\bar H}_n\,:\, \thetab \in \bar V_n = \{ \thetab\,:\, \|\thetab\|  \ge \rho_n, \thetab \in \bar U\}$ if there is consistent test for sets of alternatives $V_n$.

Therefore set $\bar U$ is compact. We show that this implies that set $U$ is compact as well.

Suppose otherwise. Then there are points $\xb_i \in U$, $1 \le i < \infty$, and positive constant $b$ such that $\rho(\xb_i,\Pi_{i-1}) > b$ for $2 \le i <  \infty$. Here $\Pi_{i-1}$ is hyperplane generated points $\xb_1, \ldots, \xb_{i-1}$.  Then convex hull $L \subset U$ of points $\xb_1, \xb_2, \ldots$ is not compact as well.

Denote $M$  hyperplane generated by points $\xb_i$, $1 \le i < \infty$.
Without loss of generality we can suppose $\0b \notin M$.
There is  $\lambda\, < \, 0$ such that $\xb_0 = \lambda \wb \in U$ where $\wb = \frac{\xb_1 + \xb_2}{2}$.

Denote $K$ convex hull of points $\xb_0, \xb_1, \xb_2, \ldots$. Let $\bar K$ be trimmed symmetrization of $K$. Then $\bar K \subset \bar U$ and therefore $\bar K$ is compact.
Let us show that there is $\delta > 0$ such that $\xb_0 + \delta(\xb_i - \xb_0)  \in \bar K$ for all $i = 1,2,\ldots$. Therefore set of points $\xb_0 + \delta(\xb_i - \xb_0)$,  $i = 1,2,\ldots$, is compact. We will come to contradiction.

Denote $d = \sup\{\, \|\xb - \yb\|\,:\, \xb,\yb \in U \,\}$.

Denote $\alpha_k$ angle between vectors $\xb_k - \xb_0$ and  $\wb - \xb_0$.

Denote $\beta_k$ angle between vectors $\wb - \xb_k$ and $\wb - \xb_0$.

Then angle $\gamma_k$ between vectors $\xb_0 - \xb_0$ and $\wb -\xb_0$ equals $\beta_k - \alpha_k$.

If we show $\gamma_k > c >0$ for all $k$, we prove the existence $\delta > 0$.

Denote $\wb_k$ projection of $\xb_k$ on a line passing through points $\xb_0$ and $\wb$.

Then $\|\xb_k - \wb\| \ge b$ and $\|\xb_0 - \wb_k \| \le \| \xb_0 - \xb_k\| \le d$.

Hence we have
\begin{equation*}
\begin{split} &
\gamma_k  = \arctan \frac{\|\xb_k - \wb_k\|}{\|\wb - \wb_k\|}  - \arctan \frac{\|\xb_k - \wb_k\|}{\|\xb_0 - \wb_k\|}\\&
\\ & \ge 
\arctan \frac{b}{d -\|\xb_0 - \wb\|}  - \arctan \frac{b}{d} > c > 0.
\end{split}
\end{equation*}
\end{proof}

\begin{proof}[ Proof of Theorem \ref{tqq1}]. Proof of Theorem \ref{tqq} is based on Theorem 5.3 in \cite{er15}.  For linear inverse ill-posed problems (\ref{il1}), Theorem 5.5 in \cite{er15} is akin to Theorem 5.3 in \cite{er15}. Thus it suffices to implement Theorem 5.5 in \cite{er15} instead of Theorem 5.3 in \cite{er15} in  proof of Theorem \ref{tqq}.
\end{proof}
 \subsection{Proof of Theorems of section \ref{sec4} \label{subsec9.4}}
\begin{proof}[ Proof of Theorem \ref{tq2}] Theorem \ref{tq2} and its version for Remark \ref{rem1} setup can be deduced straightforwardly from Theorem 1 in  \cite{er90}.

Lower bound follows from  reasoning of Theorem 1 in \cite{er90} straightforwardly.

Upper bound follows from the following reasoning.  We have
\begin{equation}\label{aq2}
\begin{split}&
 \sum_{j=1}^\infty \kappa_{nj}^2 y_j^2  = \sum_{j=1}^\infty \kappa_{nj}^2 \theta_{nj}^2 + 2\frac{\sigma}{\sqrt{n}}\sum_{j=1}^\infty \kappa_{nj}^2 \theta_{nj} \xi_j +
\frac{\sigma^2}{n} \sum_{j=1}^\infty \kappa_{nj}^2 \xi_j^2\\&
= n^{-2} A_n(\thetab_n) + 2\,J_{1n} + J_{2n}
\end{split}
\end{equation}
with
\begin{equation}\label{aq3}
\mathbf{E} [J_{2n}] = \frac{\sigma^2}{n} \rho_n,
\quad
\mathbf{Var} [J_{2n}] = 2 \frac{\sigma^4}{n^4} A_n
\end{equation}
and
\begin{equation}\label{aq5}
\mathbf{Var} [J_{1n}] = \frac{\sigma^2}{n}  \sum_{j=1}^\infty \kappa_{nj}^4 \theta_{nj}^2 \le \frac{\sigma^2\kappa^2_n}{n}  \sum_{j=1}^\infty \kappa_{nj}^2 \theta_{nj}^2= o(n^{-4}A_n(\thetab_n)).
\end{equation}
By Chebyshov inequality, it follows from (\ref{aq2}) - (\ref{aq5}), that, if $A_n  = o(A_n(\thetab_n))$ as $ n \to \infty$, then $\beta(L_n,f_n) \to 0$ as $n \to \infty$. Thus it suffices to explore the case
\begin{equation}\label{aq6}
A_n\asymp A_n(\thetab_n) = n^2\sum_{j=1}^\infty \kappa_{nj}^2 \theta_{nj}^2.
\end{equation}
If (\ref{aq6}) holds, then, implementing the reasoning of proof of Lemma 1 in \cite{er90}, we get that (\ref{aq1}) holds. \end{proof}
\begin{proof}[ Proof of Theorem  \ref{tq3}]  Let (\ref{con2}) hold. Then, by A5 and (\ref{u1}), we have
\begin{equation*}
A_n(\thetab_n) = n^2 \sum_{j=1}^\infty \kappa_{nj}^2 \theta_{nj}^2 \ge C n^2\kappa^2_n \sum_{j=1}^{c_2k_n}  \theta_{nj}^2 \asymp n^2 \kappa_n^2 n^{-2r}  \asymp 1.
\end{equation*}
By Theorem \ref{tq2}, this implies sufficiency.

Necessary conditions follows from sufficiency conditions in Theorem \ref{tq4}. \end{proof}
\begin{proof}[ Proof of Theorem  \ref{tq4}] Let (\ref{con3}) hold. Then, by (\ref{u1}) and A2, we have
\begin{equation}\label{eq2}
A_n(\thetab_n) \le C n^2\kappa^2_n \sum_{j < c_2k_n}  \theta_{nj}^2 + C n^2 \kappa^2_{n,[c_2n]} \sum_{j > c_2 n} \theta_{nj}^2 \asymp o(1) +  O(\kappa^2_{n,[c_2n]}/\kappa^2_n).
\end{equation}

 By A4, we have
\begin{equation}\label{eq102}
\lim_{c_2 \to \infty}\lim_{n \to \infty} \kappa^2_{n,[c_2n]}/\kappa^2_n \to 0,
\end{equation}

By Theorem \ref{tq2}, (\ref{eq2}) and (\ref{eq102}) together, we get sufficiency.
 \end{proof}

\begin{proof}[ Proof of Theorem  \ref{tq1}] Statement {\it i.} follows from  Theorem \ref{tq3} and Lemma \ref{ld1} provided below.

\begin{lemma} \label{ld1} Let $f_n \in \mathbb{\bar B}^s_{2\infty}(c_1)$ and $cn^{-r}\le \|f_n\| \le Cn^{-r}
$. Then, for $l_n = C_1 n^{2 - 4r}(1  +o(1)) = C_1 n^{\frac{r}{s}}(1+o(1)) $ with $C_1^{2s} > 2c_1/c$, there holds
\begin{equation}\label{d1}
\sum_{j=1}^{l_n} \theta_{nj}^2 > \frac{c}{2} n^{-2r}(1 + o(1)).
\end{equation}
\end{lemma}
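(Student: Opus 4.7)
The proof is a direct, one-line application of the Besov ball definition combined with the lower bound on $\|f_n\|$, together with a small exponent calculation; no subtlety is involved, so my plan is essentially to verify that the exponents line up as claimed.

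First I would invoke the defining property of $\mathbb{\bar B}^s_{2\infty}(c_1)$ from \eqref{vv}: since $f_n \in \mathbb{\bar B}^s_{2\infty}(c_1)$,
\[
\sum_{j>\lambda} \theta_{nj}^2 \;\le\; c_1 \lambda^{-2s} \qquad \text{for every } \lambda > 0.
\]
Applying this with $\lambda = l_n$ and subtracting from the Parseval identity $\|f_n\|^2 = \sum_{j=1}^\infty \theta_{nj}^2$ gives
\[
\sum_{j=1}^{l_n} \theta_{nj}^2 \;\ge\; \|f_n\|^2 - c_1 l_n^{-2s} \;\ge\; c\, n^{-2r} - c_1 l_n^{-2s},
\]
using the lower bound $\|f_n\|^2 \ge c n^{-2r}$ assumed in the lemma (reading the constant $c$ in the conclusion as matching the one in the $\|f_n\|^2$ bound).

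Next I would carry out the exponent check. Since $s = r/(2-4r)$, we have $2s(2-4r) = 2r$, so substituting $l_n = C_1 n^{2-4r}(1+o(1))$ yields
\[
l_n^{-2s} \;=\; C_1^{-2s}\, n^{-2s(2-4r)}(1+o(1)) \;=\; C_1^{-2s}\, n^{-2r}(1+o(1)).
\]
Combining the two displays gives
\[
\sum_{j=1}^{l_n} \theta_{nj}^2 \;\ge\; \bigl(c - c_1 C_1^{-2s}\bigr)\, n^{-2r}(1+o(1)).
\]

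Finally, the hypothesis $C_1^{2s} > 2 c_1 / c$ is exactly what is needed to force $c_1 C_1^{-2s} < c/2$, so the right-hand side is at least $\tfrac{c}{2} n^{-2r}(1+o(1))$, which is the desired bound \eqref{d1}. There is no real obstacle; the only thing to be careful about is the algebraic identity $2s(2-4r) = 2r$ coming from the definition $s = r/(2-4r)$ that pins down the choice of scale $l_n \asymp n^{r/s}$.
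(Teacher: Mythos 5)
Your proof is correct and is essentially the paper's own argument: bound the tail $\sum_{j>l_n}\theta_{nj}^2$ by $c_1 l_n^{-2s}$ from the definition of $\mathbb{\bar B}^s_{2\infty}(c_1)$, use the identity $2s(2-4r)=2r$ to see this tail is $c_1C_1^{-2s}n^{-2r}(1+o(1))\le \frac{c}{2}n^{-2r}(1+o(1))$, and subtract from $\|f_n\|^2$ via Parseval. The only (shared) caveat is the paper's slight abuse of the constant $c$, treating $\|f_n\|^2\ge c\,n^{-2r}$ rather than $c^2 n^{-2r}$, which you correctly flagged and which does not affect the substance.
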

Proof. Let $f_n \in c_1 U$. Then we have
\begin{equation*}
l_n^{2s} \sum_{j=l_n}^\infty \theta_{nj}^2 = C_1^{2s} n^{2r}  \sum_{j=l_n}^\infty \theta_{nj}^2(1 + o(1))\le c_1(1 + o(1)).
\end{equation*}
Hence
\begin{equation} \label{ucc1}
\sum_{j=l_n}^\infty \theta_{nj}^2 \le c_1 C_1^{-2s} n^{-2r} \le \frac{c}{2} n^{-2r}(1 + o(1)).
\end{equation}
Therefore (\ref{d1}) holds. \end{proof}

\begin{proof}[ Proof of Theorem  \ref{tq1} ]  Suppose  opposite that {\it ii.} does not valid.
Then  $f = \sum_{j=1}^\infty \tau_{j}\,\phi_j  \notin \mathbb{\bar B}^s_{2\infty}$. This implies that there is sequence $m_l$, $m_l \to \infty$ as $l \to \infty$, such that
\begin{equation}\label{u5}
m_l^{2s} \sum_{j=m_l}^\infty \tau_{j}^2 = C_l
\end{equation}
with $C_l \to \infty$ as $l \to \infty$.

Define a sequence $\etab_l = \{\eta_{lj}\}_{j=1}^\infty$ such that
$\eta_{lj} = 0$ if $j  < m_l$ and $\eta_{lj} = \tau_j$ if $j \ge m_l$.

Since $V$ is convex and ortho-symmetric we have  $\tilde f_l=  \sum_{j=1}^\infty \eta_{lj}\,\phi_j \in V$.

For alternatives $\tilde f_l$ we define sequence $ n_l$ such that
\begin{equation}\label{u5b}
\|\etab_l\|^2 \asymp n_l^{-2r}\asymp m_l^{-2s} C_l .
\end{equation}
Then
\begin{equation}\label{u7}
n_l \asymp C_l^{-1/(2r)} m_l^{s/r} = C_l^{-1/(2r)} m_l^{\frac{1}{2 - 4r}}.
\end{equation}
Therefore we get
\begin{equation}\label{u10}
m_l \asymp C_l^{(1-2r)/r}n_l^{2-4r}.
\end{equation}
By A4, (\ref{u10}) implies
\begin{equation}\label{u9}
\kappa^2_{n_l m_l} = o(\kappa^2_{n_l}).
\end{equation}
Using (\ref{u1}), A2 and (\ref{u9}), we get
\begin{equation}\label{u12}
\begin{split}&
A_{n_l}(\etab_l) = n_l^2 \sum_{j=1}^{\infty} \kappa_{n_lj}^2\eta_{jl}^2 \le n_l^{2} \kappa^2_{m_l n_l}\sum_{j=m_l}^{\infty} \theta_{n_lj}^2\\& \asymp n_l^{2-2r}\kappa^2_{n_l m_l} = O(\kappa^2_{n_l m_l }\kappa^{-2}_{n_l}) = o(1) .
\end{split}
\end{equation}
By  Theorem \ref{tq2},  (\ref{u12}) implies $n_l^{-r}$-inconsistency of   sequence of alternatives $\tilde f_l$. \end{proof}
\begin{proof}[ Proof of Theorem \ref{tq7}]  Theorem \ref{tq7} follows  from Lemmas \ref{ld3} -- \ref{ld6}.
\begin{lemma}\label{ld3} For any  $c$ and any $C$ there is $\mathbb{\bar B}^s_{2\infty}(P_0)$ such that, if $
f_n = \sum_{j =1}^{ck_n} \theta_{nj} \phi_j,
$ and $\| f_n \| \le C n^{-r}$,
then  $f_n \in \mathbb{\bar B}^s_{2\infty}(P_0)$.
\end{lemma}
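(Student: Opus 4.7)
The plan is to verify directly that the sequence $\theta_{nj}$, being supported on $\{1,\ldots,[ck_n]\}$ and having bounded $\ell_2$-norm of order $n^{-r}$, satisfies the Besov tail bound with a constant depending only on $c$ and $C$. The crux is that the exponent $s$ is precisely tuned to $r$ and $k_n$ so that the worst-case product $\lambda^{2s}\cdot \|f_n\|^2$ is of order one.

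First I would record the arithmetic identity that makes the scaling work. Since $s=\frac{r}{2-4r}$, equivalently $r=\frac{2s}{1+4s}$, and since A3 together with (\ref{u1}) gives $k_n \asymp n^{2-4r}$, it follows that
\begin{equation*}
k_n^{2s} \asymp n^{(2-4r)\cdot 2s} = n^{2r}.
\end{equation*}
Fix a constant $M$ with $k_n^{2s}\le M\,n^{2r}$ for all $n$; $M$ depends only on the implicit constants in A3 and (\ref{u1}).

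Next I would split the supremum in the definition (\ref{vv}) of $\mathbb{\bar B}^s_{2\infty}(P_0)$ into two ranges. If $\lambda\ge ck_n$, then $\sum_{j>\lambda}\theta_{nj}^2=0$ because $f_n$ has no Fourier coefficients beyond $[ck_n]$, so the contribution to the supremum is zero. If $0<\lambda<ck_n$, then the crude bound
\begin{equation*}
\sum_{j>\lambda}\theta_{nj}^2 \le \sum_{j=1}^{\infty}\theta_{nj}^2 = \|f_n\|^2 \le C^2 n^{-2r}
\end{equation*}
combined with $\lambda^{2s}\le (ck_n)^{2s} = c^{2s}k_n^{2s} \le c^{2s} M\, n^{2r}$ gives
\begin{equation*}
\lambda^{2s}\sum_{j>\lambda}\theta_{nj}^2 \le c^{2s}M C^2.
\end{equation*}

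Therefore, choosing $P_0 := c^{2s} M C^2$, which depends only on $c$, $C$ and the universal constants from A3 and (\ref{u1}), we obtain $f_n\in \mathbb{\bar B}^s_{2\infty}(P_0)$, as required. There is no real obstacle; the content of the lemma is exactly the bookkeeping that the exponent $s$ in the Besov body is calibrated to $k_n$ and $r$ so that truncation at level $ck_n$ of any function of $\mathbb{L}_2$-norm $\lesssim n^{-r}$ automatically lies in a Besov ball of radius controlled only by $c$ and $C$.
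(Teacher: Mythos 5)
Your proof is correct and follows essentially the same route as the paper: the key point in both is the scaling identity $k_n^{2s}\asymp n^{2r}$ (from $s=\frac{r}{2-4r}$ and $k_n\asymp n^{2-4r}$) combined with bounding the tail sum by the full $\mathbb{L}_2$-norm $\|f_n\|^2\le C^2 n^{-2r}$. Your explicit split of the supremum over $\lambda\ge ck_n$ and $\lambda<ck_n$ merely spells out what the paper's one-line computation leaves implicit, and your choice $P_0=c^{2s}MC^2$ matches the paper's conclusion up to constants.
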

\begin{proof} Let $C_1$ be such that $k_n = C_1 n^{r/s}(1 + o(1))$. Then we have
\begin{equation*}
k_n^{2s} \sum_{j=1}^{ck_n} \theta_{nj}^2 \le  C_1 n^{2r}  \sum_{j=1}^{\infty} \theta_{nj}^2  (1 + o(1)) < C C_1 (1 + o(1)).
\end{equation*}
\end{proof}
\begin{lemma}\label{ld4} Necessary conditions  in Theorem \ref{tq7} are fulfilled.
\end{lemma}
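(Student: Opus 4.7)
The plan is to prove the necessary direction by constructing $f_{1n}$ explicitly as a Fourier truncation of $f_n$. First I would invoke Theorem \ref{tq3}: the assumed consistency of $f_n$ with $cn^{-r} \le \|f_n\| \le Cn^{-r}$ yields constants $c_1, c_2$ and $n_0$ such that
\begin{equation*}
\sum_{|j| < c_2 k_n} |\theta_{nj}|^2 > c_1 n^{-2r}
\end{equation*}
for all $n > n_0$. Write $f_n = \sum_{j \ge 1}\theta_{nj}\phi_j$.

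Next I would define $f_{1n} = \sum_{j=1}^{[c_2 k_n]}\theta_{nj}\phi_j$ (extending the definition arbitrarily for $n \le n_0$, which does not affect the asymptotic claim). This choice makes $f_n - f_{1n} = \sum_{j > c_2 k_n}\theta_{nj}\phi_j$ automatically orthogonal to $f_{1n}$, since their Fourier supports are disjoint in the basis $\{\phi_j\}$. Consequently \eqref{ma1} follows by Pythagoras. The lower bound $\|f_{1n}\|^2 > c_1 n^{-2r}$ is precisely the concentration inequality supplied by Theorem \ref{tq3}, while the upper bound $\|f_{1n}\| \le \|f_n\| \le C n^{-r}$ is immediate from orthogonality. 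Thus $f_{1n}$ meets the rate requirement $\sqrt{c_1}\, n^{-r} \le \|f_{1n}\| \le C n^{-r}$.

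Finally, membership of $f_{1n}$ in a suitable maxiset is delivered by Lemma \ref{ld3}: taking its parameters to be $c := c_2$ and $C$, one obtains a $P_0$ such that $f_{1n} \in \mathbb{\bar B}^s_{2\infty}(P_0)$ for all $n$. This completes the decomposition required by the necessary direction of Theorem \ref{tq7}.

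There is essentially no obstacle once Theorems \ref{tq3} and Lemma \ref{ld3} are in hand; the argument is a Fourier truncation packaged into the language of maxisets. The only delicate point is to fix the truncation level at exactly $c_2 k_n$, where $c_2$ is the constant furnished by Theorem \ref{tq3}, so that the lower norm bound on $f_{1n}$ and the hypothesis $\|f_{1n}\|\le Cn^{-r}$ of Lemma \ref{ld3} hold simultaneously and with the same constants.
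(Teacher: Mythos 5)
Your argument is correct, but it proves the wrong implication relative to what the paper assigns to Lemma \ref{ld4}. The truncation construction you give — set $f_{1n}=\sum_{j\le [c_2k_n]}\theta_{nj}\phi_j$ with $c_2$ from Theorem \ref{tq3}, get (\ref{ma1}) by Pythagoras, the lower bound from (\ref{con2}), and maxiset membership from Lemma \ref{ld3} — is essentially verbatim the paper's proof of Lemma \ref{ld6}, i.e.\ the implication ``$f_n$ consistent $\Rightarrow$ the decomposition exists.'' (The author's wording ``necessary conditions'' invites your reading, but the paper's own proof of Lemma \ref{ld4} addresses the converse.) What Lemma \ref{ld4} actually establishes, and what is needed to close the ``if and only if'' of Theorem \ref{tq7} — in particular its final ``Therefore'' sentence about arbitrary $f_{1n}$ and arbitrary orthogonal $f_{2n}$ — is: if $f_n=f_{1n}+f_{2n}$ where $f_{1n}$ is \emph{any} element of some $\mathbb{\bar B}^s_{2\infty}(P_0)$ with $c_1n^{-r}\le\|f_{1n}\|\le C_1n^{-r}$ and (\ref{ma1}) holds, then $f_n$ is consistent.

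Your proposal does not touch that direction, and it cannot be obtained by truncation, because there $f_{1n}$ is given in advance and need not be a truncation of $f_n$: its Fourier coefficients $\eta_{nj}$ may overlap those of $f_{2n}$ at all frequencies, so consistency of $f_n$ (a lower bound on $\sum_{j<ck_n}\theta_{nj}^2$ via Theorem \ref{tq3}) does not follow from orthogonality alone. The paper's proof needs two additional ingredients that are absent from your write-up: first, the tail estimate (\ref{uh11}) — for any $f_{1n}\in\mathbb{\bar B}^s_{2\infty}(P_0)$ with $\|f_{1n}\|\le C_2n^{-r}$ one has $\sum_{j>c_2k_n}\eta_{nj}^2<\delta n^{-2r}$ for a suitable $c_2(\delta)$, which is where membership in the maxiset is genuinely used; second, the Cauchy--Schwarz control (\ref{dub2}) of the cross terms between $\theta_{nj}$ and $\eta_{nj}$ beyond $ck_n$. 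Combining these with (\ref{ma1}) gives the key inequality (\ref{dub3}), $\sum_{j<ck_n}\theta_{nj}^2\ge\|f_{1n}\|^2-\delta n^{-2r}-C\delta^{1/2}n^{-2r}$, after which the sufficiency part of Theorem \ref{tq3} yields consistency of $f_n$. So, as a proof of the statement the paper proves under this label, your attempt has a genuine gap: it replaces the required implication by its (easier) converse, which the paper handles separately in Lemma \ref{ld6}.
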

\begin{proof} Let $f_n = \sum_{j=1}^\infty \theta_{nj}  \phi_j$ and let   $f_{1n} = \sum_{j=1}^\infty \eta_{nj}  \phi_j$. Denote $  \zeta_{nj}  = \theta_{nj} - \eta_{nj}$, $1 \le j < \infty$.

For any $\delta > 0$,  $c_1$ and $C_2$, there is $c_2$ such that, for each sequence $f_{1n} \in \mathbb{\bar B}^s_{2\infty}(P_0)$, $\| f_{1n} \| \le C_2 n^{-r}$, there holds
\begin{equation}\label{uh11}
\sum_{j>c_2 k_n} \eta_{nj}^2 < \delta n^{-2r}.
\end{equation}
To prove (\ref{uh11}) it suffices to put $c_2 k_n = l_n = C_1 n^{2-4r}(1 + o(1))$ in (\ref{ucc1}) with $C_1^{2s} > \delta c_1$.

We have
\begin{equation}\label{dub2}
\begin{split}&
J_n=\left| \sum_{j > ck_n} \theta_{nj}^2 - \sum_{j > ck_n} \zeta_{nj}^2\right| \le  \sum_{j > ck_n} |\eta_{nj}(2\theta_{nj} - \eta_{jn})|\\& \le \left( \sum_{j > ck_n}\eta_{nj}^2\right)^{1/2}\left(2\left( \sum_{j > ck_n} \theta_{nj}^2\right)^{1/2} + \left( \sum_{j > ck_n}\eta_{nj}^2\right)^{1/2}\right)
\le  C\delta^{1/2}n^{-2r}.
\end{split}
\end{equation}
By (\ref{ma1}), using (\ref{uh11}) and (\ref{dub2}), we get
\begin{equation}\label{dub3}
\begin{split}&
 \sum_{j < ck_n} \theta_{nj}^2=
 \sum_{j=1}^\infty \eta_{nj}^2  + \sum_{j=1}^\infty \zeta_{nj}^2 - \sum_{j \ge ck_n} \theta_{nj}^2 \ge
  \sum_{j < ck_n} \eta_{nj}^2  -J_n\\&
 \ge  \sum_{j < ck_n} \eta_{nj}^2   - C\delta^{1/2}n^{-2r}
 \ge \|f_{1n}\|^2 - \delta n^{-2r} - C\delta^{1/2}n^{-2r}
 .
 \end{split}
\end{equation}
By Theorem \ref{tq3}, (\ref{dub3}) implies consistency of sequence $f_n$. \end{proof}

\begin{lemma}\label{ld6} Let sequence of alternatives $f_{n}$, $cn^{-r}\le \|f_{n}\| \le Cn^{-r}
$, be consistent. Then (\ref{ma1}) holds.
\end{lemma}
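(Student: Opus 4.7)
The plan is to construct $f_{1n}$ explicitly as a low-frequency truncation of $f_n$ and then verify the three requirements (orthogonality, norm lower bound, Besov membership) one by one.

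First I would apply Theorem \ref{tq3} to the consistent sequence $f_n$ with Fourier coefficients $\theta_{nj}$. This gives constants $c_2, c_3 > 0$ and $n_0$ such that
$$\sum_{j < c_2 k_n} \theta_{nj}^2 > c_3 n^{-2r}$$
for all $n > n_0$. Set $M_n = [c_2 k_n]$ and define
$$f_{1n} = \sum_{j = 1}^{M_n} \theta_{nj}\phi_j, \qquad f_n - f_{1n} = \sum_{j > M_n} \theta_{nj}\phi_j.$$
By construction these two are orthogonal in $\mathbb{L}_2(0,1)$, so Pythagoras yields (\ref{ma1}) automatically. This takes care of the orthogonality and the norm identity.

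Next I would verify the two-sided bound on $\|f_{1n}\|$. The lower bound $\|f_{1n}\|^2 \ge c_3 n^{-2r}$ is exactly the display from Theorem \ref{tq3} just recalled, while the upper bound $\|f_{1n}\| \le \|f_n\| \le C n^{-r}$ is immediate from Parseval and the hypothesis on $f_n$. Hence $c_1 n^{-r} \le \|f_{1n}\| \le C_1 n^{-r}$ with $c_1 = c_3^{1/2}$ and $C_1 = C$.

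Finally, to identify a maxiset containing $f_{1n}$, I would apply Lemma \ref{ld3} with $c = c_2$ and $C$ as above: since $f_{1n}$ is supported on the first $c_2 k_n$ frequencies and has norm bounded by $C n^{-r}$, the lemma produces a $P_0 = P_0(c_2, C)$ such that $f_{1n} \in \mathbb{\bar B}^s_{2\infty}(P_0)$ for every $n$. Combining the three items completes the proof.

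There is no real obstacle here: once one sees that the correct candidate for $f_{1n}$ is the low-frequency truncation at level $c_2 k_n$, Theorem \ref{tq3} supplies the lower norm bound, Parseval supplies the upper one, and Lemma \ref{ld3} supplies the Besov membership, with orthogonality built into the construction. The only delicate point is choosing the truncation level $c_2 k_n$ from Theorem \ref{tq3} (rather than an arbitrary level) to ensure the lower norm bound holds for all large $n$ with uniform constants.
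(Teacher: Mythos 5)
Your construction is exactly the paper's: the paper also invokes Theorem \ref{tq3} to get a lower bound on $\sum_{j < c_2 k_n}\theta_{nj}^2$, defines $f_{1n}$ as the truncation of $f_n$ to frequencies below $c_2 k_n$ (so orthogonality and (\ref{ma1}) are automatic), and then uses Lemma \ref{ld3} to place $f_{1n}$ in a ball $\mathbb{\bar B}^s_{2\infty}(P_0)$. Your added verification of the two-sided norm bound via Parseval is a harmless elaboration of the same argument.
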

\begin{proof} By Theorem \ref{tq3}, there  are $c_1$ and $c_2$  such that sequence $f_{1n} = \sum_{j<c_2 k_n} \theta_{nj}  \phi_j$ is consistent and $\|f_{1n}\| \ge c_1 n^{-r}$.
By Lemma \ref{ld3}, there is
$\mathbb{\bar B}^s_{2\infty}(P_0)$ such that $f_{1n} \in \mathbb{\bar B}^s_{2\infty}(P_0)$.\end{proof} \end{proof}

\begin{proof}[ Proof of Theorem \ref{tq11}] By A4 and (\ref{u1}), for any $\delta > 0$, there is $c$ such that we have
\begin{equation}\label{dub29}
n^2 \sum_{j > ck_n} \kappa_{nj}^2 \theta_{nj}^2 \le \delta.
\end{equation}
By Lemma \ref{ld3}, there is $P_0$ such that $f_{1n} = \sum_{j <ck_n} \theta_{nj} \phi_j \in \mathbb{\bar B}^s_{2\infty}(P_0)$. By Theorem \ref{tq2} and (\ref{dub29}), for sequence of alternatives $f_{1n}$,
(\ref{uuu}) and (\ref{uu1}) hold. \end{proof}
\begin{proof}[ Proof of Theorem \ref{tq5}]  Let $f_n = \sum_{j=1}^\infty \theta_{nj} \phi_j$  and let  $f_{1n} = \sum_{j=1}^\infty \eta_{nj} \phi_j$. Denote $\etab_n = \{\eta_{nj}\}_{j=1}^\infty$.

By Cauchy inequality, we have
\begin{equation} \label{co1}
\begin{split}&
|A_n(\thetab_n) - A_n(\thetab_n + \etab_n)| = n^2\Bigl| \sum_{j=1}^\infty \kappa_{nj}^2 \theta_{nj}^2  - \sum_{j=1}^\infty \kappa_{nj}^2 (\theta_{nj} + \eta_{nj})^2\Bigr|\\& \le
2\,A_n^{1/2}(\thetab_n)A_n^{1/2}(\etab_n) + A_n(\etab_n) .
\end{split}
\end{equation}
By Theorem \ref{tq2}, inconsistency of sequence $f_{1n}$ implies $A_n(\etab_n) = o(1)$ as $n \to \infty$. Therefore, by (\ref{co1}),  $|A_n(\thetab_n) - A_n(\thetab_n + \etab_n)| = o(1)$ as $n \to \infty$. Hence, by Theorem \ref{tq2}, we get Theorem \ref{tq5}. \end{proof}

\begin{proof}[ Proof of Theorem \ref{tq6} ] For proof of sufficiency suppose  opposite. Then there is sequence $n_i$, $n_i \to \infty$ as $i \to \infty$ such that $f_{n_i} = f_{1n_i} + f_{2n_i}$,
\begin{equation}\label{dub401}\|f_{n_i}\|^2 = \|f_{1n_i}\|^2 + \|f_{2n_i}\|^2,
\end{equation}
 $c_1 n_i^{-r}<\|f_{1n_i}\| < C_1 n_i^{-r}$, $c_2 n_i^{-r}<\|f_{2n_i}\| < C_2 n_i^{-r}$ and sequence $f_{2n_i}$ is inconsistent.
 \vskip 0.1cm
 Let $f_{n_i} = \sum_{j=1}^\infty \theta_{n_ij}\phi_j$, $f_{1n_i} = \sum_{j=1}^\infty \theta_{1n_ij}\phi_j$ and $f_{2n_i} = \sum_{j=1}^\infty \theta_{2n_ij}\phi_j$.

  Then, by Theorem  \ref{tq4} and by (\ref{con19}),  we get that there are $\varepsilon_i$,  $\varepsilon_i\to 0$ and $C_i = C(\varepsilon_i)$,  $C_i \to \infty$ as $i \to \infty$ such that
\begin{equation}\label{dub301}
\sum_{j > C_ik_n} \theta_{n_ij}^2 =  \sum_{j > C_ik_n} (\theta_{1n_ij} + \theta_{2n_ij})^2 = o(n^{-2r}), \quad
 \sum_{j < C_ik_n} \theta_{2n_ij}^2  = o(n^{-2r}).
\end{equation}
By (\ref{dub401}) and (\ref{dub301}), we get
\begin{equation}\label{dub402}
\sum_{j = 1}^\infty \theta_{n_ij}^2 = \sum_{j < C_ik_n} \theta_{n_ij}^2 + o(n^{-2r}) = \sum_{j < C_ik_n} \theta_{1n_ij}^2 + o(n^{-2r}).
\end{equation}
Hence, by (\ref{dub401}), we get $\|f_{2n_i}\| = o(n^{-r})$. We come to contradiction.

To prove  necessary conditions suppose (\ref{con19}) does not hold. Then there are $\varepsilon > 0$ and sequences $C_i \to \infty$, $n_i \to \infty$ as $i \to \infty$ such that
\begin{equation*}
\sum_{j > C_ik_{n_i}} \theta_{n_ij}^2 > \varepsilon n_i^{-2r} .
\end{equation*}
Then, by A4 and (\ref{u1}), we get
\begin{equation*}
n_i^2\sum_{j > C_ik_{n_i}} \kappa^2_{n_ij}\theta_{n_ij}^2 =o(1).
\end{equation*}
Therefore, by Theorem \ref{tq2}, subsequence $f_{1n_i} = \sum_{j > C_ik_{n_i}} \theta_{n_ij} \phi_j$ is inconsistent. \end{proof}
\begin{proof}[ Proof of Theorem \ref{tq12}] For proof of necessary conditions, it suffices to put $$f_{1n} = \sum_{j < C_1(\epsilon)k_n} \theta_{nj}\phi_j.$$  By Lemma \ref{ld3}, there is $P_0 > 0$ such that $f_{1n} \in \mathbb{\bar B}^s_{2\infty}(P_0)$.
Proof of sufficiency  is simple and is omitted. \end{proof}

\begin{proof}[ Proof of Theorem \ref{tq8}] Necessary conditions are rather evident, and proof is omitted. Proof of sufficiency is also simple.
\begin{lemma}\label{ld5} Let for sequence $f_n$, $c n^{-r}<\| f_n \| < C n^{-r}$, (\ref{ma2}) hold. Then sequence $f_n$ is purely $n^{-r}$-consistent.
\end{lemma}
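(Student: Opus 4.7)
The plan is to argue by contradiction, using (\ref{ma2}) to force any orthogonal ``inconsistent remainder'' of $f_n$ to have vanishing norm. Suppose $f_n$ is not purely $n^{-r}$-consistent. By the definition in subsection~\ref{ss2.1}, there exist a subsequence $n_i \to \infty$ and a decomposition $f_{n_i} = f_{1n_i} + f_{2n_i}$ with $f_{1n_i}\perp f_{2n_i}$, $\|f_{2n_i}\| > c_1 n_i^{-r}$, and such that the subsequence $f_{2n_i}$ is inconsistent. The Pythagorean identity $\|f_{1n_i}\|^2+\|f_{2n_i}\|^2 = \|f_{n_i}\|^2$ together with $\|f_{n_i}\| < Cn_i^{-r}$ automatically gives the upper bound $\|f_{2n_i}\| \le Cn_i^{-r}$, so $\{f_{2n_i}\}$ is an inconsistent sequence of the type admissible in the hypothesis (\ref{ma2}).

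The key step is to apply (\ref{ma2}) with $-f_{2n_i}$ in place of $f_{1n_i}$. For this I must check that $-f_{2n_i}$ is again an inconsistent sequence with norms of order $n_i^{-r}$: the norm is manifestly unchanged, while inconsistency for the quadratic test statistics under consideration is characterized in Theorem~\ref{tq4} by the behavior of $\sum_{|j|<c_2 k_n}|\theta_{nj}|^2$, which is invariant under the sign change $\theta_{nj} \mapsto -\theta_{nj}$. Hence $-f_{2n_i}$ is admissible.

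Invoking the hypothesis (\ref{ma2}) yields
\begin{equation*}
\|f_{n_i}-f_{2n_i}\|^2 \;=\; \|f_{n_i}\|^2 + \|f_{2n_i}\|^2 + o(n_i^{-2r}),
\end{equation*}
whereas the orthogonal decomposition gives $\|f_{n_i}-f_{2n_i}\|^2 = \|f_{1n_i}\|^2 = \|f_{n_i}\|^2 - \|f_{2n_i}\|^2$. Subtracting these two equalities produces $2\|f_{2n_i}\|^2 = o(n_i^{-2r})$, which contradicts the lower bound $\|f_{2n_i}\|^2 > c_1^2 n_i^{-2r}$. The only subtle point is the sign-invariance of inconsistency, which is immediate from the Fourier-coefficient criterion of Theorem~\ref{tq4}; everything else is elementary Hilbert-space Pythagoras.
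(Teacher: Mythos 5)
Your argument is correct, but it is genuinely different from the paper's. The paper proves this lemma through Theorem \ref{tq6}: if $f_n$ is not purely $n^{-r}$-consistent, the contrapositive of the sufficiency part of Theorem \ref{tq6} supplies sequences $c_{n_i}\to\infty$ with $\sum_{j>c_{n_i}k_{n_i}}\theta_{n_ij}^2$ bounded below by a constant times $n_i^{-2r}$; taking $f_{1n_i}$ to be that high-frequency tail (inconsistent by A4, (\ref{u1}) and Theorem \ref{tq2}), the inner product $(f_{n_i},f_{1n_i})=\|f_{1n_i}\|^2$ is bounded below, so (\ref{ma2}) fails. You instead work straight from the definition of pure consistency: you take the given orthogonal inconsistent component $f_{2n_i}$ and feed $-f_{2n_i}$ into (\ref{ma2}), getting $2\|f_{2n_i}\|^2=o(n_i^{-2r})$ by Pythagoras, a contradiction. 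Your route is shorter and essentially statistic-free: it does not need A1--A5 or the Fourier characterization of Theorem \ref{tq6}, only the sign-invariance of inconsistency, and even that can be dropped by using $+f_{2n_i}$ instead of $-f_{2n_i}$, since orthogonality of $f_{1n_i}$ and $f_{2n_i}$ gives $(f_{n_i},f_{2n_i})=\|f_{2n_i}\|^2$ and the same contradiction; this makes the argument transfer verbatim to the kernel, chi-squared and Cramer--von Mises settings. What the paper's detour buys is that Theorem \ref{tq6} automatically covers the degenerate failure mode in which $f_n$ is not even $n^{-r}$-consistent (its condition (\ref{con19}) implies (\ref{con2}) and hence consistency via Theorem \ref{tq3}), whereas your reading of ``not purely consistent'' as ``a decomposition exists'' skips that mode; the fix is one line -- if consistency fails, Theorem \ref{tq2} yields an inconsistent subsequence $f_{n_i}$ and the trivial decomposition $f_{n_i}=0+f_{n_i}$ puts you back in your argument. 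Finally, you silently read the error term in (\ref{ma2}) as $o(n_i^{-2r})$ rather than the printed $o(n_i^{-r})$; that is the only reading under which the lemma has content (the cross term is always $O(n_i^{-2r})=o(n_i^{-r})$), it matches (\ref{con19}), and it is clearly the paper's intent, so this is a correction rather than a flaw.
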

Suppose $f_n = \sum_{j = 1}^\infty \theta_{nj} \phi_j$  is not purely $n^{-r}$-consistent.
Then, by Theorem \ref{tq6}, there are $c_1$ and sequences $n_i$,  and $c_{n_i}$, $c_{n_i} \to \infty$ as $i \to \infty$, such that
\begin{equation*}
\sum_{j > c_{n_i} k_{n_i}} \theta_{n_lj}^2 >c_1 n_i^{-r}.
\end{equation*}
Therefore, if we put $f_{1n_i} = \sum_{j > c_{n_i} k_{n_i}} \theta_{n_ij} \phi_j $, then (\ref{ma2}) does not hold. \end{proof}
\subsection{Proof of Theorems of section \ref{sec5} \label{subsec9.5}}
\begin{proof}[ Proof of  version of Theorem \ref{tq3}. ] Since $\hat K(\omega)$ is analytical function and $\hat K(0)=1$ there is $b> 0$ such that $|\hat K(\omega)| > c > 0$ for $|\omega| < b$.

Let (\ref{con2}) hold. Then we have
\begin{equation*}
\begin{split}&
T_{1n}(f_n)  = \sum_{j=-\infty}^\infty |\hat K(jh_n)|^2 |\theta_{nj}|^2 \ge \sum_{|j| h_n < b} |\hat K(jh_n)|^2 |\theta_{nj}|^2\\& \asymp
\sum_{|j| < c_2k_n} |\hat K(jh_n)|^2 |\theta_{nj}|^2
 \asymp  n^{-1}h_n^{-1/2} \asymp n^{-2r}
 \end{split}
\end{equation*}
for $c_2k_n < b h_n^{-1}$. By Theorem \ref{tk2}, this implies consistency.\end{proof}

\begin{proof}[ Proof of  version of Theorem \ref{tq1}] We verify only {\it iv.}.
Let $f = \sum_{j=-\infty}^\infty \tau_j \phi_j \notin \mathbb{B}^s_{2\infty}$.  Then there is  sequence $m_l$, $m_l \to \infty$ as $l \to \infty$, such that
\begin{equation}\label{bb5}
m_l^{2s} \sum_{|j|\ge m_l}^\infty |\tau_j|^2 = C_l
\end{equation}
with $C_l \to \infty$ as $l \to \infty$.

It is clear that  we can define a sequence $m_l$ such that
\begin{equation}\label{gqq}
m_l^{2s} \sum_{m_l \le |j| \le 2m_{l}} |\tau_j|^2 > \delta C_l,
\end{equation}
where $\delta$, $0< \delta <1/2$,  does not depend on $l$.

Otherwise, we have
 \begin{equation*}
 2^{2s(i-1)} m_l^{2s} \sum_{j=2^{i-1}m_l}^{2^{i}m_l}  \tau_j^2  < \delta  C_l
\end{equation*}
 for  all $i = 1,2,\ldots$,  that implies that the left hand-side of
 (\ref{bb5}) does not exceed $2\delta C_l$.

Define a sequence $\etab_l = \{\eta_{lj}\}_{j=-\infty}^\infty$ such that
 $\eta_{lj} = \tau_j$ if  $|j| \ge m_{l}$, and $\eta_{lj} = 0$ otherwise.

Denote
$$
\tilde f_l(x) =  \sum_{j=-\infty}^\infty \eta_{lj} \exp\{2\pi ijx\}.
$$
 For alternatives $\tilde f_l(x)$ we define sequence $n_l$ such that $\|\tilde f_l(x)\| \asymp n_l^{-r}$.

 Then
\begin{equation*}
n_l \asymp C_l^{-1/(2r)} m_l^{s/r}.
\end{equation*}
We have $|\hat K(\omega)| \le \hat K(0) = 1$ for all $\omega \in R^1$ and $|\hat K(\omega)| > c > 0$ for all $ |\omega| < b$. Hence, if we put $h_l= h_{n_l} =2^{-1}b^{-1}m_l^{-1}$, then, by (\ref{gqq}), there is $C > 0$  such that, for all $h> 0$, there holds
\begin{equation*}
T_{1n_l}(\tilde f_l,h_l) = \sum_{j=-\infty}^\infty |\hat K(jh_l)\,\eta_{lj}|^2 > C \sum_{j=-\infty}^\infty |\hat K(jh)\,\eta_{lj}|^2 = C T_{1n_l}(\tilde f_l,h).
\end{equation*}
Thus we can choose $h = h_l$ for further reasoning.

By (\ref{gqq}), we get
\begin{equation}\label{k101}
T_{1n_l}(\tilde f_l) =  \sum_{|j|>m_l}\, |\hat K(jh_l) \eta_{lj}|^2  \asymp \sum_{j=m_l}^{2m_l} |\eta_{lj}|^2 \asymp n_l^{-2r}.
\end{equation}
If we put in estimates (\ref{u7}),(\ref{u10}), $k_l = [h_{n_l}^{-1}]$ and $m_l = k_l$, then we get
\begin{equation}\label{k102}
h_{n_l}^{1/2} \asymp C_l^{(2r-1)/2}n_l^{2r-1}.
\end{equation}
By (\ref{k101}) and (\ref{k102}), we get
\begin{equation*}
n_l T_{1n_l}(\tilde f_l)h_{n_l}^{1/2} \asymp C_l^{-(1-2r)/2}.
\end{equation*}
By Theorem \ref{tk2}, this implies inconsistency of sequence of alternatives $\tilde f_l$. \end{proof}
\subsection{Proof of Theorems of section \ref{sec6} \label{subsec9.6}}
  We have
\begin{equation*}
n^{-1}m_n^{-1}T_n(F) = \sum_{l=0}^{m_n-1}\left(\int_{l/m_n}^{(l+1)/m_n} f(x) dx \right)^2.
\end{equation*}
Using representation  $f(x)$ as Fourier series
\begin{equation*}
f(x) = \sum_{j=-\infty}^\infty \theta_j \exp\{2\pi ijx\},
\end{equation*}
we get
\begin{equation*}
\int_{l/m_n}^{(l+1)/m_n} f(x) dx = \sum_{j=-\infty}^\infty \frac{\theta_j}{2\pi ij}\exp\{2\pi ijl/m_n\} ( \exp\{2\pi ij/m_n\} - 1)
\end{equation*}
for $ 1 \le l < m_n$.

In what follows, we shall use  the following agreement $0/0= 0$.
\begin{lemma}\label{ch1} There holds
\begin{equation}\label{ach1}
n^{-1}m_n^{-1}T_n(F) = m_n \sum_{k=-\infty}^\infty \sum_{j \ne km_n} \frac{\theta_j \bar\theta_{j-km_n}}{4\pi^2j(j-km_n)}(2 - 2 \cos(2\pi j/m_n)).
\end{equation}
\end{lemma}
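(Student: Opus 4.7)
The plan is a direct Fourier-series calculation starting from the two displayed identities that immediately precede the lemma. Write $I_l \,\doteq\, \int_{l/m_n}^{(l+1)/m_n} f(x)\,dx$. From the stated formula,
$$
I_l = \sum_{j \neq 0} \frac{\theta_j}{2\pi i j}\,\exp\{2\pi i j l/m_n\}\,(\exp\{2\pi i j/m_n\} - 1),
$$
where the $j=0$ term is absent because $1+f$ is a probability density, so $\theta_0 = \int f\,dx = 0$. I would then insert this into $n^{-1}m_n^{-1}T_n(F) = \sum_{l=0}^{m_n-1} |I_l|^2$ and expand the square as a double sum over indices $j_1,j_2 \neq 0$.

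The core step is to swap the finite sum over $l$ with the double sum over Fourier indices and apply the orthogonality relation
$$
\sum_{l=0}^{m_n-1} \exp\{2\pi i (j_1 - j_2) l/m_n\} = m_n \cdot \mathbf{1}\{j_1 \equiv j_2 \pmod{m_n}\}.
$$
This forces $j_2 = j_1 - k m_n$ for some $k \in \mathbb{Z}$, which is exactly the reindexing that appears in the statement. After substitution the summand carries the factor $(\exp\{2\pi i j_1/m_n\} - 1)\,\overline{(\exp\{2\pi i j_2/m_n\} - 1)}$, and since $\exp\{-2\pi i(j_1 - km_n)/m_n\} = \exp\{-2\pi i j_1/m_n\}$ because $\exp\{2\pi i k\} = 1$, this product collapses to $2 - (e^{2\pi i j/m_n} + e^{-2\pi i j/m_n}) = 2 - 2\cos(2\pi j/m_n)$, writing $j = j_1$.

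Combining with the denominator $(2\pi i j_1)\,\overline{(2\pi i j_2)} = 4\pi^2 j(j-km_n)$, I obtain
$$
n^{-1} m_n^{-1} T_n(F) = m_n \sum_{k \in \mathbb{Z}} \sum_{\substack{j \neq 0 \\ j - km_n \neq 0}} \frac{\theta_j\,\bar\theta_{j-km_n}}{4\pi^2 j(j-km_n)}\,(2 - 2\cos(2\pi j/m_n)),
$$
which matches (\ref{ach1}): the constraint $j \neq km_n$ is precisely $j - km_n \neq 0$, and the additional $j \neq 0$ restriction can be dropped because $\theta_0 = 0$ makes those terms vanish (for the remaining $k \neq 0$ cases with $j = 0$, the factor $\theta_0$ is zero; the $k = 0$, $j = 0$ case is excluded by $j \neq km_n$).

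The only non-routine point is being careful about which index pairs are legitimately in the sum and confirming that the $0/0$ convention flagged before the lemma handles the excluded diagonal correctly. Everything else is bookkeeping — expanding a square, one application of discrete orthogonality, and simplifying a product of complex exponentials using $\exp\{2\pi i k\} = 1$; no estimates or probabilistic arguments enter.
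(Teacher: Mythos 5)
Your proof is correct and takes essentially the same route as the paper's: expand $\sum_{l=0}^{m_n-1}|I_l|^2$ into a double sum over Fourier indices, use the discrete orthogonality of $\exp\{2\pi i (j_1-j_2)l/m_n\}$ to annihilate all pairs with $j_1\not\equiv j_2 \pmod{m_n}$ (the paper's term $J_2=0$), and collapse the remaining exponential factors to $2-2\cos(2\pi j/m_n)$ via $\exp\{2\pi i k\}=1$. Your handling of the index bookkeeping ($\theta_0=0$ and the $0/0$ convention) matches the paper's treatment, so nothing further is needed.
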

\begin{proof}[Proof of Lemma \ref{ch1}] We have
\begin{equation}\label{uh105}
\begin{split}&
n^{-1}m_n^{-1}T_n(F) =\sum_{l=0}^{m_n-1}\Bigl(\sum_{j \ne 0}\frac{\theta_j}{2\pi ij}\exp\{2\pi ijl/m_n\} ( \exp\{2\pi ij/m_n\} - 1)\Bigr)\\& \times
\Bigl(\sum_{j \ne 0} \frac{-\bar\theta_j}{2\pi ij}\exp\{-2\pi ijl/m_n\} (\exp\{-2\pi ij/m_n\} - 1)\Bigr) = J_1 + J_2
\end{split}
\end{equation}
with
\begin{equation}\label{uh106}
\begin{split}&
J_1=\sum_{l=0}^{m_n-1}\sum_{k=-\infty}^\infty\,\,\, \sum_{j_1 = j -km_n} \frac{\theta_j \bar\theta_{j_1}}{4\pi^2jj_1}\exp\{2\pi ilk\}\\&\times(\exp\{2\pi ij/m_n\} - 1)(\exp\{-2\pi ij_1/m_n\} - 1)\\&= m_n \sum_{k=-\infty}^\infty \sum_{j=-\infty}^\infty \frac{\theta_j \bar\theta_{j-k m_n}}{4\pi^2j(j-k m_n)}(2 - 2 \cos(2\pi j/m_n))
\end{split}
\end{equation}
and
\begin{equation}\label{ux}
\begin{split}&
J_2=\sum_{l=0}^{m_n-1}\sum_{j \ne 0} \sum_{j_1 \ne j-k m_n} \frac{\theta_j \bar\theta_{j_1}}{4\pi^2 j j_1} \exp\{2\pi i(j - j_1)l/m_n\}\\&\times(\exp\{2\pi ij/m_n\} - 1)(\exp\{-2\pi ij_1/m_n\} - 1) =0,
\end{split}
\end{equation}
where $j_1 \ne j-k m_n$ signifies that summation is performed over all $j_1$ such that $j_1 \ne j-k m_n$ for all integer $k$.

In the last equality of  (\ref{ux}), we make use of the identity
\begin{equation*}
 \sum_{l=0}^{m_n-1}\exp\{2\pi i(j - j_1)l/m_n\} = \frac{\exp\{2\pi i(j - j_1) m_n/m_n\} -1}{\exp\{2\pi i(j - j_1)/m_n\} -1} =0,
\end{equation*}
if $j-j_1 \ne k m_n$ for all integer $k$.

By (\ref{uh105}) - (\ref{ux}) together, we get (\ref{ach1}). \end{proof}

For any c.d.f $F$ and any $k$ denote $\tilde F_k$ the function having the derivative
$$
 1 + \tilde f_k(x)  = 1 + \sum_{|j| > k} \theta_j \exp\{2\pi ijx\}
$$
and such that $\tilde F_k(1) = 1$.

 Denote $i_n = [d m_n]$ where $d > 1+c$.
\begin{lemma}\label{lch2} There holds
\begin{equation}\label{hu}
n^{-1}m_{n}^{-2}T_{n}(\tilde F_{i_n})  \le C m_n^{-1} i_n^{-1}\sum_{|j| > i_n} |\theta_j|^2.
\end{equation}
\end{lemma}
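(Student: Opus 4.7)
The key is to recognise $T_n(\tilde F_{i_n})/n$ directly as an $\mathbb{L}_2$ quantity, which bypasses the elaborate Fourier identity of Lemma~\ref{ch1}. Since $\tilde F_{i_n}(l/m_n) - \tilde F_{i_n}((l-1)/m_n) = 1/m_n + \int_{(l-1)/m_n}^{l/m_n}\tilde f_{i_n}(x)\,dx$ (the constant $1/m_n$ cancels the subtraction in the definition of $T_n$), the defining expression for the chi-squared functional becomes
\begin{equation*}
T_n(\tilde F_{i_n}) = n\,m_n \sum_{l=1}^{m_n}\Bigl(\int_{(l-1)/m_n}^{l/m_n}\tilde f_{i_n}(x)\,dx\Bigr)^2.
\end{equation*}

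Applying the Cauchy--Schwarz inequality on each interval of length $1/m_n$ gives
$\bigl(\int_{(l-1)/m_n}^{l/m_n}\tilde f_{i_n}\bigr)^2 \le m_n^{-1}\int_{(l-1)/m_n}^{l/m_n}\tilde f_{i_n}^2\,dx$. Summing over $l$ and invoking Parseval's identity yields
\begin{equation*}
\sum_{l=1}^{m_n}\Bigl(\int_{(l-1)/m_n}^{l/m_n}\tilde f_{i_n}\Bigr)^2 \le \frac{1}{m_n}\|\tilde f_{i_n}\|^2 = \frac{1}{m_n}\sum_{|j|>i_n}|\theta_j|^2,
\end{equation*}
so that $T_n(\tilde F_{i_n}) \le n\sum_{|j|>i_n}|\theta_j|^2$. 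Dividing by $n\,m_n^2$ and using $i_n = [d\,m_n]$ with $d > 1+c$, which forces $m_n \le C\, i_n$, produces the claimed estimate
\begin{equation*}
n^{-1}m_n^{-2}T_n(\tilde F_{i_n}) \le m_n^{-2}\sum_{|j|>i_n}|\theta_j|^2 \le C m_n^{-1}i_n^{-1}\sum_{|j|>i_n}|\theta_j|^2.
\end{equation*}

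There is essentially no technical obstacle: the argument is a single application of Cauchy--Schwarz once one writes out the definition of $T_n$. The only minor point to watch is that $\tilde F_{i_n}$ need not be a genuine c.d.f.\ (the signed ``density'' $1 + \tilde f_{i_n}$ may take negative values), but this is irrelevant because $T_n$ is treated throughout as a purely algebraic quadratic functional of $F$, so the identity and the ensuing inequalities hold regardless. Alternatively, one could substitute the restricted Fourier expansion of $\tilde f_{i_n}$ into Lemma~\ref{ch1} and invoke Poisson summation to rewrite the double sum as $\sum_{l=1}^{m_n} |\hat f_{i_n}((2l-1)/(2m_n))|^2$; this longer Fourier-side route reduces in the end to the same Cauchy--Schwarz estimate and offers no advantage here.
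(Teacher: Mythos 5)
Your Cauchy--Schwarz argument is sound as far as it goes, but it only yields $n^{-1}m_n^{-2}T_n(\tilde F_{i_n}) \le m_n^{-2}\sum_{|j|>i_n}|\theta_j|^2$, and the passage from $m_n^{-2}$ to $C\,m_n^{-1}i_n^{-1}$ costs a factor $i_n/m_n = d$ (note also that the inequality you need for that step is $i_n \le d\,m_n$, not ``$m_n \le C\,i_n$'' as you wrote). So you obtain (\ref{hu}) only with a constant proportional to $d$. That is weaker than what the lemma actually asserts and what is used afterwards: in the paper's proof the constant is absolute, i.e.\ the bound retains the genuinely small factor $m_n/i_n$ in $n^{-1}T_n(\tilde F_{i_n}) \le C\,(m_n/i_n)\,\|\tilde f_{i_n}\|^2$. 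This is precisely what the later arguments rely on: in (\ref{xy1}) the lemma is applied with $m_l = o(i_l)$, and in (\ref{h12}) and (\ref{uxa2}) the ratio $i_n/m_n$ (there $c_2$, respectively $c/c_0$) is taken arbitrarily large to make the right-hand side small. Your bound cannot deliver this, and the defect is not repairable within your approach: plain Cauchy--Schwarz never uses that $\tilde f_{i_n}$ contains only frequencies $|j| > i_n$, and without that information the best possible general bound is $n^{-1}T_n(F) \le \|f\|^2$ (take $f$ itself piecewise constant on the grid of width $1/m_n$), so no factor $m_n/i_n$ with an absolute constant can appear.

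The route through Lemma \ref{ch1} is exactly where that factor comes from: after the aliasing identity, only pairs of coefficients $\theta_j$, $\theta_{j-km_n}$ with both indices of modulus larger than $i_n$ contribute, each weighted by $1/(4\pi^2 j(j-km_n))$; the inner Cauchy--Schwarz is then applied against the summable weights $(j+km_n)^{-2}$, whose sum over $|k| > d-1$ is of order $d^{-1}m_n^{-2} \asymp (m_n i_n)^{-1}$ with an absolute constant. Hence your closing claim that the Fourier-side computation ``offers no advantage'' and ``reduces to the same Cauchy--Schwarz estimate'' is exactly where the gap lies. (Your side remark that $\tilde F_{i_n}$ need not be a genuine c.d.f.\ is correct and harmless, since $T_n$ is treated throughout as an algebraic quadratic functional.)
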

\begin{proof} Denote $\eta_j = \theta_j$ if $|j|> i_n$ and $\eta_j =0$ if $|j| < i_n$.

We have
\begin{equation*}
\begin{split}&
n^{-1}m_{n}^{-2}T_{n}(\tilde F_{i_n}) =\sum_{k=-\infty}^\infty \,\,\sum_{j \ne k m_n}
\frac{\eta_j \bar\eta_{j-k m_n}}{4\pi^2j(j-k m_n)}(2 -2 \cos(2\pi j/m_n))\\&
\le C  \sum_{|j|> i_n} \Bigl|\frac{\eta_j}{j}\Bigr| \sum_{k=-\infty}^\infty  \Bigl|\frac{\eta_{j+k m_n}}{j+k m_n}\Bigr|
\\&
= C\sum_{j=1}^{m_n} \sum_{k=-\infty}^\infty \Bigl|\frac{\eta_{j+k m_n}}{j+k m_n}\Bigr|  \sum_{k_1=-\infty}^\infty
\Bigl|\frac{\eta_{j+(k+k_1)m_n}}{j+(k+k_1)m_n}\Bigr| \\&
 = C\sum_{j=1}^{m_n} \Bigl(\sum_{k=-\infty}^\infty \Bigl|\frac{\eta_{j+k m_n}}{j+k m_n}\Bigr|\Bigr)^2\\&
\le  C\sum_{j=1}^{m_n} \Bigl(\sum_{|k|> d-1} |\eta_{j+k m_n}|^2\Bigr)
  \Bigl(\sum_{|k|> d-1} (j+k m_n)^{-2}\Bigr)
\\& \le C \sum_{j=-\infty}^\infty  |\eta_j|^2 \sum_{|k| > d}(k m_n)^{-2}
 \le C m_n^{-1} i_n^{-1}\sum_{|j| > i_n} |\theta_j|^2.
\end{split}
\end{equation*}
\end{proof}
\begin{proof}[ Proof of version of Theorem \ref{tq3}.] We prove sufficiency. Suppose (\ref{con2}) holds.
Denote
$$
\tilde f_{n}  = \tilde f_{n,c_2k_n}  =\sum_{|j| > c_2 k_n} \theta_{nj}\phi_j\quad\mbox{and}\quad \bar f_{n} = \bar f_{n,c_2k_n} = f_n - \tilde f_{n}
$$
Denote $\tilde F_{n}$, $\bar F_{n}$  the functions having derivatives $1+\tilde f_{n,c_2k_n}$ and $1+\bar f_{n,c_2k_n}$ respectively and such that $\tilde F_{n}(1) = 1$   and $\bar F_{n}(1) =1$.

Let $T_n$ be  chi-squared test statistics with a number of cells $m_n= [c_3 k_n]$  where $c_2 < c_3 $.
Denote $\mathbb{L}_{2,n}$ linear space generated functions ${\bf 1}_{\{x \in ((j-1)/m_n,j/m_n)\}}$, $1  \le j \le m_n$.

Denote $\bar h_n$  orthogonal projection of $\bar f_n$  onto $\mathbb{L}_{2,n}$. Denote $\tilde h_n$ orthogonal projection of  $\tilde f_n$ onto the line $\{h\,:\, h = \lambda \bar h_n,\, \lambda \in \mathbb{R}^1\}$.

Note that $n^{-1/2} T_n^{1/2}(F_n)$ equals the $\mathbb{L}_{2,n}$-norm of function $f_{n}$. Hence we have
\begin{equation}\label{ee9}
n^{-1/2}\,m_n^{-1} T_n^{1/2}(F_n) \ge  \|\bar h_n + \tilde h_n\|.
\end{equation}
Thus, by Theorem \ref{chi2}, it suffices to show that, for some choice of $c_3$, there holds $\|\bar h_n + \tilde h_n\| \asymp n^{-r}$ if $m_n > c_3\, k_n$.

Denote $\bar g_n = \bar f_n - \bar h_n$ and $\tilde g_n = \tilde f_n - \tilde h_n$.

Denote
$$
\bar p_{jn} = \frac{1}{m_n} \int_{(j-1)/m_n}^{j/m_n} \bar f_n(x) dx, \quad 1 \le j \le m_n.
$$
By Lemmas 3 and 4 in section 7 of \cite{ul}, we have
\begin{equation}\label{h5}
 \|\bar g_n\|^2 = m_n\sum_{j=1}^{m_n} \int_{(j-1)/m_n}^{j/m_n}(\bar f_n(x) - \bar p_{jn})^2 \, dx \le 2 \omega^2\Bigl(\frac{1}{m_n}, \bar f_n\Bigr).
\end{equation}
Here
$$\omega^2(h,f) = \int_0^1 (f(t+h) - f(t))^2\,dt, \quad h>0,$$
for any $f \in \mathbb{L}_2^{per}$.
If $f = \sum_{j=-\infty}^\infty \theta_j \phi_j,
$ then
\begin{equation}\label{h6}
\omega^2(s,f) =  2\sum_{j=1}^\infty |\theta_j|^2\,(2  - 2\cos (2\pi js)).
\end{equation}
 Since $1 - cos( x) \le x^2$, then, by (\ref{h5}) and (\ref{h6}), we have
\begin{equation}\label{h8}
\|\bar g_n\|  \le\, 4 \pi (c_2\,k_n/m_n )^{1/2}\, \|\bar f_n\|= \delta \|\bar f_n\|(1+ o(1)),
\end{equation}
where $\delta = 4\pi\,(c_2/c_3)^{1/2}$.

By (\ref{con2}), (\ref{h5}) and (\ref{h8}), we get that there is $c_{30}$, such that
\begin{equation}\label{h81}
\| \bar h_n \| > \frac{c_1}{2} n^{-r}
\end{equation}
for $c_3 > c_{30}$.

For any functions $g_1, g_2 \in \mathbb{L}_2(0,1)$ denote $(g_1,g_2)$ inner product of $g_1$ and $g_2$.

We have
\begin{equation}\label{ee10}
0 = (\bar f_n, \tilde f_n) = (\bar h_n, \tilde h_n) + (\bar g_n, \tilde f_n).
\end{equation}
By (\ref{h8}), we get
\begin{equation*}
|(\bar g_n, \tilde f_n)| \le \|\bar g_n\|\, \|\tilde f_n\| \le \delta C^2 n^{-2r}.
\end{equation*}
Therefore we get
\begin{equation}\label{ee11}
|(\bar h_n, \tilde h_n) | \le \delta C^2 n^{-2r}.
\end{equation}
By (\ref{h81}) è (\ref{ee11}), we get that, for sufficiently small $\delta> 0$, there holds $\|\bar h_n + \tilde h_n\| \asymp n^{-r}$. Hence, using (\ref{ee9}) and implementing Theorem  \ref{chi2}, we  get sufficiency.\end{proof}
\begin{proof}[ Proof of version of Theorem \ref{tq4}] We prove sufficiency. Let $k_n= [c_1 n^{2 - 4r}]$.
For $c_2 > 2c_1$, we have
\begin{equation}\label{h11}
 T_n^{1/2}(F_{n}) \le T_n^{1/2}(\bar F_{n}) + T_n^{1/2}(\tilde F_{n}).
\end{equation}
By Lemma \ref{lch2}, we have
\begin{equation}\label{h12}
n^{-1} m_n{-2} T_n(\tilde F_{n}) \le c_2^{-1}m_n k_n^{-1} \|\tilde f_n \|^2  \le c_2^{-1} c_1 C n^{-2r}.
\end{equation}
We have
\begin{equation}\label{h13}\|\bar f_n\| \ge n^{-1/2}m_n^{-1}T_n^{1/2}(\bar F_n).
\end{equation}
Since one can take arbitrary value $c_2$, $c_2 > 2c_1$, then, by Theorem \ref{chi2}, (\ref{con3}) and  (\ref{h11}) - (\ref{h13}) together, we get
inconsistency of sequence $f_n$. \end{proof}

\begin{proof}[ Proof of  version of Theorem \ref{tq1}] Let us prove {\it ii.}
 Suppose  opposite. Then there is sequence $i_l$, $i_l \to \infty$ as $l \to \infty$, such that
\begin{equation*}
i_l^{2s} \|\tilde f_{i_l}\|^2  = C_l,
\end{equation*}
with $C_l \to \infty$ as $l \to \infty$. Here $f = \sum_{j=-\infty}^\infty\tau_j \phi_j$ and $\tilde f_{i_l} = \sum_{|j| >i_l}\tau_j \phi_j$.

Define  sequence $n_l$  such that $n_l^{-r} \asymp \|\tilde f_{i_l}\|$ as $l \to \infty$.

Then, estimating similarly to  (\ref{u7}) and (\ref{u10}), we get $i_{l}^{-1/2} \asymp C_l^{(2r-1)/2}n_l^{2r-1}$ as $l \to \infty$.

If $m_l = o(i_l)$, then, by Lemma \ref{lch2}, we get
\begin{equation}\label{xy1}
m_{l}^{-1/2}T_{n_l}(\tilde F_{i_l}) \le m_{l}^{1/2} i_l^{-1} n_l \sum_{|j| > i_l} |\tau_j|^2 \asymp m_{l}^{1/2} i_l^{-1} n_l^{1-2r} = o( C_l^{(2r-1)/2}).
\end{equation}

Let $m_l \asymp i_l$ or $i_l = o(m_l)$. Then we have
\begin{equation}\label{udav1}
n_l^{-2r} \asymp \|\tilde f_{i_l}\|^2  \ge n_l^{-1}m_l^{-2} T_{n_l}(\tilde F_{i_l}).
\end{equation}
Therefore
\begin{equation}\label{xy2}
m_l^{-1/2} T_{n_l}(\tilde F_{i_l}) \le Cm_l^{-1/2}n_l^{1-2r} = C m_l^{-1/2}i_l^{1/2} C_l^{(2r-1)/2}  = o(1).
\end{equation}
By  Theorem \ref{chi2}, (\ref{xy1}) -(\ref{xy2}) imply {\sl ii.} \end{proof}
\begin{proof}[ Proof of version of Theorem \ref{tq11}] Let
$
f_{1n} = \sum_{|j|< ck_n} \theta_{nj} \phi_j.
$
Then, by Lemma \ref{ld3}, there is maxiset $\mathbb{\tilde B}^s_{2\infty}(P_0)$ such that $f_{1n} \in \mathbb{\tilde B}^s_{2\infty}(P_0)$.

Denote $F_{1n}$ function having derivative $1 + f_{1n}$ and such that $F_{1n}(1) = 1$.

We have
\begin{equation}\label{uxa1}
|T_n^{1/2}(F_n) - T_n^{1/2}(F_{1n})| \le T_n^{1/2}(F_n - F_{1n} + F_0).
\end{equation}
If $m_n = [c_0 k_n]$  and $c > 2c_0$, then, by Lemma \ref{lch2}, we have
\begin{equation}\label{uxa2}
n^{-1} T_n(F_n - F_{1n} + F_0)  \le c_0 c^{-1}\, \| f_n - f_{1n}\|^2.
\end{equation}
Since the choice of $c$ is arbitrary, by Theorem \ref{chi2}, (\ref{uxa1}) and (\ref{uxa2}) imply (\ref{uuu}) and (\ref{uu1}). \end{proof}

  Proof of  {\sl i.} in version of Theorem \ref{tq1} and versions of Theorems \ref{tq7}, \ref{tq6}, \ref{tq12}, \ref{tq8} follows from Theorem \ref{chi2} and versions of Theorems \ref{tq3} and \ref{tq4}  using the same reasoning as in  subsection \ref{subsec9.4}.
Proof of Theorem \ref{tchi5} is akin to proof of Theorem \ref{tq5} and is omitted.
\subsection{Proof of Theorems of section \ref{sec7} and Theorem \ref{th82} \label{subsec9.7}}
    Lemma \ref{lc1} given below  allows to carry over corresponding reasoning for Brownian bridge $b(t)$, $t \in (0,1)$, instead of empirical distribution functions.
 \begin{lemma}\label{lc1} For any $x > 0$, we have
\begin{equation}\label{lm1}
{P}_{F_n}(nT^2(\hat F_n - F_0) < x) -  {P}\,(T^2(b(t) + \sqrt{n}(F_n(t) - F_0(t))) < x)  = o(1)
 \end{equation}
 uniformly onto sequences c.d.f.'s $F_n$ such that $T(F_n - F_0)  < cn^{-1/2}$.
 \end{lemma}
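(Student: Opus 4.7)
The plan is to couple $\hat F_n$ with a Brownian bridge via the Komlós--Major--Tusnády (KMT) strong approximation and then reduce the claim to negligibility of $L^2(0,1)$ errors. First, decompose $\sqrt{n}(\hat F_n(t) - F_0(t)) = \alpha_n(t) + \Delta_n(t)$ with $\alpha_n(t) = \sqrt{n}(\hat F_n(t) - F_n(t))$ the centered empirical process under $F_n$ and $\Delta_n(t) = \sqrt{n}(F_n(t) - F_0(t))$ a deterministic drift. The hypothesis $T(F_n - F_0) < cn^{-1/2}$ gives $\|\Delta_n\|_{L^2} < c$, and the basic identity $nT^2(\hat F_n - F_0) = \|\alpha_n + \Delta_n\|_{L^2}^2$ reduces the problem to comparing $\alpha_n$ with a standard Brownian bridge $b$ in $L^2$.

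Second, putting $U_i = F_n(X_i)$ i.i.d.\ Uniform$[0,1]$ under $F_n$ with empirical c.d.f.\ $\hat G_n$, so that $\hat F_n = \hat G_n \circ F_n$, I invoke KMT to construct on a suitable probability space a Brownian bridge $B_n$ with
\begin{equation*}
\sup_{u \in [0,1]} |\sqrt{n}(\hat G_n(u) - u) - B_n(u)| = O(\log n / \sqrt{n}) \quad \text{a.s.,}
\end{equation*}
hence $\|\alpha_n - B_n \circ F_n\|_\infty = o(1)$ a.s. I then absorb the time change using the Brownian-bridge increment bound $\mathbf{E}(B_n(s) - B_n(t))^2 \le |s-t|$, which gives
\begin{equation*}
\mathbf{E}\int_0^1 (B_n(F_n(t)) - B_n(t))^2 dt \le \int_0^1 |F_n(t) - t|\, dt \le \|F_n - F_0\|_{L^2} < c n^{-1/2}.
\end{equation*}
Thus $\|B_n \circ F_n - B_n\|_{L^2} \to 0$ in probability, uniformly over the class of admissible $F_n$, and combining with the KMT step yields $\|\alpha_n - B_n\|_{L^2} \to 0$ in probability, uniformly.

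Third, I expand
\begin{equation*}
\|\alpha_n + \Delta_n\|_{L^2}^2 - \|B_n + \Delta_n\|_{L^2}^2 = \langle \alpha_n - B_n,\, \alpha_n + B_n + 2\Delta_n\rangle.
\end{equation*}
By Cauchy--Schwarz together with the tightness of $\|\alpha_n\|_{L^2}$ and $\|B_n\|_{L^2}$ and the uniform $L^2$-bound on $\Delta_n$, this difference is $o_P(1)$. Since $B_n$ has the law of the standard Brownian bridge $b$, $\|B_n + \Delta_n\|_{L^2}^2$ has the same law as $T^2(b + \sqrt{n}(F_n - F_0))$; a Slutsky-type argument combined with continuity of the c.d.f.\ of a Gaussian quadratic form at any $x > 0$ then yields (\ref{lm1}) pointwise in $x$, uniformly in $F_n$.

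The main obstacle is the time change $B_n \circ F_n \mapsto B_n$ under the mild assumption $\|F_n - F_0\|_{L^2} < cn^{-1/2}$, which does not imply uniform closeness; the key trick is to convert $L^1$-closeness of $F_n$ to $F_0$ into an expected $L^2$-error via the Brownian-bridge variance identity. Uniformity is then automatic, since every estimate (KMT remainder, the bridge variance integral, Cauchy--Schwarz) depends on $F_n$ only through $\|F_n - F_0\|_{L^2}$ or absolute constants.
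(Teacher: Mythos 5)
Your route is essentially the paper's: the paper also invokes the Hungarian (KMT) construction to replace the centered empirical process by a Brownian bridge composed with $F_n$, and its Lemma \ref{lc3} is exactly your time-change step, bounding $\mathbf{E}\,[|T^2(b(F_n(t)))-T^2(b(t))|]$ through the bridge covariance and $T(F_n-F_0)\le c n^{-1/2}$. Up to and including the expansion $\|\alpha_n+\Delta_n\|^2-\|B_n+\Delta_n\|^2=\langle \alpha_n-B_n,\ \alpha_n+B_n+2\Delta_n\rangle$ and the Cauchy--Schwarz bound, your argument is sound and the estimates are indeed uniform in $F_n$ through $\|F_n-F_0\|_{L^2}$ and absolute constants.

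The genuine gap is in the last step. To pass from ``the two quadratic forms differ by $o_P(1)$, uniformly'' to ``their distribution functions at a fixed $x$ differ by $o(1)$, uniformly,'' continuity of the c.d.f.\ of each individual Gaussian quadratic form is not enough: the law of $T^2(b+\sqrt{n}(F_n-F_0))$ changes with $n$ and with the admissible sequence $F_n$, so the Slutsky-type transfer requires a bound of the form $\mathbf{P}\bigl(x\le T^2(b+\Delta_n)\le x+\varepsilon\bigr)\le M\varepsilon$ with $M$ not depending on $n$ or on the drift, i.e.\ equicontinuity (uniform anti-concentration) of the whole family of target c.d.f.'s, not pointwise continuity of each member. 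Your closing claim that ``uniformity is then automatic'' silently assumes exactly this. The paper devotes a separate statement, Lemma \ref{lc4}, to it: writing $T^2\bigl(b+\sqrt{n}(F_n-F_0)\bigr)=\sum_{j}\bigl(\xi_j/(\pi j)+\sqrt{n}\,\theta_{nj}\bigr)^2$ via the Karhunen--Lo\`eve expansion and showing that the density of $(\xi_1+a)^2+\tfrac14(\xi_2+b)^2$ is bounded uniformly over $|a|\le C$, $|b|\le C$, hence the densities of the limiting laws are uniformly bounded over $\{nT^2(F_n-F_0)\le C\}$. This is a fixable omission --- the needed lemma is true and its proof is elementary --- but as written your argument does not establish the uniformity asserted in (\ref{lm1}) without it.
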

 If $\sqrt{n}(F_n - F_0) \to G$  in Kolmogorov - Smirnov distance, (\ref{lm1}) has been proved Chibisov \cite{chib} without any statements of uniform convergence.

 Lemma \ref{lc1} follows from Lemmas \ref{lc2} and \ref{lc4} given below after implementation of Hungary construction (see Th. 3, Ch. 12,  section 1, \cite{wel}).

 \begin{lemma}\label{lc2} For any $x > 0$, we have
 \begin{equation}\label{lm2}
 \begin{split}&
 \mathbf{ P}\,(T^2(b(F_n(t)) + \sqrt{n}(F_n(t) - F_0(t))) < x) \\& -  {\mathbf P}\,(T^2(b(t) + \sqrt{n}(F_n(t) - F_0(t))) < x)  = o(1)
 \end{split}
 \end{equation}
 uniformly onto sequences of c.d.f.'s $F_n$ such that $T(F_n - F_0)  < cn^{-1/2}$.
 \end{lemma}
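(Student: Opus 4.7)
Since $F_0(t)=t$, the functional $T^2$ is just the squared $L_2(0,1)$ norm. Writing $g_n(t)=\sqrt{n}(F_n(t)-F_0(t))$, set $A_n=\|b\circ F_n+g_n\|^2$ and $B_n=\|b+g_n\|^2$. By the triangle inequality,
$$|A_n^{1/2}-B_n^{1/2}|\le \|b\circ F_n-b\|,$$
a bound that no longer involves $g_n$. The plan therefore has two parts: (a) prove $\|b\circ F_n-b\|\to 0$ in probability uniformly over c.d.f.'s $F_n$ with $T(F_n-F_0)<cn^{-1/2}$; (b) prove that the distribution function $x\mapsto \mathbf{P}(B_n<x)$ is Lipschitz with a constant independent of $n$, so that probabilistic closeness of $A_n$ and $B_n$ translates into uniform closeness of the distribution functions.

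For (a) I would use the almost sure H\"older continuity of the Brownian bridge: for every $\gamma<1/2$ there is an a.s.\ finite $K(\omega)$ with $|b(s)-b(t)|\le K|s-t|^\gamma$. Taking $\gamma=1/4$ and applying Jensen,
$$\int_0^1(b(F_n(t))-b(t))^2\,dt\le K^2\int_0^1|F_n(t)-t|^{1/2}\,dt\le K^2\Bigl(\int_0^1(F_n(t)-t)^2\,dt\Bigr)^{1/4}\le K^2c^{1/2}n^{-1/4}.$$
Because the bound depends on $F_n$ only through the uniform constant $c$, convergence is uniform in $F_n$.

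For (b) I would expand the Brownian bridge in its Karhunen--Lo\`eve basis $b(t)=\sum_j\sqrt{\lambda_j}Z_j e_j(t)$ with $\lambda_j=(\pi j)^{-2}$, $e_j(t)=\sqrt{2}\sin(\pi j t)$ and i.i.d.\ standard normals $Z_j$. Then $B_n=\sum_j(\sqrt{\lambda_j}Z_j+\mu_{nj})^2$ with $\mu_{nj}=\langle g_n,e_j\rangle$ and $\sum_j\mu_{nj}^2=\|g_n\|^2\le c^2$. Conditioning on $(Z_j)_{j\ge 2}$ writes $B_n$ as a (random) constant plus $\lambda_1(Z_1+\mu_{n1}/\sqrt{\lambda_1})^2$, whose density is bounded by a constant depending only on $\lambda_1$ (this is the density of a scaled, translated $\chi_1^2$, which is uniformly bounded in the translation parameter). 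Taking the outer expectation preserves the bound, so the density of $B_n$ is bounded uniformly in $n$ and in $g_n$ with $\|g_n\|\le c$.

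With (a) and (b) in place, a standard sandwich closes the argument: for $\eta>0$,
$$\mathbf{P}(A_n<x)\le \mathbf{P}(B_n<(\sqrt{x}+\eta)^2)+\mathbf{P}(|A_n^{1/2}-B_n^{1/2}|>\eta),$$
the second term vanishing by (a) and the first differing from $\mathbf{P}(B_n<x)$ by at most $C\eta\sqrt{x}$ by (b); the reverse inequality is symmetric, and letting $\eta\to 0$ after $n\to\infty$ yields \eqref{lm2}. The main obstacle is (b): one needs the density bound to hold \emph{uniformly} in the deterministic drift $g_n$, and in particular not to deteriorate when $g_n$ concentrates on a single low-frequency Fourier mode; isolating the lowest eigenvalue $\lambda_1$ and using the explicit density of a noncentral $\chi_1^2$ is the simplest route.
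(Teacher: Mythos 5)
Your overall architecture is the same as the paper's (closeness of the two quadratic statistics, an anti-concentration bound for the comparison statistic $B_n=T^2(b+\sqrt{n}(F_n-F_0))$, then a sandwich), and your part (a) is a valid alternative to the paper's argument: the paper bounds $\mathbf{E}\,T^2(b\circ F_n-b)$ directly from the covariance of the Brownian bridge, getting $\int_0^1|F_n-F_0|\,dt\le T(F_n-F_0)$, whereas you use pathwise H\"older continuity plus H\"older's inequality; both give a bound depending on $F_n$ only through $c\,n^{-1/2}$, hence uniformity.

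The genuine gap is in part (b). Conditioning on $(Z_j)_{j\ge 2}$ and isolating only the lowest mode does not give a bounded conditional density: the density of $\lambda_1(Z_1+a)^2$ at $x>0$ is
\begin{equation*}
\frac{1}{2\sqrt{\lambda_1 x}}\Bigl(\phi\bigl(\sqrt{x/\lambda_1}-a\bigr)+\phi\bigl(\sqrt{x/\lambda_1}+a\bigr)\Bigr),
\end{equation*}
which blows up like $x^{-1/2}$ as $x\to 0^{+}$ for every value of the translation parameter; so your claim that the scaled, translated $\chi^2_1$ has a density ``uniformly bounded in the translation parameter'' is false, and the conditional density of $B_n$ is unbounded just above the (random) constant $\sum_{j\ge2}(\sqrt{\lambda_j}Z_j+\mu_{nj})^2$. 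Taking the outer expectation does not obviously repair this, since you would need a quantitative smoothness statement about that random constant, which is exactly what you are trying to prove. The fix is the one the paper uses in its Lemma on bounded densities: condition on $(Z_j)_{j\ge 3}$ and keep the \emph{two} lowest modes, i.e.\ work with $\lambda_1(Z_1+a_1)^2+\lambda_2(Z_2+a_2)^2$. Each summand has density bounded by $C x^{-1/2}$ (uniformly over bounded noncentralities $|a_i|\le c\pi^{i}$, which holds here since $\sum_j\mu_{nj}^2\le c^2$), and the convolution of two such densities is uniformly bounded because $\int_0^x s^{-1/2}(x-s)^{-1/2}\,ds=\pi$; adding the independent nonnegative remainder and integrating preserves the bound. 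With (b) repaired this way, your sandwich argument goes through and the lemma follows.
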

 Lemma \ref{lc2} follows from Lemmas \ref{lc3} and \ref{lc4} given below.
 \begin{lemma}\label{lc3} There holds
 \begin{equation}\label{lm100}
 {\mathbf E}\,[|T^2(b(F_n(t)))- T^2(b(t))|]  < c T^{1/4}(F_n - F_0).
 \end{equation}
\end{lemma}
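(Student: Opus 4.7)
The plan is to estimate $T^2(b(F_n))-T^2(b)=\int_0^1 [b^2(F_n(x))-b^2(x)]\,dx$ by passing the expectation inside the $dx$-integral (with absolute values) via Fubini, and then bounding the integrand pointwise using the covariance structure of the Brownian bridge.

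First I would factor
$$b^2(F_n(x))-b^2(x)=\bigl(b(F_n(x))-b(x)\bigr)\bigl(b(F_n(x))+b(x)\bigr)$$
and apply Cauchy--Schwarz in expectation at each fixed $x$:
$$\mathbf{E}\bigl|b^2(F_n(x))-b^2(x)\bigr|\le \bigl(\mathbf{E}(b(F_n(x))-b(x))^2\bigr)^{1/2}\bigl(\mathbf{E}(b(F_n(x))+b(x))^2\bigr)^{1/2}.$$
Using $\mathbf{Cov}(b(s),b(t))=\min(s,t)-st$, direct calculation gives $\mathbf{E}(b(s)-b(t))^2=|s-t|-(s-t)^2\le |s-t|$, while $\mathbf{E}(b(s)+b(t))^2\le 2(\mathbf{E}b^2(s)+\mathbf{E}b^2(t))\le 1$. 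Integrability of the pointwise bound in $x$ is clear (since $\mathbf{E}b^2(t)\le 1/4$), so Fubini yields
$$\mathbf{E}\bigl[|T^2(b(F_n))-T^2(b)|\bigr]\le \int_0^1 |F_n(x)-x|^{1/2}\,dx.$$

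Next I would apply H\"older's inequality with conjugate exponents $(4,4/3)$ to the pair $(|F_n(x)-x|^{1/2},1)$:
$$\int_0^1 |F_n(x)-x|^{1/2}\,dx\le \left(\int_0^1 |F_n(x)-x|^2\,dx\right)^{1/4}= T^{1/2}(F_n-F_0).$$
Since the lemma is applied in the regime $T(F_n-F_0)<c\,n^{-1/2}<1$, one has $T^{1/2}(F_n-F_0)\le T^{1/4}(F_n-F_0)$, and absorbing constants yields the stated bound $c\,T^{1/4}(F_n-F_0)$.

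There is no real obstacle: the only points needing care are verifying Fubini (handled by the uniform bound $\mathbf{E}b^2(t)\le 1/4$) and recognizing that the H\"older step with exponent $4$ is the one that matches the $L^2$-norm appearing in $T$. In fact this approach delivers a strictly stronger bound ($T^{1/2}$) than the one claimed, so the weaker stated form $T^{1/4}$ follows with room to spare; presumably the authors chose $T^{1/4}$ because it is all that is needed in the subsequent implementation of the Hungarian construction in the proof of Lemma \ref{lc1}.
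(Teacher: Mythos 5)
Your proof is correct, and it reaches the stated bound (indeed the stronger $T^{1/2}$ bound) by a route that is parallel to, but organized differently from, the paper's. The paper keeps the path-level norms as scalar random variables: it factors $T^2(b\circ F_n)-T^2(b)=(T(b\circ F_n)-T(b))(T(b\circ F_n)+T(b))$, applies Cauchy--Schwarz to the product of these two random variables, bounds $\mathbf{E}[(T(b\circ F_n)+T(b))^2]$ by a constant, uses the reverse triangle inequality $|T(b\circ F_n)-T(b)|\le T(b\circ F_n-b)$, and then computes $\mathbf{E}[T^2(b\circ F_n-b)]=\int_0^1\bigl(|F_n(t)-F_0(t)|-(F_n(t)-F_0(t))^2\bigr)\,dt\le\int_0^1|F_n-F_0|\,dt\le T(F_n-F_0)$. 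You instead push the expectation inside the $dt$-integral by Tonelli and apply the difference-of-squares factorization and Cauchy--Schwarz pointwise in $t$, which dispenses with the triangle-inequality step and with the moment bound on $T(b\circ F_n)+T(b)$, at the cost of the extra H\"older step $\int_0^1|F_n-F_0|^{1/2}\,dt\le T^{1/2}(F_n-F_0)$; both computations rest on the same covariance identity $\mathbf{E}(b(s)-b(t))^2=|s-t|-(s-t)^2$. Two small remarks: your claim that your bound is \emph{strictly} stronger than the paper's is not quite accurate --- the paper's own chain also yields $\mathbf{E}[|T^2(b\circ F_n)-T^2(b)|]\le C\,T^{1/2}(F_n-F_0)$ (it bounds the \emph{squared} expectation by $C\,T$, then weakens to the stated $T^{1/4}$ form); and the passage from $T^{1/2}$ to $T^{1/4}$ needs no appeal to the regime $T<c n^{-1/2}$, since $|F_n-F_0|\le 1$ gives $T(F_n-F_0)\le 1$ unconditionally, so $T^{1/2}\le T^{1/4}$ always.
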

\begin{proof} We have
\begin{equation}\label{lm3}
\begin{split}&
 \mathbf{E}^2\, [\,|T^2(b(F_n(t))- T^2(b(t))|] \le \mathbf{E}^2\,[|(T(b(F_n(t))- T(b(t)))\,(T(b(F_n(t)) + T(b(t)))|]\\&
\le  \mathbf{E}\,[((T(b(F_n(t)))- T(b(t)))^2]\,\mathbf{E}\, [(T(b(F_n(t))) + T(b(t)))^2]\\& \le
C\,  \mathbf{E}\, [((T(b(F_n(t))- T(b(t)))^2]  \le C \mathbf{E}\, [T^2(b(F_n(t)) -b(t)))] \\& =
C \,\int_0^1 (F_n(t) - F^2_n(t) - 2\min (F_n(t), F_0(t)) + 2F_n(t)F_0(t) + F_0(t) - F_0^2(t) \, dt\\& =C \int_0^1 F_n(t) + F_0(t) -2 \min(F_n(t), F_0(t)) - (F_n(t) - F_0(t))^2 \,dt\\&
= C \,\int_0^1 |F_n(t) - F_0(t)| - (F_n(t) - F_0(t))^2 \,dt\\& \le
C\, \int_0^1\, |F_n(t) - F_0(t)| \, dt\, \le\, T^{1/2}(F_n - F_0).
\end{split}
\end{equation} \end{proof}
\begin{lemma}\label{lc4}
Densities of c.d.f.'s $\mathbf{P}\,(T^2(b(t) + n^{1/2}(F_n(t) - F_0(t))) \le x)$ are uniformly bounded onto the set of all c.d.f. $F_n$ such that $nT^2(F_n -F_0) <C$. Here $C$ is arbitrary.
\end{lemma}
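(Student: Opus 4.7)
The plan is to diagonalize the Brownian bridge via its Karhunen--Lo\`eve expansion, rewrite $T^2(b + g_n)$ with $g_n(t) := n^{1/2}(F_n(t) - F_0(t))$ as an infinite sum of squares of independent shifted Gaussians, and then bound the characteristic function of the resulting random variable uniformly in $F_n$.

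First I would expand both $b$ and $g_n$ in the orthonormal sine basis $\psi_k(t) = \sqrt{2}\sin(\pi k t)$, $k \ge 1$, which is complete in $\mathbb{L}_2(0,1)$ and respects the boundary conditions $g_n(0) = g_n(1) = 0$. Writing $g_n = \sum_{k \ge 1} a_{nk}\psi_k$ (so $\sum_k a_{nk}^2 = nT^2(F_n - F_0) < C$) and using the Karhunen--Lo\`eve expansion $b(t) = \sum_{k \ge 1}(\pi k)^{-1}\xi_k \psi_k(t)$ with i.i.d.\ $\xi_k \sim N(0,1)$, we get
\[
W_n := T^2(b + g_n) = \sum_{k \ge 1}\Bigl(\frac{\xi_k}{\pi k} + a_{nk}\Bigr)^2.
\]

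Next I would compute the characteristic function of $W_n$. Each summand $Y_{nk}^2$ with $Y_{nk} \sim N(a_{nk}, (\pi k)^{-2})$ has characteristic function
\[
\mathbf{E}\bigl[e^{it Y_{nk}^2}\bigr] = \Bigl(1 - \frac{2it}{\pi^2 k^2}\Bigr)^{-1/2}\exp\!\left(\frac{it\,a_{nk}^2}{1 - 2it/(\pi^2 k^2)}\right).
\]
The real part of the exponent equals $-2t^2 a_{nk}^2\pi^{-2}k^{-2}/(1 + 4t^2\pi^{-4}k^{-4}) \le 0$, so the exponential factor has modulus at most $1$. Independence across $k$ then yields the pointwise bound
\[
\bigl|\mathbf{E}[e^{itW_n}]\bigr| \le G(t) := \prod_{k \ge 1}\Bigl(1 + \frac{4t^2}{\pi^4 k^4}\Bigr)^{-1/4},
\]
which does \emph{not} depend on the coefficients $a_{nk}$, and hence not on $F_n$.

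Finally I would show $G \in L^1(\mathbb{R})$, after which the Fourier inversion formula yields
\[
p_{W_n}(x) \le \frac{1}{2\pi}\int_{-\infty}^{\infty} G(t)\,dt < \infty,
\]
a bound independent of $F_n$, proving uniform boundedness. The main obstacle is the decay estimate for $G$ at infinity. Splitting the series $\sum_k \log(1 + 4t^2\pi^{-4}k^{-4})$ at the crossover index $k \asymp |t|^{1/2}$---treating small-$k$ terms by $\log(1+u) \sim \log u$ for $u$ large, and large-$k$ terms by $\log(1+u) \sim u$ for $u$ small---one finds that both contributions are of order $|t|^{1/2}$, so $-\log G(t) \asymp |t|^{1/2}$ as $|t| \to \infty$ and hence $G(t) \lesssim e^{-c|t|^{1/2}}$ is integrable.
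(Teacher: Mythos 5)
Your proposal is correct, and it shares the paper's first step but then diverges in the key estimate. Like the paper, you diagonalize via the Karhunen--Lo\`eve expansion $b(t)=\sum_k (\pi k)^{-1}\xi_k\psi_k(t)$ in the sine basis and use Parseval to write $T^2(b+g_n)=\sum_k\bigl(\xi_k/(\pi k)+a_{nk}\bigr)^2$ as a sum of independent shifted squared Gaussians; this is exactly the paper's identity (\ref{lm4}). From there the paper argues more elementarily: since the summands are independent, the density of the full sum is bounded by the sup of the density of the first two terms $(\xi_1+n^{1/2}\theta_{n1})^2+(\xi_2/2+n^{1/2}\theta_{n2})^2$ (convolution with an independent remainder cannot increase the sup of a density), and it then invokes the explicit form of the densities of $(\xi+a)^2$, using the hypothesis $nT^2(F_n-F_0)<C$ only to keep the shifts bounded; the final uniformity check is declared routine and omitted. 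You instead bound the modulus of the characteristic function of the whole series by $G(t)=\prod_k(1+4t^2\pi^{-4}k^{-4})^{-1/4}$ --- your computation of the noncentral factor and the sign of its real part is right, so the bound is shift-free --- show $-\log G(t)\asymp |t|^{1/2}$ by splitting at $k\asymp|t|^{1/2}$ (for integrability it already suffices that the $\lfloor |t|^{1/2}\rfloor$ terms with $k\le|t|^{1/2}$ each contribute at least $\tfrac14\log(1+4\pi^{-4})$), and conclude by Fourier inversion. Your route buys a cleaner, fully explicit uniformity argument and in fact a stronger statement: the density bound is uniform over \emph{all} $F_n$, with the hypothesis $nT^2(F_n-F_0)<C$ never used; note also that the characteristic-function decay $|t|^{-N/4}$ from only finitely many factors would not suffice for $N=2$, so your use of the full product (or at least three factors) is what makes inversion work, whereas the paper's two-term truncation is viable only because it works with explicit densities rather than characteristic functions. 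The paper's argument, in exchange, avoids any discussion of infinite products and integrability of $G$.
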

\begin{proof} Brownian bridge $b(t)$  admits representation
\begin{equation*}
b(t) = \sum_{j=1}^\infty \frac{\xi_j}{\pi j} \psi_j(t)
\end{equation*}
where $\psi_j(t) = \sqrt{2}\,\sin(\pi j t)$ and $\xi_j$, $1 \le j < \infty$, are i.i.d. Gaussian random variables, $\mathbf{E}\, \xi_j = 0$ and $ \mathbf{E}\, \xi_j^2 = 1$.

Therefore, if $F_n(t) = \sum_{j=1}^\infty \theta_{nj} \psi_j$, then
\begin{equation}\label{lm4}
T^2(b(t) + n^{1/2}(F_n(t) - F_0(t))) = \sum_{j=1}^\infty \Bigl(\frac{\xi_j}{\pi j} + n^{1/2} \theta_{nj}\Bigr)^2.
\end{equation}
The right hand-side of (\ref{lm4}) is a sum of independent random variables. Thus it suffices to show that, for any $C$, random variables
\begin{equation*}
(\xi_1 + n^{1/2} \theta_{n1})^2 + (\xi_2/2 + n^{1/2} \theta_{n2})^2
\end{equation*}
have uniformly bounded densities  onto $n^{1/2}|\theta_{n1}| \le C$ and
 $n^{1/2}|\theta_{n2}| \le C$.

 Densities $(\xi_1 + a)^2$ and $(\xi_2 + b)^2$  have wellknown analytical form, and proof of uniform boundedness of densities of $(\xi_1 + a)^2 + \frac{1}{4}(\xi_2 + b)^2$ with $|a| \le C$ and
 $|b| \le C$ is obtained by routine technique. We omit these standard estimates. \end{proof}

For proof of  Theorem \ref{tcm} it suffices to prove {\it ii.} Hungary construction allows to reduce reasoning  to proof of corresponding statement for Brownian bridge $b(t)$, $t \in [0,1]$. Thus Theorem \ref{tcm} follows from Theorem \ref{th82}.

\begin{proof}[ Proof of Theorem  \ref{th82}] Denote $\zetab = \{\zeta_j\}_1^\infty$, $\zeta_j = \sigma_j \xi_j$.

Suppose opposite that (\ref{hy85}) does not valid. Then there is subsequence of vectors $\etab_{n} =\{\eta_{nj}\}_1^\infty \in \Upsilon(a)$ such that we have
\begin{equation} \label{lhm5}
\lim_{n \to \infty}\mathbf{P}\,(T(\etab_n + \zetab) \le x_\alpha) \ge  1 - \alpha.
\end{equation}
Denote $\theta_{nj} = \kappa_j \eta_{nj}$, $1 \le j < \infty$.

There are $\thetab = \{\theta_j\}_1^\infty$ and subsequence $n_i \to  \infty$ such that
$  \theta_{n_ij} \to \theta_j$ as $i \to \infty$ for each $j$, $1 \le j < \infty$.

Therefore there are sequences  $C_k \to \infty$ and  $i_k \to \infty$ as $k \to \infty$, such that
\begin{equation} \label{lhm6}
\lim_{k \to \infty}\frac{ \sum_{j < C_k} \theta_{n_{i_k}j}^2 }{ \sum_{j < C_k} \theta_j^2 } = 1
\end{equation}
and
\begin{equation} \label{lhm206}
\lim_{k \to \infty}\, \sum_{j < C_k} (\theta_{n_{i_k}j}  -\theta_j)^2 = 0
\end{equation}

We consider two cases.
\vskip 0.15cm
{\it i.} There holds
\begin{equation*}
\lim_{k \to \infty} \, \sum_{j > C_k} \theta_{n_{i_k}j}^2  = 0.
\end{equation*}
\vskip 0.15cm
{\it ii.} There holds
\begin{equation*}
 \sum_{j > C_k} \,\theta_{n_{i_k}j}^2  > c \quad \mbox{for all} \quad k > k_0.
 \end{equation*}
 If {\it i.} holds, we have
 \begin{equation}\label{uxx1}
 \mathbf{E}\,\Bigl(\sum_{j > C_k} \kappa_j\zeta_j\,\theta_{n_{i_k}j}\, \Bigr)^2 =  \sum_{j > C_k} \sigma_j^2\, \theta_{n_{i_k}j}^2\, = o(1).
 \end{equation}
 By (\ref{lhm206}), we get
 \begin{equation}\label{uxx2}
 \mathbf{E} \,\left(\sum_{j < C_k} \kappa_j\zeta_j\,(\theta_{n_{i_k}j} -\theta_j)\right)^2 = \sum_{j < C_k} \kappa_j^2\,\sigma_j^2(\theta_{n_{i_k}j}  -\eta_j)^2 = o(1).
 \end{equation}
By (\ref{uxx1}) and (\ref{uxx2}), we get
 \begin{equation*}
 \begin{split}&
\mathbf{P}\Bigl(\,\sum_{j=1}^\infty \Bigl(\kappa_j\,\zeta_j +  \theta_{n_{i_k}j}\Bigr)^2 \,  < x_\alpha\,\Bigr)\\& = \mathbf{P}\Bigl(\,\sum_{j < C_k} \Bigl
(\kappa_j \,\zeta_j +  \theta_{n_{i_k}j}\Bigr)^2 \,  +
\sum_{j > C_k} \kappa_j^2\zeta_j^2\ < x_\alpha\,(1 + o_P(1))\,\Bigr)\\& = \mathbf{P}\Bigl(\,\sum_{j < C_k}(\kappa_j \zeta_j + \theta_j)^2  + \,\sum_{j > C_k} \kappa_j^2\zeta_j^2\,  < x_\alpha\,(1 + o_P(1))\,\Bigr)\\& <
\,\mathbf{P}\Bigl(\,\sum_{j=1}^\infty \kappa_j^2\zeta_j^2 < x_\alpha\,\Bigr)\,(1 + o(1)).
\end{split}
 \end{equation*}
where the last inequality follows from Lemma \ref{lum2} given below.
\begin{lemma}\label{lum2} Let  $\thetab = \{\theta_j\}_1^\infty$ be such that $\sum_{j=1}^\infty \theta_j^2  > c$. Then there holds
 \begin{equation}\label{lhm2}
 \mathbf{P}\,\Bigl(\,\sum_{j=1}^\infty \kappa_j^2\zeta_j^2\, < x_\alpha\Bigr) > \mathbf{P}\,\Bigl(\,\sum_{j=1}^\infty(\kappa_j \zeta_j + \theta_j)^2  < x_\alpha\Bigr).
 \end{equation}
 \end{lemma}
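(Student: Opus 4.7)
The claim is Anderson's inequality (strict form) for a centered Gaussian measure on $\ell^2$. Set $Y_j = \kappa_j \zeta_j = \kappa_j \sigma_j \xi_j$, so the $Y_j$ are independent centered Gaussians with variances $v_j = \kappa_j^2 \sigma_j^2 \le C^2 \kappa_j^2$. By D1 and D2, $\sum_j v_j < \infty$, so $\mathbf{Y} = (Y_j)_{j \ge 1}$ lies in $\ell^2$ almost surely with a centered symmetric Gaussian law $\mu$, and $\sum_j Y_j^2 < \infty$ a.s.\ with a continuous distribution. The lemma then rewrites as $\mu(K) > \mu(K - \thetab)$, where $K = \{y \in \ell^2 : \|y\|^2 < x_\alpha\}$ is an open, bounded, symmetric convex body, and $\mu(K) = 1 - \alpha \in (0,1)$ by the definition of $x_\alpha$.

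If $\thetab \notin \ell^2$, the elementary inequality $(a+b)^2 \ge \tfrac12 b^2 - a^2$ gives $\sum_j (Y_j+\theta_j)^2 \ge \tfrac12 \sum_j \theta_j^2 - \sum_j Y_j^2 = \infty$ a.s., so $\mu(K-\thetab)=0<\mu(K)$ and the claim is trivial. Assume $\thetab \in \ell^2 \setminus \{0\}$. Pick $N_0$ so that $(\theta_1,\ldots,\theta_{N_0}) \ne 0$ and $\sum_{j>N_0}\theta_j^2$ is small. Split the sums into a head ($j \le N_0$) and a tail ($j > N_0$), use the independence of the two blocks and condition on the tails $R^+ = \sum_{j>N_0}(Y_j+\theta_j)^2$ and $R^- = \sum_{j>N_0}Y_j^2$:
\[
\mu(K-\thetab) = \mathbf{E}\bigl[\mathbf{1}_{R^+<x_\alpha}\,F_+(x_\alpha - R^+)\bigr],\qquad \mu(K) = \mathbf{E}\bigl[\mathbf{1}_{R^-<x_\alpha}\,F_0(x_\alpha - R^-)\bigr],
\]
with $F_+(s) = \mathbf{P}(\sum_{j\le N_0}(Y_j+\theta_j)^2 < s)$ and $F_0(s) = \mathbf{P}(\sum_{j\le N_0}Y_j^2 < s)$. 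The classical strict Anderson inequality in $\mathbb{R}^{N_0}$ applied to the non-degenerate centered Gaussian density of $(Y_1,\ldots,Y_{N_0})$ translated by $(\theta_1,\ldots,\theta_{N_0}) \ne 0$ against the symmetric convex ball $\{y:\sum y_j^2 < s\}$ gives $F_+(s) < F_0(s)$ for every $s>0$. Combining this with the coordinatewise stochastic ordering $R^+ \succeq R^-$ and the fact that $s \mapsto \mathbf{1}_{s<x_\alpha}F_0(x_\alpha - s)$ is non-increasing, I obtain $\mu(K-\thetab) < \mu(K)$.

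\textbf{Main obstacle.} The delicate step is preserving the \emph{strict} finite-dimensional inequality after averaging over the tails: one must argue that $R^+$ puts positive mass on an interval where $F_+-F_0$ is strictly negative. This is handled by choosing $N_0$ so that the bulk of $\thetab$ sits in the head, using absolute continuity of $R^+$ (guaranteed by $\sum v_j < \infty$ and at least one $v_j > 0$). A slicker alternative is the Poisson-mixture representation $(\xi+m)^2 \stackrel{d}{=} \chi^2_{1+2J}$ with $J \sim \mathrm{Poisson}(m^2/2)$: for $m \ne 0$ this yields strict stochastic dominance $(\xi+m)^2 \succ \xi^2$ directly from $\chi^2_{1+2k} \succ \chi^2_1$ for $k \ge 1$, which can then be lifted to the infinite convolution by an independent coupling across coordinates and a dominated-convergence passage to the $\ell^2$ limit, with absolute continuity of the limit law of $\sum_j(Y_j+\theta_j)^2$ at $x_\alpha$ preserving strictness.
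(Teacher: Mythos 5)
Your argument is correct, and it reaches the inequality by a decomposition that is the mirror image of the paper's. The paper singles out \emph{one} coordinate with $\theta_1\neq 0$: conditioning on $\zeta_1$, it applies the (non-strict) Anderson theorem to the whole shifted remainder $\sum_{j\ge 2}(\kappa_j\zeta_j+\theta_j)^2$, and then extracts strictness from an elementary one-dimensional comparison of the distribution functions of $(\kappa_1\zeta_1+\theta_1)^2$ and $\kappa_1^2\zeta_1^2$ convolved with the independent nonnegative rest. You instead place the strictness in a \emph{finite head}: conditioning on the tail $R^{+}=\sum_{j>N_0}(Y_j+\theta_j)^2$, you invoke the strict finite-dimensional Anderson inequality for the head block, and you dispose of the infinite-dimensional part by the coordinatewise stochastic domination $R^{+}\succeq R^{-}$ together with monotonicity of $s\mapsto \mathbf{1}_{\{s<x_\alpha\}}F_0(x_\alpha-s)$. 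The trade-off: the paper only needs the non-strict Anderson inequality but must apply it in infinite dimensions (conditionally), with its elementary strictness step and the implicit requirement $\mathbf{P}\bigl(\sum_{j\ge2}\kappa_j^2\zeta_j^2<x_\alpha-\delta\bigr)>0$ left unremarked; your route needs the strict form of Anderson, but only in $\mathbb{R}^{N_0}$ where it is classical (equivalently, strict monotonicity of a weighted noncentral chi-square in its noncentrality), and replaces the infinite-dimensional Anderson step by an elementary coupling argument. You also correctly identify and resolve the one genuinely delicate point that both proofs share, namely that the conditioning variable must put positive mass on $\{R^{+}<x_\alpha\}$ so that averaging does not wash out the strict inequality (choosing $N_0$ with $\sum_{j>N_0}(\theta_j^2+v_j)<x_\alpha$ suffices, by Markov's inequality; absolute continuity of $R^{+}$ is not really needed), and you treat the case $\thetab\notin\ell^2$ explicitly, which the paper passes over. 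One small caveat: the strict Anderson step requires the head Gaussian to be non-degenerate, i.e.\ $\kappa_j\sigma_j>0$ for $j\le N_0$; this is the intended regime (and is implicitly assumed in the paper's proof as well, since it divides by $\kappa_1\sigma_1$), but under the literal wording of D1 a vanishing $\kappa_j$ in the head would force the minor modification of restricting the strict Anderson argument to the positive-variance coordinates.
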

 \begin{proof} For simplicity of notation the reasoning will be provided for $\theta_1 \ne 0$. Implementing Anderson Theorem \cite{an}, we get
\begin{equation}\label{lhm4}
\begin{split}&
\mathbf{P}\,\Bigl(\,\sum_{j=1}^\infty \Bigl(\kappa_j\zeta_j + \theta_j\Bigr)^2 < x_\alpha \Bigr)\\& =
(2\pi)^{-1/2} \int_{-\kappa_1^{-1}\sigma_1^{-1}\sqrt{x_\alpha}- \eta_1}^{\kappa_1^{-1}\sigma_1^{-1}\sqrt{x_\alpha}- \eta_1} \exp \Bigl\{-\frac{x^2}{2}\Bigr\} \mathbf{P}\Bigl(\,\sum_{j=2}^\infty \Bigl(\kappa_j\zeta_j + \theta_j\Bigr)^2 < x_\alpha - \,(\kappa_1\sigma_1x + \theta_1)^2\Bigr)\, d\,x\\& \le
(2\pi)^{-1/2} \int_{-\kappa_1^{-1}\sigma_1^{-1}\sqrt{x_\alpha}- \eta_1}^{\kappa_1^{-1}\sigma_1^{-1}\sqrt{x_\alpha}- \eta_1} \exp\Bigl\{-\frac{x^2}{2}\Bigr\} \mathbf{P}\Bigl(\,\sum_{j=2}^\infty \kappa_j^2\zeta_j^2 < x_\alpha - \,(\kappa_1\sigma_1x + \theta_1)^2\Bigr)\, d\,x\\& =
\mathbf{P}\,\Bigl(\, (\kappa_1\zeta_1 + \theta_1)^2 + \,\sum_{j=2}^\infty \kappa_j^2 \zeta_j^2 \ < x_\alpha\, \Bigr) <  \mathbf{P}\,\Bigl(\,\sum_{j=1}^\infty \kappa_j^2\zeta_j^2\,  < x_\alpha\Bigr).
\end{split}
\end{equation}
For the proof of last inequality in (\ref{lhm4}) it suffices to note that $\mathbf{P}( \kappa_1\zeta_1^2 < x) > \mathbf{P}( (\kappa_1\zeta_1 + \theta_1)^2 < x)$ for $x \in (0, x_\alpha)$, and, for any $\delta$, $0 < \delta < x_\alpha$, there is $\delta_1 > 0$ such that the function $\mathbf{P}( \kappa_1\zeta_1^2 < x) - \mathbf{P}( (\kappa_1 \zeta_1 + \theta_1)^2 < x)- \delta_1$ is positive onto interval $(\delta, x_\alpha)$.\end{proof}
Suppose {\it ii.} holds. We suppose $n_{i_k} = n$. This allows to implement more simple notation. Then we have
\begin{equation}\label{lum5}
T(\etab_n + \zetab) =  = \sum_{j< C_n} (\kappa_j\zeta_j + \theta_{nj})^2 +
J_{2n},
\end{equation}
where
\begin{equation}\label{lum6}
\begin{split}&
J_{2n}  = \sum_{j \ge C_n }\,\kappa_j^2 \zeta_j^2 + 2\sum_{j \ge C_n } \, \kappa_j\zeta_j\theta_{nj}\\& + \sum_{j \ge C_n}\,\theta_{nj}^2 = J_{21n} + 2 J_{22n}  +J_{23n}.
\end{split}
\end{equation}
We have
\begin{equation}\label{lum7}
J_{21n} = o_P(1) \quad \mbox{and}\quad J_{22n} \le J_{21n}^{1/2}\, J_{23n}^{1/2} = o_P(1).
\end{equation}
By (\ref{lum5}) - (\ref{lum7}), implementing Anderson Theorem \cite{an}, we get that, for any $0< \delta< c/2$, there holds
\begin{equation}\label{lum9}
\begin{split}&
\mathbf{P}\Bigl(\sum_{j=1}^\infty \Bigl(\kappa_j\zeta_j +  \theta_{nj}\Bigr)^2 <\, x\,\Bigr)
\le \mathbf{P}\Bigl(\sum_{j< C_n}  \Bigl(\kappa_j\zeta_j  + \theta_{nj}\,\Bigr)^2  \le x - c - o_P(1)\,\Bigr)\\&
\le \mathbf{P}\Bigl(\sum_{j< C_n}\,\kappa_j^2\zeta_j^2  \le x - c + \delta\Bigr)(1+ o(1))
\\&\le \mathbf{P}\Bigl(\sum_{j=1}^\infty \kappa_j^2 \zeta_j^2  \le x - c + 2\delta\Bigr)(1+ o(1)) \, < \mathbf{P}\Bigl(\sum_{j=1}^\infty \kappa_j^2 \zeta_j^2  \le x\Bigr),
\end{split}
\end{equation}
where last inequality follows from Proposition 7.1 in \cite{lif}.
 \end{proof}
\begin{proof}[ Proof of version of Theorem  \ref{tq3}]
Let (\ref{con2}) hold. Then we have
\begin{equation*}
n\sum_{j=1}^\infty\frac{\theta_{nj}^2}{\pi^2 j^2} \ge n\sum_{j< c_2 k_n}\frac{\theta_{nj}^2}{\pi^2 j^2} \ge c_2^{-2} n k_n^{-2}\sum_{j< c_2 k_n}\theta_{nj}^2 \asymp 1.
\end{equation*}
By (\ref{cru}), this implies sufficiency. \end{proof}

\begin{proof}[ Proof of version of Theorem  \ref{tq4}] Let (\ref{con3}) hold. Then we have
\begin{equation}\label{om202}
\begin{split}&
n\sum_{j=1}^\infty\frac{\theta_{nj}^2}{\pi^2 j^2} = n\sum_{j<c_2k_n}\frac{\theta_{nj}^2}{\pi^2 j^2} + n\sum_{j>c_2k_n}\frac{\theta_{nj}^2}{\pi^2 j^2}\\& \le  o(1) + (c_2 k_n)^{-2}
n \sum_{j > c_2 k_n} \theta_{nj}^2 \asymp o(1) +  (c_2k_n)^{-2} n^{1-2r} = O(c_2^{-2}).
\end{split}
\end{equation}
Since $c_2$  is arbitrary,  then, by (\ref{cru}), (\ref{om202}) implies sufficiency. \end{proof}

\begin{proof}[ Proof of  Theorem \ref{tom1}] Proof of {\it i} akin to  proof of {\sl i.} in Theorem  \ref{tq1}. The statement follows  from (\ref{con2}) and Lemma \ref{lom25} provided below.

\begin{lemma} \label{lom25} Let $f_n \in \mathbb{B}^s_{2\infty}(c_1)$  and $cn^{-r}\le \|f_n\| \le Cn^{-r}
$. Then, for $k_n = C_1 n^{(1-2r)/2}(1  +o(1))$  with $C_1^{2s} > 2c_1/c$, there holds
\begin{equation*}
\sum_{j=1}^{k_n} \theta_{nj}^2 > \frac{c}{2} n^{-2r}.
\end{equation*}
\end{lemma}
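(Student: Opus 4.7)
The plan is to mimic the proof of Lemma \ref{ld1} essentially verbatim, with only the scaling exponents adjusted to reflect the different relation $s=\frac{2r}{1-2r}$ (equivalently $r=\frac{s}{2+2s}$) that arises for the Cramer--von Mises functional rather than the relation $s=\frac{r}{2-4r}$ used in the quadratic-forms case. The key numerical check is that with $k_n \asymp n^{(1-2r)/2}$, one has $k_n^{2s}\asymp n^{2r}$, which is precisely the exponent needed to convert the Besov tail bound into an $n^{-2r}$ tail bound.

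First I would invoke the defining inequality of $\mathbb{\bar B}^s_{2\infty}(c_1)$, namely
\begin{equation*}
\sup_{\lambda>0}\lambda^{2s}\sum_{j>\lambda}\theta_{nj}^2\le c_1,
\end{equation*}
and specialize to $\lambda=k_n=C_1 n^{(1-2r)/2}(1+o(1))$ to obtain
\begin{equation*}
\sum_{j>k_n}\theta_{nj}^2\le c_1 k_n^{-2s}=c_1 C_1^{-2s}\,n^{-(1-2r)s}(1+o(1)).
\end{equation*}
Next, using $s=\frac{2r}{1-2r}$, verify $(1-2r)s=2r$, so the bound becomes
\begin{equation*}
\sum_{j>k_n}\theta_{nj}^2\le c_1 C_1^{-2s}\,n^{-2r}(1+o(1)).
\end{equation*}
The hypothesis $C_1^{2s}>2c_1/c$ then yields $c_1 C_1^{-2s}<c/2$, hence
\begin{equation*}
\sum_{j>k_n}\theta_{nj}^2\le \tfrac{c}{2}\,n^{-2r}(1+o(1)).
\end{equation*}

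Finally, subtracting this tail estimate from $\|f_n\|^2\ge c\,n^{-2r}$ (read in the same generic-constant convention used in Lemma~\ref{ld1}) gives the desired lower bound
\begin{equation*}
\sum_{j=1}^{k_n}\theta_{nj}^2=\|f_n\|^2-\sum_{j>k_n}\theta_{nj}^2>\tfrac{c}{2}\,n^{-2r}.
\end{equation*}

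There is no serious obstacle: the argument is a one-line application of the Besov tail bound combined with the algebraic identity $(1-2r)s=2r$. The only point that requires any care is the bookkeeping of the constants $c$, $C$, $c_1$, $C_1$ — in particular, making sure the threshold $C_1^{2s}>2c_1/c$ genuinely enforces $c_1 C_1^{-2s}<c/2$, so that the tail is strictly dominated by half of the total $\mathbb{L}_2$-mass. The rest mirrors the proof of Lemma~\ref{ld1} word for word, with $l_n=C_1 n^{r/s}$ replaced by $k_n=C_1 n^{(1-2r)/2}$ (the two prescriptions coincide for the respective values of $s$ in each section).
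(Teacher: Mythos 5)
Your proposal is correct and follows exactly the route the paper intends: the paper omits this proof, stating only that it is ``akin to the proof of Lemma \ref{ld1}'', and your argument is precisely that proof with the exponent bookkeeping $k_n^{2s}\asymp n^{(1-2r)s}=n^{2r}$ for $s=\frac{2r}{1-2r}$ carried out correctly, including the role of the strict inequality $C_1^{2s}>2c_1/c$ in absorbing the $(1+o(1))$ factor.
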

 Proof of Lemma \ref{lom25} is akin to proof of Lemma \ref{ld1} and is omitted.

 Reasoning in  proof of {\it ii.} is akin to  proof of {\sl ii.} in Theorem  \ref{tq1}. Suppose  opposite. Then there are $f = \sum_{j=1}^\infty \tau_{j}\,\phi_j  \notin \mathbb{B}^s_{2\infty}$  and a sequence $m_l, m_l \to \infty$ as $l \to \infty$, such that (\ref{u5}) holds. Define sequences $\etab_l$, $n_l$ and $\tilde f_l$ by the same way as in the proof of Theorem \ref{tq1}.

Then we have
\begin{equation*}
n_l \asymp C_l^{-1/(2r)} m_l^{s/r} = C_l^{-1/(2r)} m_l^{\frac{2}{1 - 2r}}.
\end{equation*}
Therefore we get
\begin{equation*}
m_l \asymp C_l^{(1-2r)/(4r)}n_l^{\frac{1-2r}{2}}.
\end{equation*}
Hence we get
\begin{equation}\label{omu12}
n_l \sum_{j=1}^{\infty} \frac{\eta_{lj}^2}{j^2} \le n_l m_l^{-2} \sum_{j=m_l}^{\infty} \eta_{lj}^2 \asymp n_l^{1-2r}m_l^{-2} \asymp C_l^{\frac{2r-1}{2r}}=   o(1) .
\end{equation}
By Theorem \ref{tcm},  (\ref{omu12}) implies  inconsistency of   sequence of alternatives $\tilde f_l$. \end{proof}

\begin{proof}[ Proof of Theorem \ref{tom6}] By Lemma \ref{lc1}, it suffices to prove that, for any $\varepsilon > 0$, there is $n_0(\varepsilon)$ such that, for $n > n_0(\varepsilon)$, the following inequality holds
\begin{equation}\label{pl7}
\begin{split}&
|\mathbf{P}( T^2(b(F_n(t)+F_{1n}(t) - F_0(t)) +  \sqrt{n}(F_n(t) + F_{1n}(t) - 2 F_0(t))) > x_\alpha)\\& - \mathbf{P}( T^2(b(F_{n}(t)) +  \sqrt{n}(F_{n}(t) - F_0(t))) > x_\alpha)| < \varepsilon.
\end{split}
\end{equation}
Since $T$ is a norm, by Lemma \ref{lc4},  proof of (\ref{pl7}) is reduced to  proof that, for any $\delta_1 > 0$, there hold
\begin{equation}\label{pl9}
\mathbf{P}(|T(b(F_n(t)+F_{1n}(t) - F_0(t))) - T(b(F_{n}(t)))| > \delta_1) = o(1),
\end{equation}
and there is sequence $\delta_n$, $\delta_n \to 0$ as  $n \to \infty$, such that there holds
\begin{equation}\label{pl11}
n^{1/2} |T(F_n(t)+F_{1n}(t) -2 F_0(t)) - T(F_{n}(t)- F_0(t))|  <\delta_n.
\end{equation}
Note that
\begin{equation}\label{pl13}
\begin{split}&
|T(b(F_n(t)+F_{1n}(t) - F_0(t))) - T(b(F_{n}(t)))|\\& \le T(b(F_n(t))+F_{1n}(t) - F_0(t)) - b(F_{n}(t)))
\end{split}
\end{equation}
and
\begin{equation}\label{pl14}
 |T(F_n(t)+F_{1n}(t) -2 F_0(t)) - T(F_{n}(t)- F_0(t))| \le T (F_{1n}(t)- F_0(t)).
\end{equation}
By Lemma \ref{lc2}, we have
\begin{equation}\label{pl15}
\mathbf{E}\, T^2(b(F_n(t)+F_{1n}(t) - F_0(t)) - b(F_{n}(t))) \le T^{1/4}(F_{1n} - F_0) =o(1).
\end{equation}
By (\ref{pl13}) and (\ref{pl15}), we get (\ref{pl9}).

Since sequence of alternatives $f_{1n}$ is inconsistent, we have
\begin{equation}\label{pl16}
 nT^2(F_{1n}(t)- F_0(t)) = o(1)
 \end{equation}
as $n \to \infty$. By (\ref{pl14}) and (\ref{pl16}),  we get (\ref{pl11}). \end{proof}

Theorem \ref{tcm}, G1  and  B reduce  proof of Theorem \ref{tom4}  to the analysis of sums $\sum_{ck_n < j < Ck_n} \theta_{nj}^2$ with $C > c$. Such an analysis  has been provided in details in subsection \ref{subsec9.4} with another parameters $r$ and $s$. We omit  proof of Theorem \ref{tom4}.

\end{document}